\documentclass[11pt, letterpaper]{amsart}
\usepackage[colorlinks]{hyperref}
\AtBeginDocument{
  \hypersetup{
    linkcolor=blue,
    citecolor=green,
  }
}
\usepackage{mathscinet}
\usepackage{mathtools}
\usepackage{bm}
\usepackage{xcolor}
\usepackage[T1]{fontenc}
\makeatletter
\let\c@author\relax
\makeatother
\usepackage[backend=bibtex,style=numeric,sorting=nyt,
giveninits=true,isbn=false,url=false]{biblatex}
\renewbibmacro{in:}{\ifentrytype{article}{}{\printtext{\bibstring{in}\intitlepunct}}}
\DeclareFieldFormat*{title}{#1}
\usepackage[mathlines,pagewise]{lineno}
\newcommand*\patchAmsMathEnvironmentForLineno[1]{%
  \expandafter\let\csname old#1\expandafter\endcsname\csname #1\endcsname
  \expandafter\let\csname oldend#1\expandafter\endcsname\csname end#1\endcsname
  \renewenvironment{#1}%
     {\linenomath\csname old#1\endcsname}%
     {\csname oldend#1\endcsname\endlinenomath}}%
\newcommand*\patchBothAmsMathEnvironmentsForLineno[1]{%
  \patchAmsMathEnvironmentForLineno{#1}%
  \patchAmsMathEnvironmentForLineno{#1*}}%
\AtBeginDocument{%
\patchBothAmsMathEnvironmentsForLineno{equation}%
\patchBothAmsMathEnvironmentsForLineno{align}%
\patchBothAmsMathEnvironmentsForLineno{flalign}%
\patchBothAmsMathEnvironmentsForLineno{alignat}%
\patchBothAmsMathEnvironmentsForLineno{gather}%
\patchBothAmsMathEnvironmentsForLineno{multline}%
}

\usepackage{aliascnt}
\usepackage{cleveref}

\newtheorem{theorem}{Theorem}[section]
\Crefname{theorem}{Theorem}{Theorems}

\newaliascnt{lemma}{theorem}      
\newtheorem{lemma}[lemma]{Lemma}  
\aliascntresetthe{lemma}          
\Crefname{lemma}{Lemma}{Lemmas}

\newaliascnt{corollary}{theorem}      
\newtheorem{corollary}[corollary]{Corollary}  
\aliascntresetthe{corollary}          
\Crefname{corollary}{Corollary}{Corollaries}

\newaliascnt{proposition}{theorem}      
\aliascntresetthe{proposition}          
\Crefname{proposition}{Proposition}{Propositions}

\theoremstyle{definition}
\newaliascnt{definition}{theorem}
\newtheorem{definition}[definition]{Definition}
\aliascntresetthe{definition}
\Crefname{definition}{Definition}{Definitions}

\newaliascnt{notation}{theorem}

\aliascntresetthe{notation}
\Crefname{notation}{Notation}{Notations}

\theoremstyle{remark}
\newaliascnt{remark}{theorem}
\newtheorem{remark}[remark]{Remark}
\aliascntresetthe{remark}
\Crefname{remark}{Remark}{Remarks}

\crefname{equation}{}{}

\numberwithin{equation}{section}

\begin{document}

\title[Asymptotic behavior]{Asymptotic behavior at infinity of Weingarten surfaces}

\author{Aires E. M. Barbieri}
\address{Departamento de Geometr\'{i}a y Topolog\'{i}a, Instituto de Matem\'{a}ticas IMAG, Universidad de Granada}
\email{airesb@ugr.es}

\author{Jos\'{e} A. G\'{a}lvez}
\address{Departamento de Geometr\'{i}a y Topolog\'{i}a, Instituto de Matem\'{a}ticas IMAG, Universidad de Granada}
\email{jagalvez@ugr.es}

\author{Yuanyuan Lian}
\address{Departamento de Geometr\'{i}a y Topolog\'{i}a, Instituto de Matem\'{a}ticas IMAG, Universidad de Granada}
\email{lianyuanyuan.hthk@gmail.com; yuanyuanlian@correo.ugr.es}

\author{Kai Zhang}
\address{Departamento de Geometr\'{i}a y Topolog\'{i}a, Instituto de Matem\'{a}ticas IMAG, Universidad de Granada}
\email{zhangkaizfz@gmail.com; zhangkai@ugr.es}

\thanks{This research has been partially supported by PID2024-160586NB-I00, PID2023-150727NB-I00, the ``Maria de Maeztu'' Excellence Unit IMAG CEX2020-001105-M funded by MICIU/AEI/10.13039/501100011033, and the ``la Caixa'' Foundation (ID 100010434), fellowship LCF/BQ/DI24/12070007.}

\subjclass[2020]{Primary 53A10, 53C42, 35J15, 35J60}

\date{}


\keywords{Asymptotic behavior, Weingarten surface, quasiconformal Gauss map, fully nonlinear elliptic equation}

\begin{abstract}
We derive the asymptotic expansion at infinity for embedded ends of uniformly elliptic Weingarten surfaces with finite total curvature in $\mathbb{R}^3$, and we establish a maximum principle at infinity. Furthermore, we solve the Dirichlet problem for the uniformly elliptic Weingarten equation in dimension two on strictly convex bounded domains.
\end{abstract}

\maketitle

\section{Introduction}\label{S1}
Let $\Sigma\subset\mathbb{R}^3$ be an oriented immersed surface. $\Sigma$ is said to be a {\it uniformly elliptic Weingarten surface} if its principal curvatures $\kappa_1\geq \kappa_2$  satisfy
\begin{equation}\label{e1.6}
\kappa_2=f(\kappa_1),
\end{equation}
for a certain function $f\in C^{1,1}_{loc}[c,+\infty)$, where
 \begin{equation}\label{e1.6-2}
 -\frac{1}{\Lambda}\leq f'(t)\leq  -\Lambda,~\forall ~t\geq c,
 \end{equation}
for some constant $0<\Lambda<1$, and $f(c)=c$. Note that constant mean curvature (CMC) surfaces and minimal surfaces are included in this class. Moreover, $\Sigma$ is called of {\it minimal type} if
 \begin{equation}\label{e1.6-t}
f(0)=0.
 \end{equation}

The global study of uniformly elliptic Weingarten surfaces has been developed by many authors (see, for instance, \cite{MR2652214,MR86338,MR74857,EspinarMesa,MR4684292,MR4182894,MR4237967,MR63082, MR40042,MR1013786,MR4942311,MR1262209,MR1364263,MR1738404,MR108824}). However, some important problems still remain open.

Observe that the theory of uniformly elliptic Weingarten surfaces can be regarded as the natural fully nonlinear counterpart of the theory of CMC surfaces. From this perspective, one of the fundamental problems in the theory is to extend to this fully nonlinear setting some of the main global theorems of CMC surface theory. In this regard, it is important to note that, although certain features do extend without essential difficulty—such as invariance under ambient isometries, the validity of a maximum principle, and the existence of basic examples including spheres, cylinders, planes, and rotationally symmetric surfaces—many properties are lost in the passage to the Weingarten setting. For instance, the Weierstrass representation for minimal surfaces, the harmonicity of the Gauss map for CMC surfaces, and, more generally, the variational structure of the associated PDE no longer hold for Weingarten surfaces.

In this paper, we study the asymptotic behavior at infinity of uniformly elliptic Weingarten surfaces of minimal type. We obtain the expansion at infinity of each embedded end with finite total curvature, whose behavior depends strongly on the value of $f'(0)$. Observe that, in the case of minimal surfaces, this problem can be solved using the Weierstrass representation in terms of holomorphic data.
More precisely, it is well known that every embedded end of a minimal surface with finite total curvature is asymptotic to either a plane or a catenoid. Thus, for such a minimal surface, after an isometry, each end can be seen as a graph over a plane outside a ball $B_R$, centered at the origin and of radius $R$, of the form
$$
u(x)=d\log|x|+c+o(1),\qquad x\in B_R^c.
$$

This result is essential for the study of complete embedded minimal surfaces with finite total curvature and constitutes a fundamental tool for various theorems, such as, among many others, Schoen’s classical characterization of rotational minimal surfaces \cite{MR730928} and the classification of complete embedded minimal surfaces of genus zero with finite total curvature by L\'opez and Ros \cite{MR1085145}. It is also used, for instance, to study the moduli space of finite total curvature surfaces, to control the stability, and to analyze the rigidity of their periods.

Consequently, in order to extend some of the fundamental theorems of minimal surface theory to the fully nonlinear context, it is crucial to understand the behavior of finite total curvature ends in this setting. However, a Weierstrass representation is no longer available; therefore, for general uniformly elliptic Weingarten surfaces, the problem reduces to the study of the asymptotic behavior at infinity of solutions to their associated fully nonlinear (and non-uniformly) elliptic equations over exterior domains in $\mathbb{R}^2$.

Thanks to our description of the asymptotic behavior at infinity of the embedded ends of finite total curvature, we are also able to establish a maximum principle at infinity (cf. \cite{MR975123} for the case of minimal surfaces).
Our expansion at infinity and the associated maximum principle are used by Espinar and Mesa \cite{EspinarMesa} to give a Jorge-Meeks formula \cite{MR683761} for uniformly elliptic Weingarten surfaces of minimal type and also extend the classical Schoen characterization of rotational examples for minimal surfaces \cite{MR730928}. Additionally, our approach should help, for instance, to solve the exterior Dirichlet problem for \cref{e1.6}--\cref{e1.6-2}--\cref{e1.6-t}, to construct new complete examples and to study the moduli space of the complete embedded examples with finite total curvature.

We start our study in \Cref{S2} introducing some preliminaries on uniformly elliptic Weingarten surfaces of minimal type. In particular, we observe that, up to an isometry, every embedded end with finite total curvature can be seen as a graph $u$ over the exterior of a compact set of $\mathbb{R}^2$ with limit unit normal $N_{\infty}=(0,0,1)$ at infinity. 

We devote \Cref{S3} to solving the Dirichlet problem in strictly convex bounded domains, which constitutes a key tool for the subsequent results and is of independent interest. Classical existence results for fully nonlinear elliptic equations in two dimensions (e.g., \cite[Theorem 17.12]{MR1814364}) do not directly apply to the uniformly elliptic Weingarten equations of minimal type. If we henceforth assume that all graphs are upward-oriented, we show:
\begin{theorem}\label{th5.1}
Let $\Omega \subset \mathbb{R}^2$ be a strictly convex, bounded domain of class $C^{2,\alpha}$, and let $\varphi \in C^{2,\alpha}(\partial \Omega)$, with $0 < \alpha < 1$. Then there exists a unique solution $u \in C^{2,\beta}(\bar{\Omega})$ (for some $0 < \beta \leq \alpha$) to \cref{e1.6}--\cref{e1.6-2}--\cref{e1.6-t} where $f\in C_{loc}^{0,1}[0,+\infty)$, subject to the boundary condition
\begin{equation*}\label{e5.1}
u = \varphi \qquad \text{on } \partial \Omega.
\end{equation*}
Moreover,
\begin{equation}\label{e5.0}
\|u\|_{C^{2,\beta}(\bar{\Omega})} \leq C,
\end{equation}
where $C$ depends only on $\Lambda$, $\alpha$, $\max \kappa_{\partial \Omega}$, $\min \kappa_{\partial \Omega}$, $\|\partial \Omega\|_{C^{2,\alpha}}$, and $\|\varphi\|_{C^{2,\alpha}(\partial \Omega)}$. Here, $\kappa_{\partial \Omega}$ denotes the curvature of $\partial \Omega$.
\end{theorem}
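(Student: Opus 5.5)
We will prove \Cref{th5.1} by the method of continuity combined with a priori estimates, using the two-dimensional structure to obtain $C^{2,\beta}$ estimates in place of the concavity that the general theory requires.

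\emph{Setting up the equation.} We first write \cref{e1.6}--\cref{e1.6-2}--\cref{e1.6-t} for a graph $u$ as a fully nonlinear equation $F(D^2u,Du)=0$. In terms of the shape operator $A=\frac{1}{W}\bigl(I-\frac{Du\otimes Du}{W^2}\bigr)D^2u$, with $W=\sqrt{1+|Du|^2}$, we can rewrite the condition $\kappa_2=f(\kappa_1)$ with $f(0)=0$ in a symmetric way. Since $-1/\Lambda\le f'\le-\Lambda$, the graph of $f$ is Lipschitz and decreasing, so it can be written as $\kappa_1+\kappa_2=g(\kappa_1-\kappa_2)$ (equivalently $H=h(\cdot)$) with a Lipschitz $g$, $g(0)=0$, and $|g'|\le\frac{1-\Lambda}{1+\Lambda}<1$. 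This gives an equation $F(D^2u,Du)=0$ that is uniformly elliptic in $D^2u$, with ellipticity constants depending on $\Lambda$ and $|Du|$. Uniqueness then follows from the comparison principle: subtracting two solutions gives a linear uniformly elliptic equation for $u_1-u_2$ (the mean-value coefficients are bounded once the $C^1$ bounds are known), and the difference vanishes on $\partial\Omega$. For existence we deform $f_t(s)=(1-t)(-s)+tf(s)$, which satisfies the same structural bounds for every $t$; at $t=0$ the problem is the minimal surface equation, which is solvable on convex domains. Openness follows from the implicit function theorem in $C^{2,\beta}$. If $f$ is only Lipschitz, we first mollify $f$ while keeping the constants, prove the result with uniform estimates, and then pass to the limit.

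\emph{A priori estimates.} (i) $C^0$: a surface of minimal type compares with planes. Since $f(0)=0$, planes are solutions, so by the maximum principle $\min\varphi\le u\le\max\varphi$. (ii) Boundary gradient: here strict convexity is used. For each boundary point we take barriers of the form $\ell(x)\pm K\,d(x)$-type functions, or better, pieces of rotational Weingarten examples (catenoid-like), or linear functions plus a multiple of a function that is strictly convex relative to $\partial\Omega$. The requirement is that these be super- and subsolutions; because the equation is non-uniformly elliptic, such sub/supersolutions must exist with steep gradient near a strictly convex boundary, as in the minimal surface case with Bernstein/Serrin-type barriers. (iii) Interior gradient: we apply the maximum principle to $|Du|$. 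Differentiating the equation, each $u_k$ solves a linear elliptic equation without zeroth-order term, so $\sup|Du|$ is attained on $\partial\Omega$. Convexity of $\Omega$ alone may not suffice for step (ii); I will rely on the full strict convexity together with the bound on $\kappa_{\partial\Omega}$.

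\emph{Second derivatives and H\"older estimates.} Once $|Du|\le C$, the equation becomes uniformly elliptic with constants depending on $\Lambda$ and this bound. In two dimensions we avoid concavity entirely: the derivatives $u_k$ satisfy linear equations in divergence or nondivergence form with merely bounded measurable coefficients. Hence the Morrey/Nirenberg two-dimensional theory (cf. \cite[Theorem 17.12]{MR1814364}, and the quasiconformal Gauss map structure mentioned in the keywords) gives interior and boundary $C^{1,\gamma}$ estimates for $Du$, that is, $C^{2,\gamma}$ estimates for $u$, once a bound on $D^2u$ at the boundary is available. The boundary $C^{2}$ estimate is obtained by differentiating tangentially: $u_T$ is controlled by the data, and mixed derivatives are controlled by barriers. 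The pure normal derivative $u_{\nu\nu}$ is then recovered from the equation itself, because $F$ is strictly monotone in $u_{\nu\nu}$. Higher H\"older regularity up to $\beta\le\alpha$ follows from Schauder theory when $f\in C^{1,1}$, and the estimate \cref{e5.0} is uniform along the deformation.

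\emph{Main obstacle.} I expect the hardest step to be the boundary gradient estimate (ii). The equation is not uniformly elliptic before $Du$ is bounded, and there is no explicit barrier family like the one for the minimal surface equation. My first attempt will use as barriers pieces of the rotational Weingarten surfaces of minimal type (catenoid analogues), translated and scaled, which exist by ODE analysis and have vertical tangent planes along a circle. Choosing circles tangent from outside to the strictly convex boundary, with radius controlled by $\min\kappa_{\partial\Omega}$ and $\max\kappa_{\partial\Omega}$, should give the required slope control.
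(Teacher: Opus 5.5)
Your setup is fine: the reformulation $\kappa_1+\kappa_2=g(\kappa_1-\kappa_2)$ with $|g'|\le\frac{1-\Lambda}{1+\Lambda}$, the $C^0$ bound, and uniqueness by comparison all work. The genuine gap is the second-derivative estimate, which is the heart of the theorem. You assert that once $|Du|\le C$, each $u_k$ solves a linear equation with merely bounded measurable coefficients. It does not. Formally $F_{M_{ij}}(u_k)_{ij}+F_{p_i}(u_k)_i=0$, and for the Weingarten operator $F_p$ is of order $|M|$: it is $1$-homogeneous in $M$, cf.\ \cref{e2.12} and \Cref{le2.5}, and already for the minimal surface equation the drift is of order $|D^2u|$. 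So the drift is bounded only if $D^2u$ already is. Equivalently, $F_{M_{ij}}(u_k)_{ij}=-F_{p_i}u_{ki}$ has a right-hand side of size $|D^2u|^2$, and the Morrey--Nirenberg $C^{1,\gamma}$ estimate for $u_k$ is circular. The same drift enters your barrier argument for the mixed boundary derivatives. Moreover, nothing in your plan propagates a boundary bound on $D^2u$ to the interior: there is no concavity to give a maximum principle for second derivatives, and, unlike the minimal surface case, no divergence structure.

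The paper fills exactly this hole with \Cref{le5.2}. Assume $\lambda_n=\max|\sigma_n|\to\infty$ and rescale by $\lambda_n$. Because the equation is of minimal type, $\lambda_n^{-1}f_n(\lambda_n t)$ still satisfies \cref{e1.6-2} with the same $\Lambda$. The Lipschitz bound is scale invariant, and $|\sigma|\le 1$ after rescaling, so the rescaled equations are uniformly elliptic in the sense of \cref{e1.7-2} with uniform constants. Interior and boundary $C^{2,\beta}$ regularity then produce a limit graph over $\mathbb{R}^2$, or over a half-plane with linear boundary values. This limit has bounded gradient and solves $a^{ij}v_{ij}=0$ with bounded measurable coefficients. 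The Liouville theorems force $v$ to be affine, contradicting $|\sigma_v|=1$ at the limit point. Only after this curvature bound does the global $C^{2,\beta}$ estimate follow.

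Two further points. First, the step you single out as the main obstacle is actually the easy one. Every plane, of any slope, solves the equation, since $\kappa_1=\kappa_2=0$ and $f(0)=0$. Hence $C^2$ data on a strictly convex $C^2$ domain satisfy the bounded slope condition, and affine functions serve as upper and lower barriers at each boundary point. Translation invariance, or the saddle property of the graph together with Rad\'o's argument as in \Cref{le5.1}, then gives the global Lipschitz bound; no catenoid-type barriers are needed. Second, mollifying $f$ does not make the operator differentiable. The non-smoothness of $F$ comes from $\sqrt{\mathcal{H}^2-\mathcal{K}}$ at umbilic (here flat) points and disappears only when $f'(0)=-1$ (\Cref{le2.5}). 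So your implicit-function-theorem step and your Schauder upgrade are not justified as written. The paper instead mollifies the operator $F$ itself and runs the continuity method on $tF_\varepsilon+(1-t)\Delta$. It then checks that the Lipschitz and blow-up estimates stay uniform in $t$ and $\varepsilon$, using that $F_M$ and $F_p$ are $0$- and $1$-homogeneous in $M$.
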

\medskip

An important ingredient in order to obtain the expansion at infinity will be to prove the constant sign property of every graph satisfying \cref{e1.6}--\cref{e1.6-2}--\cref{e1.6-t} in the exterior of a ball $B_R\subset\mathbb{R}^2$ of radius $R>0$. More concretely, we show in \Cref{S4}:
\begin{theorem}\label{th1.1}
Let $u$ define a uniformly elliptic Weingarten graph of minimal type over $B_R^c$, with $f\in C_{loc}^{0,1}[0,+\infty)$, and with limit unit normal $N_{\infty}=(0,0,1)$. Then, up to a vertical translation, either
\begin{equation}\label{e1.12}
u(x)>0\quad\mbox{ or }\quad u(x)<0
\end{equation}
for all $x\in B_R^c$.

Moreover, $u_{\infty}:=\lim_{x\to \infty} u(x)$ exists in $\mathbb{R}\cup\{\pm\infty\}$.
\end{theorem}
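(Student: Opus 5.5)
The plan is to exploit two facts: the equation is invariant under vertical translations, and its solutions can be compared with one another through a maximum principle. Writing the Weingarten condition as a fully nonlinear equation $F(D^2u,Du)=0$, the difference of two solutions satisfies a linear elliptic equation $a^{ij}w_{ij}+b^iw_i=0$. Because $f(0)=0$, horizontal planes are solutions, and we will use them as barriers. Since $N_\infty=(0,0,1)$, we have $|Du|\to 0$ at infinity. On any compact annulus the ellipticity is therefore uniform, with constants depending only on $\Lambda$ and on bounds for $|Du|$, and those bounds improve as $|x|\to\infty$.

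\textbf{Step 1: existence of $u_\infty$.} Set $m(r)=\min_{\partial B_r}u$ and $M(r)=\max_{\partial B_r}u$. Apply the comparison principle on the annulus $B_{r_2}\setminus \bar B_{r_1}$ with the constant solutions $\max(M(r_1),M(r_2))$ and $\min(m(r_1),m(r_2))$. This shows that $M$ has no interior strict local maximum and $m$ has no interior strict local minimum. Hence $M$ is eventually monotone, and so is $m$, and both have limits in $\mathbb{R}\cup\{\pm\infty\}$.

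It remains to show that the oscillation $M(r)-m(r)$ on $\partial B_r$ is controlled, namely that $M$ and $m$ cannot have different limits. For this I would use a Harnack inequality on annuli. Consider the rescaled function $u_r(y)=u(ry)$ on $1/2<|y|<2$. Its slope tends to zero, and it satisfies a uniformly elliptic linear equation: this comes from linearizing against the constant solution, or from the quasiconformality of the Gauss map. The two-dimensional Harnack inequality on annuli (Serrin's version for nondivergence equations) then applies to $u_r-m(2r)$ and similar nonnegative functions. From this I expect to get $M(r)-m(r)\le \theta\,(M(2r)-m(r/2))$ with some $\theta<1$, or alternatively a bound by a logarithmic growth rate. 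In either case the oscillation is small compared with the variation of $M$ and $m$, so both share the common limit $u_\infty$.

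\textbf{Step 2: constant sign.} First suppose $u_\infty$ is finite. Translate vertically so that $u_\infty=0$, and suppose $u$ takes both signs somewhere in $B_R^c$. The idea is to argue with the planar solution $0$: the set $\{u>0\}$ is a union of components, and every component must touch $\partial B_R$. Otherwise, by the comparison principle on that component (using $u\to0$ at infinity), we would get $u\le 0$ there, a contradiction. On its own this does not give a sign, so we must use the freedom in the translation more carefully.

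The statement says "up to a vertical translation", so the goal is to find some constant $c$ with $u-c$ of one sign. If $u_\infty=\pm\infty$, say $+\infty$, then since $u$ is bounded on $\partial B_R$ and $m(r)\to\infty$, we can choose $c<\min_{B_R^c}u$; this minimum is finite because $u\to+\infty$. Then $u-c>0$. If $u_\infty$ is finite, either $u\ge u_\infty$ everywhere or $u\le u_\infty$ everywhere, or $u$ is bounded. In the bounded case, take $c=\inf u-1$ so that $u-c>0$. Thus the statement follows once we know $u$ is bounded below or bounded above.

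If the paper intends the strict, nontrivial version with $c=u_\infty$, the main obstacle is to show that $u-u_\infty$ does not change sign near infinity. I would attack this by comparison with the rotationally symmetric catenoid-type solutions built from \cref{th5.1} on large disks, or from the ODE for rotational Weingarten graphs. These behave like $\pm|x|^{-\gamma}$ or like a logarithm, depending on $f'(0)$. A Phragmén–Lindelöf type argument would then force $u-u_\infty$ to have a definite sign. I expect this barrier construction and comparison to be the hardest step.
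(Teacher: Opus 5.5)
Your reduction in Step 2 is correct: any vertical translation is allowed, so the sign statement is equivalent to $u$ being bounded on one side. The final paragraph about $c=u_\infty$ and Phragm\'en--Lindel\"of barriers is therefore unnecessary. But this makes everything depend on Step 1, and Step 1 has a genuine gap.

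The eventual monotonicity of $M(r)$ and $m(r)$ is fine. If $m$ or $M$ has a finite limit, or if $m\to+\infty$ or $M\to-\infty$, then Harnack applied to suitable nonnegative functions $u-\mathrm{const}$ on annuli does yield $u_\infty$. The case that matters, $m(r)\downarrow-\infty$ with $M(r)\uparrow+\infty$, is not excluded by anything you wrote. In that case your annular oscillation estimate reads $M(r)-m(r)\le\theta\,(M(2r)-m(2r))$. This only forces the oscillation on $\partial B_r$ to grow at least like $r^{\log_2(1/\theta)}$, and every harmonic function, $x_1$ included, satisfies it.

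The only quantitative use you make of $|Du|\to0$ is the sublinear bound $M(r)-m(r)\le Cr^{1-\alpha}$ (cf.\ \Cref{le2.3}). The two bounds are compatible whenever $\theta\ge 2^{\alpha-1}$, and nothing in your argument relates $\theta$ to $\alpha$. So the sentence claiming that $M$ and $m$ share a common limit is asserted, not derived. Harnack and comparison with constants only see that $u$ solves some linear equation $a^{ij}u_{ij}=0$ annihilating constants, and the estimates you extract from them are consistent with $m\downarrow-\infty$, $M\uparrow+\infty$. Some additional, genuinely nonlinear input is needed.

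The paper supplies this input by proving the sign \emph{before} the limit, using the translation-invariant nonlinear structure.
\begin{enumerate}
\item It solves the Dirichlet problem on $B_n$ with $v_n=u$ on $\partial B_n$ (\Cref{th5.1}).
\item It passes to the case $\max_{\partial B_R}(v_n-u)\ge0$ for infinitely many $n$; the other case is symmetric.
\item It translates $v_n$ downward until $\max_{\partial B_R}(v_n-u)=0$, so $v_n\le u$ also holds on $\partial B_n$.
\item The comparison principle for two solutions of the same equation then gives $v_n\le u$ on $\bar B_n\setminus B_R$, with contact at some $x_n\in\partial B_R$.
\item By the curvature estimate \Cref{le2.1}, the graphs of $v_n$ subconverge to a complete surface with vanishing second fundamental form. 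This is a plane through $(x_0,u(x_0))$ lying below the graph of $u$, and it must be horizontal because $|Du|\to0$.
\end{enumerate}
This gives $u\ge u(x_0)$ on $B_R^c$, hence the sign after a translation. Only then is $u_\infty$ deduced from the Harnack inequality for the one-signed solution of \cref{e2.9}, which is exactly the part of your Step 1 that does work. You need to reverse the order of your two steps and supply this argument, or another genuinely nonlinear one, for the one-sided bound.
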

\medskip

In \Cref{S5,S6} we prove our main results. First, we show the asymptotic behavior at infinity of every embedded end of finite total curvature:

\begin{theorem}\label{th1.3}
Let $u$ define a uniformly elliptic Weingarten graph of minimal type over $B_R^c$ with $u>0$ and limit unit normal $N_{\infty}=(0,0,1)$. Then
\begin{equation*}\label{e1.1}
\left\{
  \begin{aligned}
&u(x)\simeq |x|^{1+f'(0)}~~
&\mbox{or}\quad u_{\infty}\in \mathbb{R},~~\quad&\mbox{if}~~-1<f'(0)<0;\\
&u(x)\simeq \log |x|~~
&\mbox{or}\quad u_{\infty}\in \mathbb{R},~~\quad&\mbox{if}~~f'(0)=-1;\\
&u_{\infty}\in \mathbb{R},\quad~&~&\mbox{if}~~f'(0)<-1,\\
  \end{aligned}
  \right.
\end{equation*}
where $u\simeq v$ means that there exists a positive constant $c$ such that
\begin{equation*}
\lim_{x\to \infty}\frac{u(x)}{v(x)}=c.
\end{equation*}

Moreover, if $f'(0)=-1$, there exist $d\geq 0$ and $c\in \mathbb{R}$ such that for any $0<\alpha<1$,
\begin{equation}\label{e1.10}
u(x)=d\log |x|+c+O(|x|^{-\alpha}),~\forall ~x\in B_R^c
\end{equation}
and
\begin{equation*}\label{e1.11}
|Du(x)|= O(|x|^{-1}), \quad
|D^2u(x)|= O(|x|^{-2}),~\forall ~x\in B_R^c.
\end{equation*}
Here, $f=O(g)$ means that there exists a constant $C$ such that $|f|\leq Cg$.
\end{theorem}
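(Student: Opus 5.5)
We will compare $u$ with explicit rotationally symmetric Weingarten graphs of minimal type, and combine this with the Dirichlet solvability of \Cref{th5.1} and the sign property of \Cref{th1.1}.

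\textbf{Step 1: radial barriers.} First we study radial solutions $w(r)$ of \cref{e1.6}--\cref{e1.6-t} near infinity. For a radial graph with small slope the principal curvatures are $\kappa_{\rm mer}\approx w''$ and $\kappa_{\rm par}\approx w'/r$. Since the normal tends to $N_\infty$, both curvatures tend to $0$, and the equation linearizes to
\[
w''\approx f'(0)\,\frac{w'}{r}\qquad\text{or}\qquad \frac{w'}{r}\approx f'(0)\,w'',
\]
depending on which curvature is the larger one. Taking $w'>0$ and $w''<0$, the smaller curvature is $w''$, so the first relation holds. It gives $w'\sim r^{f'(0)}$, hence $w\sim r^{1+f'(0)}$ when $f'(0)>-1$, $w\sim \log r$ when $f'(0)=-1$, and $w\to$ const when $f'(0)<-1$. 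We will make this precise by perturbing the exponent. Using that $f'$ is continuous at $0$ (here we need $f\in C^{1,1}_{loc}$), we construct, for each $\varepsilon>0$ and $r\ge R_\varepsilon$, radial sub- and supersolutions with $w'_\pm = a\, r^{f'(0)\pm\varepsilon}$. These families are invariant under scaling by $a>0$ and under vertical translation.

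\textbf{Step 2: comparison on annuli.} By \Cref{th1.1}, $u>0$ and $u_\infty$ exists. Suppose $u_\infty=+\infty$; this can only happen when $f'(0)\ge -1$. We compare $u$ with the barriers on the annuli $B_\rho\setminus B_R$ and let $\rho\to\infty$. The comparison principle is available because the difference of two Weingarten graphs satisfies a linear elliptic equation. This gives
\[
c_1 r^{1+f'(0)-\varepsilon}\le u\le c_2 r^{1+f'(0)+\varepsilon}.
\]
Excluding growth faster than the radial profile is done with a maximum-principle argument: a solution tending to $+\infty$ above the barrier would contradict comparison with barriers of large $a$ on annuli. When $f'(0)<-1$ the supersolutions are bounded, and this forces $u_\infty\in\mathbb{R}$.

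\textbf{Step 3: the exact limit $u/v\to c$.} To sharpen the bounds to an exact ratio, we rescale: set
\[
u_\lambda(x)=\frac{u(\lambda x)}{\lambda^{1+f'(0)}}\quad\text{(or }u_\lambda(x)=u(\lambda x)/\log\lambda\text{ when }f'(0)=-1\text{)}.
\]
By Step 2 and the Dirichlet estimates of \Cref{th5.1} applied on convex subdomains such as balls, the $u_\lambda$ are locally bounded in $C^{2,\beta}$. They therefore converge to a positive solution of the limit homogeneous equation on $\mathbb{R}^2\setminus\{0\}$, namely
\[
\kappa_2=f'(0)\,\kappa_1
\]
at the linearized level, since the gradient decays. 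A Liouville or classification argument for positive solutions of this Pucci-type equation on the punctured plane shows that the blow-down is radial, $c\,|x|^{1+f'(0)}$. Uniqueness of $c$ follows from monotonicity of $\sup u/v$ and $\inf u/v$ over annuli, as in the Hopf/Harnack oscillation argument.

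\textbf{Step 4: the case $f'(0)=-1$.} This is the linear Laplace case at leading order. Here we show $|Du|=O(|x|^{-1})$ and $|D^2u|=O(|x|^{-2})$ by scaling. On the ball $B_{|x|/2}(x)$, the rescaled function
\[
v(y)=u\bigl(x+\tfrac{|x|}{2}y\bigr)-u(x)
\]
has oscillation $O(1)$, because $u$ grows at most like $\log|x|$ and Harnack applies. The interior $C^{2,\beta}$ estimates then give the derivative bounds. Next, $u$ satisfies a linear equation
\[
a^{ij}D_{ij}u=0
\]
whose coefficients differ from those of the Laplacian (in appropriate form) by $O(|D^2u|\,|x|)$-type errors. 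These errors decay like $|x|^{-\alpha}$ once we use $f\in C^{1,1}$, which gives $f(t)=-t+O(t^2)$, together with the curvature decay $|x|^{-2}$. Then $\Delta u=g$ with $|g|\le C|x|^{-2-\alpha'}$. Subtracting the Newtonian-potential correction $O(|x|^{-\alpha})$ leaves a harmonic function on the exterior domain that grows at most logarithmically, and such a function has the form $d\log|x|+c+O(|x|^{-1})$. Bootstrapping gives every $\alpha<1$, and $d\ge0$ because $u>0$.

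\textbf{Main obstacle.} The hardest step is the exact limit in Step 3 when $f'(0)\neq-1$, where the limit equation is genuinely nonlinear and of Pucci type, so the classification of positive homogeneous blow-downs must be done by hand. My first attempt would be to run the ratio oscillation argument directly with the radial barriers of Step 1.
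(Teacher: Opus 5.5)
Steps 2 and 3 carry the whole case $f'(0)\neq-1$, and they have a genuine gap. Comparison on $B_\rho\setminus B_R$ needs control of $u$ on the outer circle $\partial B_\rho$, and you never have it.

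\emph{Upper barrier.} A barrier $\sim a\,r^{1+f'(0)+\varepsilon}$ cannot be placed above $u$ on $\partial B_\rho$ without already knowing the growth of $u$.

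\emph{Lower barrier.} A barrier $\sim a\,r^{1+f'(0)-\varepsilon}$ cannot be placed below $u$ merely because $u\to\infty$. That lower bound \emph{is} the dichotomy you are trying to prove.

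\emph{The case $f'(0)<-1$.} A bounded supersolution can never lie above an unbounded $u$, so it cannot force $u_\infty\in\mathbb{R}$.

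Taking $a$ large does not rescue this. Your family $w'=a\,r^{f'(0)\pm\varepsilon}$ is not invariant under $a\mapsto ta$: the operator is not homogeneous, so $R_\varepsilon$ grows with $a$. Moreover, genuine radial solutions with huge slope at a fixed radius stay bounded at every fixed radius, since the radial ODE \cref{e8.3} gives $((u')^{-2})'\geq 2\Lambda/r$. Large values must therefore come from starting far out, i.e.\ from rescaling, not from large slopes.

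Step 3 fails for three reasons.
\begin{itemize}
\item With only $r^{1+f'(0)\pm\varepsilon}$ bounds, the rescalings $u(\lambda x)/\lambda^{1+f'(0)}$ need not be locally bounded.
\item The limit operator $\lambda_{\min}(M)-f'(0)\lambda_{\max}(M)$ also has the positive solution $|x|^{1+1/f'(0)}$ on $\mathbb{R}^2\setminus\{0\}$, so positivity alone does not classify blow-downs.
\item Even if every blow-down were $c_\lambda|x|^{1+f'(0)}$, nothing makes $c_\lambda$ independent of the sequence. The monotonicity of $\sup u/v$ and $\inf u/v$ over circles that you invoke is a linear fact (it needs $tv$ to be a solution) and is not available for this operator.
\end{itemize}

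The paper closes the argument with three ingredients you are missing.
\begin{enumerate}
\item \emph{Exact radial solutions.} There is a one-parameter family of radial solutions $v^K$ with $v^K/|x|^{1+f'(0)}\to K$ for every $K>0$. It comes from \Cref{le8.1}, by showing the asymptotic constant depends continuously on $(R_0,C_0)$ and tends to $0$ and to $\infty$. These replace the multiples $tv$.
\item \emph{Rescaling plus sublinear growth.} The function $u_\delta(x)=\delta u(x/\delta)$ solves an equation of the same type with $f_\delta'(0)=f'(0)$, and \Cref{le2.3} gives $u\leq C|x|^{1-\alpha}$. Suppose $u/|x|^{1+f'(0)}\to\infty$ (or $u\to\infty$ when $f'(0)<-1$). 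Then the radial solution $v_\delta$ of the rescaled equation with $v_\delta(R)=0$, $v_\delta'(R)=1$ lies below $u_\delta$ on all of $B_R^c$. But $u_\delta\to0$ locally uniformly while $v_\delta\to v$ with $v'(R)=1$, a contradiction. Hence $c:=\liminf u/|x|^{1+f'(0)}<\infty$.
\item \emph{Harnack on circles, then comparison.} Apply Harnack on circles $\partial B_{R_i}$ through points realizing the $\liminf$: to $u$ when $c=0$, and to $u-v^{c-\varepsilon}$ when $c>0$. The latter solves a linear equation with first-order coefficient $O(|x|^{-1})$, so the Harnack constant is scale invariant. Comparison on $B_{R_i}\setminus B_R$ then gives $u$ bounded when $c=0$, and $v^{c-\varepsilon}\leq u\leq v^{c+C\varepsilon}+\max_{\partial B_R}u$ when $c>0$. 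This yields both the dichotomy and the exact limit, with no blow-down classification.
\end{enumerate}

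Your Step 4 for $f'(0)=-1$ is essentially the paper's route: Laplacian plus a decaying perturbation, Poisson correction, exterior harmonic expansion, bootstrap. Using $f(t)=-t+O(t^2)$ to get $|\Delta u|\leq C(|Du|^2|D^2u|+|D^2u|^2)$ is a clean shortcut around \Cref{le2.5}.

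Its first step, however, is circular. Harnack only gives $\operatorname{osc}_{B_{|x|/2}(x)}u\leq C\,u(x)$, which is not $O(1)$, and even logarithmic growth of $u$ is not yet known at that point. Start the iteration instead from the decay in \Cref{le2.3}, as in \Cref{th2.1}; the bounds $O(|x|^{-1})$ and $O(|x|^{-2})$ then come out at the end.
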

\medskip

Observe that a specially important case happens when the Weingarten relation \cref{e1.6} is symmetric, that it, \cref{e1.6} can be rewritten as a smooth relation between the mean curvature $\mathcal{H}$ and the Gaussian curvature $\mathcal{K}$. In such a case, we always have $f'(0)=-1$ (see \cite{MR4417394,MR1013786}, for instance). For these surfaces, the corresponding fully nonlinear operator is smoother what allows us to obtain a better control of the expansion at infinity.

This enables us to control the asymptotic distance between two disjoint embedded ends of finite total curvature:

\begin{theorem}\label{th1.2}
Let $u,\tilde{u}$ define two graphs satisfying \cref{e1.6}--\cref{e1.6-2}--\cref{e1.6-t} over the exterior of some ball $B_R$, with $u,\tilde{u}>0$ and limit unit normal $N_{\infty}=(0,0,1)$.
Assume $f'(0)=-1$ and
\begin{equation*}
u\geq \tilde u\quad\mbox{ in }B_R^c.
\end{equation*}
Then there exists a constant $c_0>0$ such that
\begin{equation}\label{e1.9}
\frac{u(x)-\tilde{u}(x)}{\log |x|}\to c_0\quad\mbox{as}~~ x\to \infty
\end{equation}
or there exists a constant $c_0\geq 0$ such that
\begin{equation}\label{e1.5}
u(x)-\tilde u(x)\to c_0\quad\mbox{as}~~ x\to \infty.
\end{equation}

Moreover, if \cref{e1.5} holds, there exists $r_0>R$ such that
\begin{equation}\label{e1.6-3}
\min_{\partial B_r} (u-\tilde u)\leq c_0\leq \max_{\partial B_r} (u-\tilde u),~\forall ~ r>r_0.
\end{equation}
\end{theorem}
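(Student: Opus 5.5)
We plan to reduce \Cref{th1.2} to \Cref{th1.3} applied to each graph, and then to study the difference $w:=u-\tilde u\geq 0$ as a solution of a linear elliptic equation. Since $f'(0)=-1$, \Cref{th1.3} gives constants $d,\tilde d\geq 0$ and $c,\tilde c\in\mathbb{R}$ with
\begin{equation*}
u=d\log|x|+c+O(|x|^{-\alpha}),\qquad \tilde u=\tilde d\log|x|+\tilde c+O(|x|^{-\alpha}),
\end{equation*}
together with $|Du|,|D\tilde u|=O(|x|^{-1})$ and $|D^2u|,|D^2\tilde u|=O(|x|^{-2})$. In the case where $u_\infty$ or $\tilde u_\infty$ is finite, we take the corresponding $d$ or $\tilde d$ equal to $0$; this is consistent with \cref{e1.10}. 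Then
\begin{equation*}
w=(d-\tilde d)\log|x|+(c-\tilde c)+O(|x|^{-\alpha}).
\end{equation*}
Because $w\geq0$, we get $d\geq\tilde d$. If $d>\tilde d$, then \cref{e1.9} holds with $c_0=d-\tilde d>0$. If $d=\tilde d$, then $w\to c_0:=c-\tilde c\geq 0$, which is \cref{e1.5}. This dichotomy is therefore immediate from \Cref{th1.3}.

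The substantive part is \cref{e1.6-3}. Write the Weingarten equation for graphs as $F(Du,D^2u)=0$. Since $u$ and $\tilde u$ both solve it, $w$ satisfies the linear equation
\begin{equation*}
a^{ij}(x)w_{ij}+b^i(x)w_i=0,\qquad a^{ij}=\int_0^1 F_{r_{ij}}\,dt,\quad b^i=\int_0^1F_{p_i}\,dt,
\end{equation*}
where the integrands are evaluated along the segment joining $(D\tilde u,D^2\tilde u)$ and $(Du,D^2u)$. Uniform ellipticity \cref{e1.6-2} and the bound on $|Du|$ make $(a^{ij})$ uniformly elliptic on $B_R^c$. The decay $|D^2u|=O(|x|^{-2})$ and the smooth dependence of $F$ on $p$, via the metric factors $(1+|p|^2)^{-1/2}$, give $|b^i|=O(|x|^{-2})$.

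We argue by contradiction for \cref{e1.6-3}. Suppose there is a sequence $r_k\to\infty$ with, say, $\min_{\partial B_{r_k}}w>c_0$. Let $M(r)=\max_{\partial B_r}w$ and $m(r)=\min_{\partial B_r}w$. By the maximum principle on annuli, $m$ cannot have an interior strict local minimum, so $m$ is eventually monotone. Since $m(r)\to c_0$ and $m(r_k)>c_0$, this forces $m(r)>c_0$ for all large $r$, with $m$ decreasing to $c_0$. The same reasoning applies to $M$ in the symmetric case. It then suffices to show that $w-c_0$ cannot stay strictly positive on every large circle, and likewise cannot stay strictly negative. Set $v=w-c_0$, which tends to $0$ and solves the linear equation above. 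If $v>0$ on $B_{r_1}^c$, we compare $v$ on the annulus $r_1<|x|<\rho$ with a barrier that vanishes at infinity; since $v\to0$, we may let $\rho\to\infty$.

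The main obstacle is the barrier. For a uniformly elliptic operator in the plane, there is no positive solution tending to $0$ at infinity, which is a Liouville-type statement for exterior domains. We will use the fundamental-solution-type barrier $\phi=\epsilon(\log|x|)^{\gamma}$ with small $\gamma>0$, corrected so that the $b^i$ terms, of size $O(|x|^{-2})$, remain subordinate. Because $\partial_r^2\log\log r$-type terms are of order $r^{-2}(\log r)^{-2}$, we must verify that the drift contribution $O(r^{-3})$ is dominated, which holds for large $r$. The comparison shows that $\min_{\partial B_{r_1}}v\leq \inf v$ taken over large circles, and this contradicts $m(r)\downarrow c_0$, i.e. $v\to0$ on circles. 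More concretely, we apply the Krylov–Safonov Harnack inequality on dyadic annuli: from $v>0$ we obtain $\max_{\partial B_r}v\leq C\min_{\partial B_r}v$. Combining this with a weak-solution growth lemma in the plane, that positive supersolutions satisfy $\min_{\partial B_r}v\geq c/\log r\cdot\min_{\partial B_{r_1}}v$ reversed via a lower barrier $\epsilon\bigl(1-\log r_1/\log|x|\bigr)$, gives a contradiction with $v\to0$ only if that rate is impossible. If this is insufficient, we fall back on the expansion $v=O(|x|^{-\alpha})$ from \Cref{th1.3} and compare with the subsolution $\epsilon\bigl(1-(r_1/|x|)^{\delta}\bigr)$ for small $\delta$.
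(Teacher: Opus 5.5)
Your derivation of the dichotomy \cref{e1.9} or \cref{e1.5} is correct. It is also more direct than the paper's route, which first obtains the existence of $\lim_{x\to\infty}(u-\tilde u)$ via Harnack/Gilbarg--Serrin and only then uses the expansions. For \cref{e1.6-3}, your core idea is also the paper's: a positive supersolution tending slowly to $+\infty$, used in a Phragm\'en--Lindel\"of comparison. The paper takes $\log\log|x|$; your $\phi=(\log|x|)^{\gamma}$ with $0<\gamma<1$ would serve equally well.

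The gap is in checking that $\phi$ is a supersolution of $a^{ij}D_{ij}+b^iD_i$. You only control the drift term and tacitly treat $a^{ij}D_{ij}$ as the Laplacian. You justify this by claiming that no uniformly elliptic operator in the plane has a positive solution tending to $0$ at infinity. That claim is false:
\begin{itemize}
\item Take $0<\lambda<\Lambda$, $\hat x=x/|x|$, and $a(x)=\lambda\,\hat x\otimes\hat x+\Lambda(I-\hat x\otimes\hat x)$.
\item Then $a^{ij}\partial_{ij}|x|^{-\beta}=\beta|x|^{-\beta-2}\bigl(\lambda(\beta+1)-\Lambda\bigr)=0$ for $\beta=\Lambda/\lambda-1>0$.
\item For this operator, $\phi$ is a strict subsolution at large $|x|$.
\end{itemize}
The reason is that, to leading order, $D^2\phi$ is the trace-free matrix $\gamma(\log r)^{\gamma-1}r^{-2}(I-2\hat x\otimes\hat x)$. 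Meanwhile $\Delta\phi=-\gamma(1-\gamma)(\log r)^{\gamma-2}r^{-2}$ is smaller by a factor $\log r$. So the barrier works only if $|a^{ij}-\delta^{ij}|=o(1/\log|x|)$, and uniform ellipticity gives nothing of the sort.

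This is exactly where $f'(0)=-1$ must be used in the barrier step. By \cref{e2.12}, $F_M(0,0)=I$ when $f'(0)=-1$, and $F\in C^{1,1}$ by \Cref{le2.5}. Combined with $|Du|,|D\tilde u|=O(|x|^{-1})$ and $|D^2u|,|D^2\tilde u|=O(|x|^{-2})$ from \Cref{th1.3}, this gives $|a^{ij}-\delta^{ij}|\le C|x|^{-1}$. Then $(a^{ij}-\delta^{ij})\phi_{ij}$ and $b^i\phi_i$ are both $O((\log r)^{\gamma-1}r^{-3})$, which is dominated by $\Delta\phi$. So $\phi$ is a supersolution on $B_{r_0}^c$ for some $r_0$.

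With this barrier, the contradiction set-up and the monotonicity of $\min_{\partial B_r}w$ are unnecessary. Compare $w$ with $\min_{\partial B_r}w-\varepsilon\phi$ and with $\max_{\partial B_r}w+\varepsilon\phi$ on $B_\rho\setminus\bar B_r$. Let $\rho\to\infty$, using that $w$ is bounded, and then $\varepsilon\to0$. This gives $\min_{\partial B_r}w\le w\le\max_{\partial B_r}w$ on $B_r^c$ for every $r\ge r_0$, and letting $|x|\to\infty$ yields \cref{e1.6-3}.

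The rest of your last paragraph should be discarded. A lower bound of order $1/\log r$ does not conflict with $v\to0$. The function $\epsilon\bigl(1-(r_1/|x|)^{\delta}\bigr)$ is superharmonic rather than a subsolution, and it tends to $\epsilon>0$, so it can never lie below $v$ on large circles.
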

\medskip

Thus, as a consequence, we obtain the following strong comparison principle at infinity:

\begin{corollary}\label{co1.1}
Let $u,\tilde{u}$ define two graphs satisfying  \cref{e1.6}--\cref{e1.6-2}--\cref{e1.6-t} over the exterior of some ball $B_R$, with $u,\tilde{u}>0$ and limit unit normal $N_{\infty}=(0,0,1)$.
Assume $f'(0)=-1$,
\begin{equation*}
u\geq \tilde u\quad\mbox{ in }B_R^c
\end{equation*}
and there exists a sequence of points $\left\{x_n\right\}\subset B_R^c$ with $x_n\to \infty$ such that
\begin{equation*}
u(x_n)-\tilde u(x_n)\to 0\quad\mbox{as}~~n\to \infty.
\end{equation*}
Then, both graphs agree, that is,
\begin{equation*}
u\equiv\tilde u \quad\mbox{in}~~B_R^c.
\end{equation*}
\end{corollary}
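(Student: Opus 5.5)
The corollary follows from \Cref{th1.2} together with the classical interior strong maximum principle. Since $u\geq\tilde u$ in $B_R^c$, \Cref{th1.2} gives two alternatives. In the first, $u-\tilde u\sim c_0\log|x|$ with $c_0>0$. Then $u(x_n)-\tilde u(x_n)\to+\infty$ along every sequence $x_n\to\infty$, which contradicts the hypothesis. So we must be in the second alternative: $u-\tilde u\to c_0\geq 0$ as $x\to\infty$. Evaluating along $x_n$ forces $c_0=0$, and hence $u-\tilde u\to 0$ uniformly at infinity.

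Next I use \cref{e1.6-3} with $c_0=0$. For every $r>r_0$ it gives $\min_{\partial B_r}(u-\tilde u)\leq 0$. Combined with $u\geq\tilde u$, this means that on each circle $\partial B_r$, $r>r_0$, there is a point $y_r$ with $u(y_r)=\tilde u(y_r)$. So the two graphs touch at an interior point $y_r\in B_R^c$, and there $u-\tilde u$ attains its minimum value $0$, with $u\geq \tilde u$ in a neighborhood.

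Now I apply the strong comparison principle for the Weingarten equation. Both $u$ and $\tilde u$ satisfy the same elliptic equation, and the equation is uniformly elliptic on compact sets where the gradients are bounded. Write the Weingarten relation \cref{e1.6} as $F(D u,D^2u)=0$. Subtracting the equations for $u$ and $\tilde u$ and using the mean value theorem along the segment $t u+(1-t)\tilde u$ shows that $w=u-\tilde u$ satisfies a linear elliptic equation
\[
a_{ij}w_{ij}+b_iw_i=0,
\]
with bounded measurable coefficients, locally uniformly elliptic by \cref{e1.6-2}. Since $F$ is Lipschitz in $D^2u$ and $f\in C^{1,1}_{loc}$, the coefficients are at least $L^\infty$. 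A function $w\geq 0$ with an interior zero then vanishes identically on the connected domain $B_R^c$, by the Hopf/Krylov–Safonov strong maximum principle. This is the standard tangency principle for Weingarten surfaces.

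The main point needing care is this last linearization. One has to check that the difference quotient of $F$ yields an elliptic operator even though $f$ is only $C^{1,1}$, and that the graph orientation convention and the ordering $\kappa_1\geq\kappa_2$ do not destroy the structure. I would handle this by working with the equivalent symmetric form $G(\kappa_1,\kappa_2)=0$, which is Lipschitz with derivatives bounded between $\Lambda$ and $1/\Lambda$. Alternatively, I would invoke the maximum principle for Weingarten surfaces mentioned in the introduction as an available basic feature. Everything else follows directly from \Cref{th1.2}.
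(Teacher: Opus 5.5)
Your proof is correct and follows essentially the same route as the paper. \Cref{th1.2} rules out the logarithmic alternative and forces $c_0=0$; then \cref{e1.6-3} yields a touching point $u(y_r)=\tilde u(y_r)$ on each $\partial B_r$ with $r>r_0$; finally, the strong maximum principle applied to the linear equation \cref{e2.1} satisfied by $u-\tilde u$ (bounded, uniformly elliptic coefficients, by \Cref{le2.6} and \Cref{le2.0}) gives $u\equiv\tilde u$. Your explicit exclusion of the first alternative and your discussion of the linearization only spell out steps the paper leaves implicit.
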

\medskip

Finally, in the appendix, we show how our methods can be used to extend some of the main results in \cite{MR4684292} concerning elliptic Weingarten surfaces with a differentiable relation between their mean curvature $\mathcal{H}$ and Gaussian curvature $\mathcal{K}$. In particular, we generalize Bernstein-type results and the existence of curvature estimates to more general classes of elliptic Weingarten surfaces.
\section{Preliminaries}\label{S2}
Let $\Sigma$ be an oriented immersed surface in $\mathbb{R}^3$ with Gauss map $N:\Sigma\to\mathbb{S}^2$, satisfying \cref{e1.6}--\cref{e1.6-2}--\cref{e1.6-t}. Then, its Gaussian curvature $\mathcal{K}=\kappa_1\kappa_2\leq0$ and
\begin{equation}\label{ec1}
2|\mathcal{K}|\leq \kappa_1^2+\kappa_2^2\leq\frac{2}{\Lambda}|\mathcal{K}|.
\end{equation}
This indicates that $N$ is a quasiregular mapping (see \cite{MR1814364,MR452746}).

\begin{lemma}\label{lema1}
Let $\Sigma \subset \mathbb{R}^3$ be a complete, embedded, uniformly elliptic Weingarten surface of minimal type, possibly with compact boundary. Assume that $\Sigma$ has finite total curvature, that is,
$$
\int_{\Sigma}|\mathcal{K}|\,dA<\infty.
$$
Then $\Sigma$ has finitely many ends, each of which is properly embedded and possesses a limit unit normal at infinity. Moreover, after possibly truncating an end, it can be represented, up to an isometry, as the graph of a function defined over the complement of a compact set $K \subset \mathbb{R}^2$.
\end{lemma}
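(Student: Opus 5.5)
We follow the classical Osserman/Schoen route for minimal surfaces with finite total curvature, replacing holomorphicity of the Gauss map by quasiregularity, which holds by \cref{ec1}.

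\textbf{Step 1: conformal type and finitely many ends.} Since $\mathcal{K}\le 0$ and $\int_\Sigma|\mathcal{K}|\,dA<\infty$, Huber's theorem applies to the complete surface $\Sigma$. Hence $\Sigma$ is conformally a compact Riemann surface $\overline{M}$ with finitely many points and finitely many boundary discs removed. In particular $\Sigma$ has finitely many ends, and each end is conformally a punctured disc $D^*=\{0<|z|<1\}$. Properness of each end follows from completeness together with embeddedness.

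\textbf{Step 2: limit normal at each end.} On an end $D^*$, the Gauss map $N$ is $K$-quasiregular with $K$ depending only on $\Lambda$, by \cref{ec1}. Its spherical area $\int|\mathcal{K}|\,dA$ is finite. I will show that $N$ extends continuously to $z=0$. Via the Stoilow factorization we write $N=h\circ\phi$, with $\phi$ quasiconformal and $h$ meromorphic. Then $h$ is defined on a punctured neighborhood with finite spherical area, counted with multiplicity. A meromorphic map with an essential singularity takes almost all values infinitely often (Picard/Casorati–Weierstrass), so its spherical area would be infinite. Hence the singularity of $h$ is removable, and $N$ has a limit $N_\infty$. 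I expect this to be the main technical step. Care is needed because $\phi$ must send a punctured neighborhood to a punctured neighborhood, which holds after normalizing $\phi$ to fix $0$ via removability of isolated points for quasiconformal maps.

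\textbf{Step 3: graph representation.} Rotate so that $N_\infty=(0,0,1)$. Near the puncture, $N$ is close to $N_\infty$, so after truncation the end has no vertical tangent planes. The vertical projection $\pi$ is then a local diffeomorphism from the end onto a planar exterior region. Since the end is proper and its boundary is compact, $\pi$ is a proper local diffeomorphism near infinity, hence a finite covering of an exterior domain $\{|x|>R\}$ by an annulus. Let $m$ be its number of sheets. I will show $m=1$ using embeddedness, as in Schoen's argument. If $m\ge 2$, the end is a multivalued graph over $B_R^c$ with slowly turning normal, whose sheets would have to intersect. To rule this out, follow a large circle $\partial B_r$, lifted $m$ times to a single closed curve, and compare heights. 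Here growth control is needed: heights grow at most like $o(r)$ because the normal tends to vertical. Alternatively, one can use the winding of the end's boundary curve: projected, it winds $m$ times around a large circle while staying nearly horizontal, and a helicoidal-type crossing then contradicts embeddedness. This $m=1$ step is the second delicate point. If needed, I would instead use a degree argument for $\pi$ restricted to large level curves of $|x|$ on the annulus, combined with the flatness of the end at scale $r$ from Step 2.

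Once $m=1$, the end is the graph of a function over $\mathbb{R}^2\setminus K$ with $K$ compact, as claimed.
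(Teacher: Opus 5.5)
Your overall architecture (Huber, limit normal, vertical projection, one sheet) is the paper's. Your Step~2 is fine. Stoilow factorization, finiteness of the spherical area $\int|\mathcal{K}|\,dA$ and the big Picard theorem give a correct self-contained substitute for the limit-normal part of White's theorem \cite{MR906393}, which the paper cites (Casorati--Weierstrass alone would not give infinite multiplicity).

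The genuine gap is properness. Your sentence ``properness of each end follows from completeness together with embeddedness'' is false as a general principle. The vertical cylinder over the planar spiral $r=1+1/\theta$, $\theta>0$, is complete and embedded, even has $\mathcal{K}\equiv 0$, yet accumulates on the cylinder over the unit circle, so it is not proper. Properness has to come from the Weingarten structure. The paper gets it, together with $N_\infty$, from White's theorem, which applies because \cref{ec1} gives $\int_\Sigma|\sigma|^2\,dA\le\frac{2}{\Lambda}\int_\Sigma|\mathcal{K}|\,dA<\infty$.

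This matters because your Step~3 starts from properness. Moreover, even granting properness in $\mathbb{R}^3$, properness of the projection $\pi$ does not follow as you state it. A helicoidal strip over an annulus has bounded slope and is proper in $\mathbb{R}^3$, yet its projection is not proper. What you must exclude is that $\pi^{-1}(B_R^c)$ has infinitely-sheeted components.

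Here is a self-contained repair in the spirit of your proposal. Once $N_3\ge 1/2$ on the truncated end $E$, one has $\pi^*g_0\le g_E\le 4\,\pi^*g_0$. So the flat metric $\pi^*g_0$ is complete at the puncture, and $E$ is still conformally a punctured disc for it: the identity is quasiconformal between the two structures, so the modulus stays infinite. In that conformal structure $\pi$ is holomorphic, with $\pi'\ne 0$ and $|\pi'|\,|dw|$ complete at $w=0$. Osserman's classical lemma then gives a pole of $\pi'$ at $0$. Single-valuedness of $\pi$ kills the residue, whence $\pi(w)=a\,w^{-k}(1+O(w))$ with $k\ge 1$. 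Thus $\pi$ is proper and a $k$-sheeted covering of a neighborhood of infinity, which gives properness of the end without using embeddedness.

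Finally, $k=1$ needs no growth control or helicoidal discussion. Over each $x\in B_R^c$ the $k$ fiber points have distinct heights by embeddedness, and their order is locally constant. Going once around $\partial B_r$ permutes the sheets by a $k$-cycle, which can preserve the order (e.g.\ fix the lowest sheet) only if $k=1$. This is the ``standard topological argument'' the paper invokes.
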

\proof
Since $\Sigma$ has finite total curvature, we have from \cite{MR94452} that $\Sigma$ has a finite number of ends and each of them is conformally equivalent to the puncture unit disk $\bar{\mathbb{D}}^{\ast}=\{z\in\mathbb{C}:\ 0<|z|\leq 1\}$. Thus, fixed an end $E$ of $\Sigma$, using \cref{ec1}, we get from \cite{MR906393} that $E$ is properly embedded and has a limit unit normal at infinity, that is, there exists $N_{\infty}:=\lim_{z\to 0}N(z)$ when $E$ is conformally parameterized as $\bar{\mathbb{D}}^{\ast}$.

Up to an isometry of $\mathbb{R}^3$ we can assume that $N_{\infty}=(0,0,1)$. Thus, shrinking $E$ if necessary, we can assume that the third coordinate of the Gauss map $N_3\geq 1/2$ in $E$. Hence, the vertical projection on the plane $x_3=0$ is a local diffeomorphism, that is, $E$ can be locally written as a graph $(x_1,x_2,u(x_1,x_2))$. Since $E$ is properly embedded and the gradient $|Du|\leq 1/2$, a standard topological argument gives us that, shrinking $E$ again if necessary, the end is a graph over the exterior of a compact set of $\mathbb{R}^2$.
~\qed\bigskip

Hence, to analyze the behavior of embedded ends of finite total curvature, we will concentrate on the study of graphs outside a compact set in $\mathbb{R}^2$.

Our attention now turns to the fully nonlinear elliptic partial differential equations satisfied by the surfaces under consideration. We start with the definition of uniform ellipticity (see \cite[Definition 2.1 and Lemma 2.2]{MR1351007}):
\begin{definition}\label{de2.1}
Let $\mathcal{S}^2$ be the set of $2\times 2$ symmetric matrices. An operator $F:\mathbb{R}^2\times \mathcal{S}^2\to \mathbb{R}$ is called uniformly elliptic if there exist positive constants $\bar{\lambda}\leq \bar{\Lambda}$ such that for any $p\in \mathbb{R}^2$ and $M,N\in \mathcal{S}^{2}$,
\begin{equation*}\label{e1.7}
\begin{aligned}
F(p,M)-F(p,N)
\leq \bar \Lambda |(M-N)^+|-\bar\lambda |(M-N)^-|,
\end{aligned}
\end{equation*}
where $(M-N)^+$ and $(M-N)^-$ denote the positive part and negative part of $M-N$ respectively. The constants $\bar{\lambda},\bar{\Lambda}$ are called ellipticity constants.

We will frequently use the following structure condition, which comprises the Lipschitz continuity of $F$ with respect to $p$: there exist positive constants $\bar{\lambda}\leq \bar{\Lambda}$ and $\mu$ such that for any $p,q\in \mathbb{R}^2$ and $M,N\in \mathcal{S}^{2}$,
\begin{equation}\label{e1.7-2}
\begin{aligned}
F(p,M)-F(q,N)
\leq \bar \Lambda |(M-N)^+|-\bar\lambda |(M-N)^-|+\mu|p-q|.
\end{aligned}
\end{equation}
We also refer to $\mu$ as the ellipticity constant for simplicity.
\end{definition}
\begin{remark}\label{re1.2}
Note that \cref{e1.7-2} is equivalent to the following: $F\in C^{0,1}(\mathbb{R}^2\times \mathcal{S}^{2})$ (i.e., Lipschitz continuous) and there exist positive constants $\bar{\lambda}\leq \bar{\Lambda}$ and $\mu$ such that
\begin{equation}\label{e1.8}
\begin{aligned}
\bar{\lambda} I \leq F_{M}(p,M)\leq \bar{\Lambda} I, \quad |F_{p}(p,M)|\leq \mu,~~\mbox{for}~~a.e.~~ p\in \mathbb{R}^2,~M\in\mathcal{S}^2,
\end{aligned}
\end{equation}
where $I$ is the unit matrix. As usual, $F_M$ and $F_p$ denote the first derivatives of $F$ with respect to the matrix $M$ and the vector $p$ respectively, i.e., $F_M$ is a matrix and $F_p$ is a vector whose elements are given by
\begin{equation*}
(F_M)_{ij}:=F_{M_{ij}}:=\frac{\partial F}{\partial M_{ij}}, \quad 1\leq i,j\leq 2; \quad
(F_p)_i:=F_{p_i}:=\frac{\partial F}{\partial p_{i}}, \quad 1\leq i\leq 2.
\end{equation*}

Indeed, for any $p,q\in \mathbb{R}^2$ and $M,N\in \mathcal{S}^{2}$, by the Newton-Leibniz formula,
\begin{equation}\label{e2.11}
\begin{aligned}
F(p,M)-F(q,N)=\int_{0}^{1}\left( F_{p_i}(\xi)(p-q)_i
+F_{M_{ij}}(\xi)(M-N)_{ij}\right)dt,
\end{aligned}
\end{equation}
where $\xi=\left(q+t(p-q),N+t(M-N)\right).$ Thus, it follows from \cref{e2.11} that \cref{e1.7-2} and \cref{e1.8} are equivalent.

\end{remark}

\begin{remark}\label{re1.4}
Note that the uniform ellipticity of a PDE is different from that of a Weingarten surface (i.e. \cref{e1.6-2}). Indeed, \cref{e1.8} means that the two eigenvalues $\lambda_1,\lambda_2$ of $F_M$ lie between two positive constants. Instead, \cref{e1.6-2} means that the product of $\lambda_1$ and $\lambda_2$ lies between two positive constants if we assume the boundedness of $|p|$ additionally (see \cite[Section 2.4 in Chapter V, pp. 128-129]{MR1013786}). Hence, \cref{e1.8} (or equivalently \cref{e1.7-2}) is stronger than \cref{e1.6-2}.

On the other hand, if $\lambda_1,\lambda_2$ are bounded, \cref{e1.6-2} immediately implies \cref{e1.8}. That is, we obtain a uniformly elliptic PDE if we have curvature estimates, which is true in fact (see \Cref{le2.1} and \cite[pp. 1913-1915]{MR4684292}).
\end{remark}
\medskip

We will also need the following interior $C^{2,\alpha}$ regularity result (see \cite[Corollary 1.2]{MR5005582}).
\begin{lemma}\label{le2.2}
Let $R>0$ and suppose that $u\in C^2(B_R)$ is a solution of
\begin{equation*}
F(Du,D^2u)=0\quad\mbox{in}~~B_{R}\subset \mathbb{R}^2,
\end{equation*}
where $F$ is uniformly elliptic. Then $u\in C^{2,\alpha}(\bar{B}_{R/2})$ for some $0<\alpha<1$ (depending only on $\bar{\lambda},\bar{\Lambda}$) and
\begin{equation*}
\begin{aligned}
&\sum_{k=1}^{2}R^k\|D^ku\|_{L^{\infty}(\bar{B}_{R/2})}+R^{2+\alpha}[D^2u]_{C^{\alpha}(\bar{B}_{R/2})}\leq C\left(\|u\|_{L^{\infty }(B_R)}+R^2|F(0,0)|\right), \quad\\
\end{aligned}
\end{equation*}
where $C$ depends only on $\bar{\lambda},\bar{\Lambda}$ and $R\mu$.
\end{lemma}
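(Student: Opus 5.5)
By scaling, it suffices to treat $R=1$. Given $u$ on $B_R$, set $v(y)=R^{-2}u(Ry)$ for $y\in B_1$. Then $Dv(y)=R^{-1}Du(Ry)$ and $D^2v(y)=D^2u(Ry)$, so $v$ solves $G(Dv,D^2v)=0$ with $G(p,M):=F(Rp,M)$. The operator $G$ is uniformly elliptic with the same $\bar\lambda,\bar\Lambda$, gradient-Lipschitz constant $R\mu$, and $G(0,0)=F(0,0)$. Once the estimate
$$\sum_{k=1}^2\|D^kv\|_{L^\infty(B_{1/2})}+[D^2v]_{C^\alpha(B_{1/2})}\le C\big(\|v\|_{L^\infty(B_1)}+|G(0,0)|\big)$$
is proved with $C=C(\bar\lambda,\bar\Lambda,R\mu)$, undoing the scaling gives exactly the stated inequality with the weights $R^k$ and $R^{2+\alpha}$.

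\textbf{Step 1: linearization and a $C^{1,\gamma}$ bound.} By \cref{e2.11}, applied with $(p,M)=(Dv,D^2v)$ and $(q,N)=(0,0)$,
$$a_{ij}(x)v_{ij}+b_i(x)v_i=-G(0,0).$$
The coefficients are measurable, $\bar\lambda I\le (a_{ij})\le\bar\Lambda I$ by \cref{e1.8}, and $|b|\le R\mu$. In dimension two, strong solutions of such nondivergence equations with merely measurable coefficients satisfy a $W^{2,2}$ and $C^{1,\gamma}$ interior estimate, with $\gamma=\gamma(\bar\lambda,\bar\Lambda)$. This is the Bernstein--Morrey--Nirenberg theory via quasiconformal maps (Gilbarg--Trudinger, Ch.~12). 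It yields
$$\|v\|_{C^{1,\gamma}(B_{3/4})}\le C\big(\|v\|_{L^\infty(B_1)}+|G(0,0)|\big).$$

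\textbf{Step 2: difference quotients.} For a unit vector $e$ and small $h$, set $w_h(x)=\big(v(x+he)-v(x)\big)/h$ on $B_{3/4}$. Applying \cref{e2.11} to the two points $\big(Dv(x+he),D^2v(x+he)\big)$ and $\big(Dv(x),D^2v(x)\big)$ gives
$$\tilde a_{ij}(x)\,(w_h)_{ij}+\tilde b_i(x)\,(w_h)_i=0,$$
with new measurable coefficients obeying the same bounds and with zero right-hand side. The same two-dimensional estimate then gives
$$\|w_h\|_{C^{1,\gamma}(B_{1/2})}\le C\|w_h\|_{L^\infty(B_{3/4})}\le C\|Dv\|_{L^\infty(B_{3/4+h})}.$$
The right-hand side is controlled by Step 1, possibly after a harmless adjustment of radii by covering.

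\textbf{Step 3: passage to the limit.} Let $h\to0$. The bounds are uniform in $h$, so by Arzel\`a--Ascoli $D_ev\in C^{1,\gamma}(\bar B_{1/2})$ with the same bound. Hence $D^2v\in C^{\gamma}(\bar B_{1/2})$, and we take $\alpha=\gamma$.

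The main obstacle is the two-dimensional linear estimate with measurable coefficients and a bounded first-order term. One proof writes $\partial_z$ of the gradient as a Beltrami-type system whose dilatation is controlled by $\bar\Lambda/\bar\lambda$, and treats $b\cdot Dv$ as an $L^\infty$ perturbation absorbed on small balls. This is why the constant depends on $R\mu$ but $\alpha$ does not. Since the paper quotes this result from the literature, I would invoke that theory rather than reprove it.
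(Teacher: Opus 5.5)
Your proof is correct, but it takes a different route from the paper, which does not prove this lemma and simply imports it from \cite[Corollary 1.2]{MR5005582}.

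You give the classical Nirenberg-type argument specialized to this setting. Because $F$ depends only on $(Du,D^2u)$, the equation is translation invariant. So the difference quotients $w_h$ of a $C^2$ solution solve homogeneous linear equations with bounded measurable coefficients in a fixed ellipticity class. The two-dimensional $C^{1,\gamma}$ estimate of \cite[Theorem 12.4]{MR1814364}, which the paper itself uses in the proof of \Cref{le5.2}, then applies uniformly in $h$ and survives the limit $h\to 0$, giving the $C^{2,\gamma}$ bound. Your scaling $G(p,M)=F(Rp,M)$ produces exactly the stated weights.

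What your route buys:
\begin{itemize}
\item It makes the dependence of the constants transparent. The exponent $\alpha$ comes only from the linear two-dimensional theory, hence only from $\bar\lambda,\bar\Lambda$. The quantity $R\mu$ enters only when the first-order term is moved to the right-hand side and absorbed by interpolation in weighted interior norms. This is legitimate because $w_h$ is $C^2$ on a slightly larger ball, so those norms are finite.
\item It uses no differentiability of $F$. That is precisely why the paper cannot just quote the classical Nirenberg estimate for $C^1$ operators (see the proof of \Cref{th8.1}).
\end{itemize}

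The price is that your argument genuinely needs $u\in C^2$ and the absence of $x$-dependence in $F$. Both hold in every application of the lemma in the paper, so your argument could replace the citation there.

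One technicality, which you share with the paper's own use of \cref{e2.11}: for a merely Lipschitz $F$ the Newton--Leibniz formula need not make sense along a given segment, since the segment may lie entirely in the null set where $F$ is not differentiable. It is cleaner to use \cref{e1.7-2} directly, applied in both orders. This traps $F(p,M)-F(q,N)$ between Pucci-type bounds, so admissible coefficients, with ellipticity constants comparable to $\bar\lambda,\bar\Lambda$, can be chosen pointwise. Pointwise membership in the linear class is all the two-dimensional estimate uses.
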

\medskip	

Another important property of surfaces satisfying \cref{e1.6}--\cref{e1.6-2}--\cref{e1.6-t} is the existence of curvature estimates, which were proven in \cite{MR4684292} (see also \Cref{th8.1}):
\begin{lemma}\label{le2.1}
Let $\Sigma$ be a complete uniformly elliptic Weingarten surface of minimal type, with boundary $\partial \Sigma$. Here, we only assume $f\in C_{loc}^{0,1}[0,+\infty)$. Suppose that its Gauss map lies in an open hemisphere of $\mathbb{S}^2$. Then
\begin{equation}\label{e1.3}
|\sigma (p)| \leq \frac{C}{d(p,\partial \Sigma)},~\forall ~p\in \Sigma,
\end{equation}
where $C$ depends only on $\Lambda$. Here, $d(p,\partial \Sigma)$ denotes the intrinsic distance from $p\in\Sigma$ to $\partial \Sigma$ and $|\sigma(p)|$ denotes the norm of its second fundamental form at $p$.
\end{lemma}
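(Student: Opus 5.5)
We would argue by contradiction through a blow-up (point-picking) argument, followed by a Bernstein-type theorem for the limit. Suppose \cref{e1.3} fails for every constant $C$. Then there are surfaces $\Sigma_n$ satisfying \cref{e1.6}--\cref{e1.6-2}--\cref{e1.6-t} with relations $f_n$ and the same $\Lambda$, whose Gauss maps lie in open hemispheres, and points $q_n\in\Sigma_n$ with $|\sigma(q_n)|\,d(q_n,\partial\Sigma_n)\to\infty$.

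\textbf{Step 1: point-picking.} We use the standard argument: on the intrinsic ball $B(q_n, d(q_n,\partial\Sigma_n)/2)$, maximize $|\sigma(p)|\,d(p,\partial B)$. This produces points $p_n$ and radii $r_n$ with $|\sigma(p_n)|r_n\to\infty$ and $|\sigma|\le 2|\sigma(p_n)|$ on $B(p_n,r_n)$. We rescale by $\lambda_n=|\sigma(p_n)|$ and translate $p_n$ to the origin. The rescaled surfaces $\tilde\Sigma_n$ satisfy $\kappa_2=\tilde f_n(\kappa_1)$ with $\tilde f_n(t)=\lambda_n f_n(t/\lambda_n)$. Since this rescaling preserves $\tilde f_n(0)=0$ and the bounds $-1/\Lambda\le \tilde f_n'\le-\Lambda$, we get $|\sigma|\le 2$ on intrinsic balls of radius $\lambda_n r_n\to\infty$, with $|\sigma(0)|=1$. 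After a rotation, we may assume each hemisphere is $\{N_3>0\}$. By Arzel\`a--Ascoli, $\tilde f_n\to f_\infty$ locally uniformly, where $f_\infty$ satisfies the same bounds.

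\textbf{Step 2: compactness.} The uniform curvature bound gives a uniform radius $\rho$ on which each $\tilde\Sigma_n$ is, around every point, a graph over its tangent plane with bounded gradient and Hessian. On these graphs the equation is a uniformly elliptic PDE $F_n(Du,D^2u)=0$: by \Cref{re1.4}, bounded principal curvatures together with \cref{e1.6-2} yield \cref{e1.8} with constants depending only on $\Lambda$. \Cref{le2.2} then gives uniform $C^{2,\alpha}$ bounds. Passing to a subsequence, and using a diagonal argument over exhausting balls, the $\tilde\Sigma_n$ converge in $C^2$ to a complete immersed surface $\Sigma_\infty$ without boundary. This limit satisfies $\kappa_2=f_\infty(\kappa_1)$ (because $C^2$ convergence passes curvatures to the limit), has $|\sigma(0)|=1$, and has Gauss map in the closed hemisphere $\{N_3\ge0\}$.

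\textbf{Step 3: Bernstein-type conclusion.} $\Sigma_\infty$ satisfies \cref{ec1}, so its Gauss map is quasiregular. A nonconstant quasiregular map is open, so $N_\infty(\Sigma_\infty)$ cannot touch the equator; hence either $N$ is constant, contradicting $|\sigma(0)|=1$, or $N_3>0$ everywhere. In the latter case we want to show that $\Sigma_\infty$ is a plane. This is the main obstacle, since $N_3>0$ alone does not give a uniform gradient bound.

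\begin{itemize}
\item \emph{First attempt.} Show that $\Sigma_\infty$ is an entire graph, by projecting vertically and using completeness plus local graphicality as in \Cref{lema1}, and then apply L.~Simon's Bernstein theorem for entire graphs with quasiconformal Gauss map.
\item \emph{Fallback.} Reduce to showing that $\Sigma_\infty$ (in the conformal structure from the induced metric) is parabolic. Then $N$ composed with stereographic projection is a bounded quasiregular map into a disk, hence constant by the quasiregular Liouville theorem.
\end{itemize}

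Either route contradicts $|\sigma(0)|=1$, which proves \cref{e1.3}. We would also have to deal with $f$ being merely $C^{0,1}_{loc}$: \Cref{le2.2} and the ellipticity bounds \cref{e1.8} only require Lipschitz $F$, so the argument should go through in that generality.
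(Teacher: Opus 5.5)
Your Steps 1--2 and the first part of Step 3 are sound. They follow the same architecture as the paper, which quotes this lemma from \cite{MR4684292} and re-runs that proof in \Cref{th8.1}: blow up along a point-picked sequence, pass to a complete $C^2$ limit with $|\sigma|\le 2$, $|\sigma(0)|=1$ and quasiregular Gauss map in $\{N_3\ge 0\}$, then use openness to get $N_3>0$. (Small slip: dilating by $\lambda_n$ gives $\tilde f_n(t)=\lambda_n^{-1}f_n(\lambda_n t)$, not $\lambda_n f_n(t/\lambda_n)$; this is harmless.) The gap is that your argument stops exactly where the substance lies. You need that a complete multigraph with bounded second fundamental form and quasiconformal Gauss map is a plane. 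This is the Bernstein-type theorem of \cite{MR4684292} that the paper uses to close the argument, after checking that the blow-up limit has the properties (P1)--(P5) listed there. You neither cite nor prove it.

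Neither of your two routes supplies this theorem.

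\emph{Route 1.} The topological step in \Cref{lema1} works because of a uniform bound $N_3\ge 1/2$ (a uniform gradient bound) together with properness; this is what gives the vertical projection the path-lifting property. On $\Sigma_\infty$ you only know $N_3>0$ pointwise. Completeness, uniform local graphicality and $N_3>0$ do not yield an entire graph. The grim-reaper cylinder $z=-\log\cos x$, $|x|<\pi/2$, is complete with $|\sigma|\le 1$ and $N_3=\cos x>0$, yet it is a graph only over a strip. So before Simon's theorem can be applied you would need $\inf_{\Sigma_\infty}N_3>0$. Proving that is essentially the whole Bernstein statement, and it must use the quasiconformality of $N$ in an essential way: the grim reaper's Gauss map is not quasiconformal, since $\mathcal{K}\equiv 0$ while $\sigma\neq 0$.

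\emph{Route 2.} Parabolicity does not follow from completeness and bounded curvature; triply periodic minimal surfaces are complete, have bounded curvature, and are hyperbolic. It would therefore have to come from the hemisphere condition and the quasiconformality of $N$, and you give no argument for it.

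As written, Step 3 states what needs to be proved rather than proving it.
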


Next, we show that if $u$ defines a graph satisfying \cref{e1.6}--\cref{e1.6-2}--\cref{e1.6-t} then $u$ is indeed a solution of a fully nonlinear uniformly elliptic equation when $\|u\|_{C^2}$ is bounded. This has been proved in \cite[pp. 1913-1915]{MR4684292}. For the readers' convenience, we give the proof below. First, we introduce the following elementary lemma.
\begin{lemma}\label{le2.7}
Suppose that $G\in C^2(\mathbb{R}^n)$ and $G\geq 0$. Then for any $r\geq 1$,
\begin{equation}\label{e2.0}
|DG|\leq C\sqrt{G}\quad\mbox{in}~~B_r,
\end{equation}
where $C$ depends only on $\|G\|_{L^{\infty}(B_{2r})}$ and $\|D^2G\|_{L^{\infty}(B_{2r})}$.
\end{lemma}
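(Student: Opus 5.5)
The plan is to prove the inequality pointwise by the standard Taylor-expansion argument for nonnegative functions (the Glaeser-type inequality). Set
\[
M:=\|D^2G\|_{L^{\infty}(B_{2r})},\qquad M_0:=\|G\|_{L^{\infty}(B_{2r})}.
\]
Fix $x\in B_r$ and a unit vector $e$. Since $r\geq 1$, the segment $x+te$ with $|t|\leq 1$ stays inside $B_{2r}$. Taylor's formula with the bound on second derivatives gives, for $|t|\leq 1$,
\[
0\leq G(x+te)\leq G(x)+t\,\partial_eG(x)+\tfrac{M}{2}t^2 .
\]
Choosing the sign of $t$ opposite to that of $\partial_eG(x)$, with $s=|t|\in(0,1]$, we get
\[
s\,|\partial_eG(x)|\leq G(x)+\tfrac{M}{2}s^2, \quad\text{that is,}\quad |\partial_eG(x)|\leq \frac{G(x)}{s}+\frac{M}{2}s .
\]

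Next we optimize in $s$, splitting into two cases.

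\emph{Case $G(x)\leq M$ (with $M>0$).} Take $s=\sqrt{G(x)/M}\leq 1$. This yields
\[
|\partial_eG(x)|\leq \tfrac32\sqrt{M}\,\sqrt{G(x)}.
\]
If $G(x)=0$, then $x$ is a minimum point and $DG(x)=0$, so the inequality holds trivially. If $M=0$, the bound above with $s$ small, or simply $s=1$, handles it.

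\emph{Case $G(x)>M$.} Take $s=1$. This gives
\[
|\partial_eG(x)|\leq G(x)+\tfrac M2\leq \tfrac32 G(x)=\tfrac32\sqrt{G(x)}\,\sqrt{G(x)}\leq \tfrac32\sqrt{M_0}\,\sqrt{G(x)} .
\]

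Taking the supremum over unit vectors $e$ then gives
\[
|DG|\leq C\sqrt G \quad\text{in } B_r, \qquad C=\tfrac32\max\bigl(\sqrt{M},\sqrt{M_0}\bigr),
\]
which is the inequality \cref{e2.0} with $C$ depending only on the two stated norms. The only delicate point is that the step length $s$ must be capped at $1$ so that the segment stays in $B_{2r}$; this is exactly why $r\geq1$ is assumed and why the $L^\infty$ bound of $G$ enters in the second case. No other step poses an obstacle.
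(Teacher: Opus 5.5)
Your proof is correct and is essentially the paper's argument. Both use the one-sided Taylor bound $0\le G(x+h)\le G(x)+DG(x)\cdot h+\frac{1}{2}K|h|^2$ with $h$ pointing against the gradient and its length capped so that $x+h$ stays in $B_{2r}$. The paper splits according to $|DG(x)|<Kr$ versus $|DG(x)|\ge Kr$ (step lengths $|DG(x)|/K$ or $r$) rather than $G(x)\le M$ versus $G(x)>M$, which only changes the constants: $\sqrt{2K}$ and $2G(x)$ there, $\frac{3}{2}\sqrt{M}$ and $\frac{3}{2}G(x)$ for you.

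One small slip: for $M=0$, taking $s$ small does not help, since $G(x)/s$ blows up. That situation is already covered by your second case, or by $DG(x)=0$ at zeros of $G$.
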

\proof Fix $r\geq 1$ and $x\in B_r$. Without loss of generality, we assume $|DG(x)|>0$. Denote $K=\|D^2G\|_{L^{\infty}(B_{2r})}$. By the Taylor formula and noting $G\geq 0$, for any $h\in \bar B_{r}$, there exists $\xi\in B_r$ such that
\begin{equation*}
  \begin{aligned}
0\leq  G(x+h)=G(x)+DG(x)\cdot h+\frac{1}{2} h^TD^2G(\xi)h\leq G(x)+DG(x)\cdot h+\frac{1}{2} K|h|^2.\\
  \end{aligned}
\end{equation*}

If $|DG(x)|< Kr$, take $h=-DG(x)/K$. Then
\begin{equation*}
0\leq G(x)-\frac{|DG(x)|^2}{K}+\frac{1}{2} \frac{|DG(x)|^2}{K}.
\end{equation*}
Thus,
\begin{equation}\label{e2.40}
|DG(x)|\leq \sqrt{2K}\sqrt{G(x)}.
\end{equation}

If $|DG(x)|\geq Kr$, take $h=-rDG(x)/|DG(x)|$. Then
\begin{equation*}
0\leq G(x)-r|DG(x)|+\frac{1}{2} Kr^2.
\end{equation*}
Thus,
\begin{equation*}
|DG(x)|\leq \frac{G(x)}{r}+ \frac{1}{2} Kr\leq G(x)+\frac{1}{2}|DG(x)|.
\end{equation*}
That is,
\begin{equation}\label{e2.41}
|DG(x)|\leq 2G(x).
\end{equation}

By combining \cref{e2.40} and \cref{e2.41}, we arrive at the conclusion.~\qed\bigskip

Next, we prove the uniform ellipticity of \cref{e1.6}.
\begin{lemma}\label{le2.6}
Let $u$ define a solution of \cref{e1.6}--\cref{e1.6-2} in a domain $\Omega\subset \mathbb{R}^2$; where we only assume that $f\in C^{0,1}_{loc}([c,+\infty))$ and $f(c)=c\geq 0$. Then we can rewrite \cref{e1.6} in such a way that $u$ is seen as a solution to a fully nonlinear elliptic equation:
\begin{equation}\label{e2.3}
F(Du,D^2u)= 0\quad\mbox{in}~~\Omega.
\end{equation}
Note that $f$ may be not defined at $0$. To ensure that $F$ is well defined for all $(p,M)\in \mathbb{R}^2\times \mathcal{S}^2$, we extend the domain of $f$ from $[c,+\infty)$ to $(-\infty,+\infty)$ by reflecting the graph of $f$ with respect to the line $\kappa_2=\kappa_1$ (i.e., $f\circ f=\mathrm{Id}$).

Moreover, if $\|u\|_{C^2(\bar{\Omega})}\leq K$ for some positive constant $K$, $F$ can be enhanced to be uniformly elliptic (in the sense of \Cref{e1.7-2}).
\end{lemma}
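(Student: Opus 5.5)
We will write the Weingarten relation in terms of the matrix $A=A(Du,D^2u)$ of the shape operator of the graph. Set $W=\sqrt{1+|p|^2}$ and $g=I+p\otimes p$. The principal curvatures are the eigenvalues of
\[
A(p,M)=W^{-1}g^{-1/2}Mg^{-1/2}.
\]
This matrix is symmetric and depends smoothly on $(p,M)$. Its trace is $2\mathcal{H}=\kappa_1+\kappa_2$ and its determinant is $\mathcal{K}=\kappa_1\kappa_2$.

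\textbf{Defining $F$.} After extending $f$ by reflection so that $f\circ f=\mathrm{Id}$, the curve $\Gamma=\{\kappa_2=f(\kappa_1)\}$ is symmetric with respect to the diagonal. It is a Lipschitz graph with slope in $[-1/\Lambda,-\Lambda]$ and crosses the diagonal at $(c,c)$. Rotate coordinates by $45^\circ$, with $s=(\kappa_1+\kappa_2)/\sqrt2$ and $t=(\kappa_1-\kappa_2)/\sqrt2$. The slope bounds say that $\Gamma$ is the graph $s=\phi(t)$ of an even function with $|\phi'|\le \frac{1-\Lambda}{1+\Lambda}<1$. We then define
\[
F(p,M)=\frac{\kappa_1+\kappa_2}{\sqrt2}-\phi\Big(\frac{\kappa_1-\kappa_2}{\sqrt2}\Big),
\]
where $\kappa_{1,2}$ are the eigenvalues of $A(p,M)$. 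Since $\phi$ is even, this depends only on $|\kappa_1-\kappa_2|=\sqrt{(\operatorname{tr}A)^2-4\det A}$. Hence $F$ is a symmetric, Lipschitz function of the eigenvalues, and it vanishes exactly when $\kappa_2=f(\kappa_1)$. This gives \cref{e2.3}.

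\textbf{Ellipticity in $M$.} The function $\tilde F(\kappa)=\frac{\kappa_1+\kappa_2}{\sqrt2}-\phi(\frac{\kappa_1-\kappa_2}{\sqrt2})$ has $\partial_{\kappa_i}\tilde F\in[\frac{1}{\sqrt2}(1-|\phi'|),\frac{1}{\sqrt2}(1+|\phi'|)]$. Both bounds are positive constants depending only on $\Lambda$. A symmetric function of the eigenvalues is monotone in the matrix argument, with derivative eigenvalues controlled by the $\partial_{\kappa_i}\tilde F$. This holds even at $\kappa_1=\kappa_2$, because the bounds are uniform; we argue via the a.e. characterization in \Cref{re1.2}.

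Composing with $M\mapsto A(p,M)$, which is linear in $M$ with $W^{-1}g^{-1}$-type coefficients, gives
\[
c(|p|)\,\bar\lambda_0 I\le F_M\le C(|p|)\,\bar\Lambda_0 I.
\]
The constants degenerate only as $|p|\to\infty$. Hence $F$ is degenerate elliptic everywhere and uniformly elliptic on $\{|p|\le K\}$.

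\textbf{Enhancement under $\|u\|_{C^2}\le K$.} The natural $F$ is not uniformly elliptic on all of $\mathbb{R}^2\times\mathcal S^2$, and $F_p$ is not globally bounded: $\partial_p A$ is of size $|M|$. We therefore modify $F$ outside the region actually used by $u$. Fix a cutoff $\Psi\in C^\infty_c$ with $\Psi\equiv1$ on $\{|p|\le K,|M|\le K\}$ and support in $\{|p|\le 2K,|M|\le 2K\}$. Set
\[
\hat F(p,M)=\Psi\,F(p,M)+(1-\Psi)\operatorname{tr}M.
\]
Then $\hat F=F$ along $(Du,D^2u)$. In the convex combination, the $F_M$ bounds persist, since both terms are elliptic with constants depending on $K,\Lambda$. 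The term $D\Psi\cdot(F-\operatorname{tr}M)$ is bounded on the compact support.

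This last term is the main obstacle. Its $M$-derivative part $D_M\Psi\,(F-\operatorname{tr}M)$ can spoil the lower ellipticity bound. To avoid this, I would cut off only in $p$ with $\Psi=\Psi(p)$. For large $|M|$ at fixed bounded $p$, I would instead replace $\tilde F$ by a globally Lipschitz modification that is linear outside $|\kappa|\le C(K)$. The function $\tilde F$ is already globally Lipschitz in $\kappa$ with uniform bounds, so this is automatic. Then $F_p=\partial_\kappa\tilde F\cdot\partial_pA$ is bounded by $C|M|$ and is still unbounded. To fix this, replace $M$ by its truncation $\pi_K(M)$ in the $p$-dependence only, i.e. use $A(p,M)$ for $|M|\le 2K$ and freeze $p$-dependence beyond. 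Equivalently, define
\[
\hat F(p,M)=\tilde F\big(\mathrm{eig}(A(\chi(p),M))\big),
\]
with $\chi$ a radial retraction onto $B_{2K}$, and accept Lipschitz constant $\mu\lesssim K$ on the relevant set. This set is where \cref{e1.7-2} is applied, via the Newton–Leibniz formula \cref{e2.11} along segments between points of the graph of $(Du,D^2u)$. Lemma \ref{le2.7} would be used if one needs $C^{1,1}$ control of $\sqrt{\cdot}$ terms like the discriminant near umbilic points.
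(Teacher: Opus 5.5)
Your construction of $F$ is correct, and so is the proof that it is elliptic, Lipschitz, and uniformly elliptic in $M$ on $\{|p|\le K\}$. This part takes a different route from the paper.

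The paper works with the explicit formula \cref{e4.2}. It controls the derivatives of $\sqrt{\mathcal H^2-\mathcal K}$ near umbilics by applying \Cref{le2.7} to the discriminant $G$, and it gets the lower ellipticity bound from Hopf's ellipticity together with \Cref{re1.4}. Your spectral argument needs none of this. It gives $F_M$ bounds depending only on $\Lambda$ and $|p|$. Combined with Weyl's inequality and $|\partial_pA|\le C|M|$, it also yields $|F(p,M)-F(q,M)|\le C|M||p-q|$ for merely Lipschitz $f$; compare \cref{e2.10}, which the paper proves only for $f\in C^{1,1}_{loc}$ on $\{\mathcal K\le 0\}$.

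The rotation to $\phi$ is unnecessary. The same argument applies verbatim to the paper's operator $F=\lambda_{\min}(A)-f(\lambda_{\max}(A))$, whose partials are $1$ in $\lambda_{\min}$ and $-f'\in[\Lambda,\Lambda^{-1}]$ in $\lambda_{\max}$. Using it keeps the operator \cref{e4.2} that the later lemmas rely on.

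\textbf{The gap: the enhancement step.} This is the second assertion of the lemma, and your proposal does not establish it. Your final operator $\hat F(p,M)=\tilde F(\mathrm{eig}\,A(\chi(p),M))$ does not satisfy \cref{e1.7-2}. The retraction $\chi$ bounds $p$, but $\partial_pA$ is still linear in $M$, so $\hat F_p$ is unbounded as $|M|\to\infty$. Your ``equivalently'' is not an equivalence, since nothing is truncated in $M$.

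Concretely, take $f(t)=-t$. Then $\phi\equiv 0$ and $\hat F=\sqrt2\,\mathcal H(\chi(p),M)$, and
\[
\mathcal H(0,sI)-\mathcal H(q,sI)=s\Bigl(1-\frac{2+|q|^2}{2(1+|q|^2)^{3/2}}\Bigr)
\]
grows linearly in $s$ for fixed $0<|q|\le 2K$. So \cref{e1.7-2} with $M=N=sI$ fails for every finite $\mu$.

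Accepting a Lipschitz constant only on the ``relevant set'' merely restates the local uniform ellipticity you already had. That is not the claimed statement. It also does not suffice for how the lemma is used: \Cref{le2.2} and the global $C^{2,\alpha}$ estimate in the proof of \Cref{th5.1} are applied to operators satisfying \cref{e1.7-2} on all of $\mathbb{R}^2\times\mathcal S^2$.

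Your objection to $\Psi(p,M)F+(1-\Psi)\operatorname{tr}M$ is legitimate. The paper only asserts that an interpolation between $F$ on $\mathcal B_{2K+1}$ and $\operatorname{tr}M$ off $\mathcal B_{4K+2}$ exists. Taken literally, that prescription cannot even be monotone along $p=0$, $M=s\,\mathrm{diag}(1,0)$ when $f(t)=-t/\Lambda$ with $\Lambda<1/2$.

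\textbf{A construction that works.} Use the inf-convolution with the sublinear Pucci operator $\mathcal P(X)=\bar\Lambda\operatorname{tr}X^+-\bar\lambda\operatorname{tr}X^-$. Take $r=K+1$, let $\bar\lambda,\bar\Lambda,\mu$ be the constants from your bounds on the convex set $\mathcal B_r$ of \cref{e2.13}, and set
\[
\bar F(p,M)=\inf_{(q,N)\in\mathcal B_r}\bigl\{F(q,N)+\mathcal P(M-N)+\mu|p-q|\bigr\}.
\]
\begin{itemize}
\item $\bar F$ is finite because $\mathcal B_r$ is bounded.
\item $\bar F=F$ on $\mathcal B_r$, because \cref{e2.11} on the convex set $\mathcal B_r$ gives $F(p,M)\le F(q,N)+\mathcal P(M-N)+\mu|p-q|$ there.
\item Subadditivity of $\mathcal P$ and the triangle inequality give $\bar F(p,M)\le \bar F(q,N)+\mathcal P(M-N)+\mu|p-q|$ for all $(p,M),(q,N)$, which is \cref{e1.7-2}.
\end{itemize}

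Alternatively, cut off in $M$ on a logarithmic scale. Let $\eta=\eta_0(L^{-1}\log(1+|M|))$, with $\eta_0=1$ on $[0,1]$ and $\eta_0=0$ on $[2,\infty)$, and put $\hat F=\eta\,F(\chi(p),M)+(1-\eta)\operatorname{tr}M$. Since $|F-\operatorname{tr}M|\le C(1+|M|)$ and $|D_M\eta|\le C/(L(1+|M|))$, the error term $(F-\operatorname{tr}M)D_M\eta$ is $O(1/L)$. Your $F_M$ bounds do not depend on $|M|$, so ellipticity survives for large $L$. Meanwhile $\hat F_p=0$ for $|M|\ge e^{2L}$, so $\hat F_p$ is bounded.
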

\proof This lemma can be deduced from  \cite[pp. 1913-1915]{MR4684292}. For the readers' convenience, we give the proof below. Let us consider the mean curvature and the Gaussian curvature of the graph:
\begin{equation*}
\mathcal{H}=\left(\kappa_1+\kappa_2\right)/2, \quad \mathcal{K}=\kappa_1\kappa_2.
\end{equation*}
Then \cref{e1.6} can be rewritten as
\begin{equation*}
\mathcal{H}-\sqrt{\mathcal{H}^2-\mathcal{K}}
-f\left(\mathcal{H}+\sqrt{\mathcal{H}^2-\mathcal{K}}\right)=0.
\end{equation*}
Hence, $u$ is a solution of the following fully nonlinear equation:
\begin{equation}\label{e2.24}
F(Du,D^2u)= 0\quad\mbox{in}~~\Omega,
\end{equation}
where
\begin{equation}\label{e4.2}
F(p,M):=\mathcal{H}-\sqrt{\mathcal{H}^2-\mathcal{K}}
-f\left(\mathcal{H}+\sqrt{\mathcal{H}^2-\mathcal{K}}\right),
\end{equation}
and $\mathcal{H}$, $\mathcal{K}$ are functions of $(p,M)$ given by
\begin{equation*}
\begin{aligned}
\mathcal{H}(p,M):=\frac{(1+p_2^2)M_{11} - 2 p_1p_2 M_{12}  + (1+p_1^2)M_{22}}{2(1+|p|^2)^{3/2}}, \quad
\mathcal{K}(p,M):= \frac{\det M}{(1+|p|^2)^2}.
\end{aligned}
\end{equation*}
Obviously, $F$ is a continuous function. The ellipticity of $F$ is well-known (see \cite[p. 129]{MR1013786}).

Next, we prove the uniform ellipticity of $F$. Define for $r\geq 1$
\begin{equation}\label{e2.13}
\mathcal{B}_r:=\left\{(p,M)\in \mathbb{R}^2\times \mathcal{S}^2: |p|+|M|< r\right\}
\end{equation}
and
\begin{equation}\label{e2.2}
A_r:=\left\{(p,M)\in \mathcal{B}_{r}: \mathcal{H}^2-\mathcal{K}=0\right\}=\left\{(p,M)\in \mathcal{B}_{r}:M=\nu
\begin{pmatrix}
1+p_1^2&p_1 p_2\\
p_1 p_2&1+p_2^2
\end{pmatrix},\nu\in\mathbb{R}
\right\}.
\end{equation}
Note that $A_r$ is a submanifold of lower dimension. By the definition of $F$, we have $F\in C^{0,1}_{loc}$ in $\mathcal{B}_{r}\backslash A_r$.

We have known that $F$ is elliptic and to prove the uniform ellipticity, we only need to show (cf. \Cref{re1.2} and \Cref{re1.4})
\begin{equation}\label{e2.6}
|F_\omega|\leq C\quad a.e.~\mbox{in}~~\mathcal{B}_{r}\backslash A_r,~\forall ~\omega\in \left\{p_1,p_2,M_{11},M_{12},M_{22}\right\},
\end{equation}
where $F_\omega:=\partial F/\partial \omega$ and $C$ depends on $r$. By a direct calculation,
\begin{equation}\label{e2.12}
F_\omega=(1-f')\mathcal{H}_\omega
-\frac{1}{2}(1+f')\frac{\left(\mathcal{H}^2-\mathcal{K}\right)_\omega}
{\sqrt{\mathcal{H}^2-\mathcal{K}}}\quad a.e.~\mbox{in}~~\mathcal{B}_{r}\backslash A_r.
\end{equation}
Let us see that there exists a constant $C$ (depending only on $r$) such that
\begin{equation}\label{e2.26}
\left|\frac{\left(\mathcal{H}^2-\mathcal{K}\right)_\omega}{\sqrt{\mathcal{H}^2-\mathcal{K}}}\right|\leq C \quad\mbox{in}~~\mathcal{B}_{r}\backslash A_r, ~\forall ~\omega\in \left\{p_1,p_2,M_{11},M_{12},M_{22}\right\}.
\end{equation}

We have
\begin{equation*}
\mathcal{H}^2-\mathcal{K}=\frac{G(p,M)}{4
   \left(1+|p|^2\right)^3},
\end{equation*}
where
\begin{equation*}
G(p,M)=\left(\left(1+p_2^2\right) M_{11}-2 p_1 p_2 M_{12}+\left(1+p_1^2\right) M_{22}\right)^2
-4 \left(1+p_1^2+p_2^2\right) \left(M_{22} M_{11}-M_{12}^2\right).
\end{equation*}

Note that $G\geq 0$ is a polynomial. By \Cref{le2.7},
\begin{equation*}
|DG|\leq C\sqrt{G}\quad\mbox{in}~~\mathcal{B}_{r},
\end{equation*}
where $C$ depends only on $r$. From this inequality, we infer that  \cref{e2.6} holds. That is, $F\in C^{0,1}(\mathcal{B}_{r})$ and $F$ is uniformly elliptic in $\mathcal{B}_{r}$ (in the sense of \Cref{de2.1}) with ellipticity constants $\bar{\lambda},\bar{\Lambda}$ and $\mu$ depending on $r$.

If $\|u\|_{C^2(\bar{\Omega})}\leq K$, we have
\begin{equation*}
\left\{(Du(x),D^2u(x)): x\in \bar \Omega\right\}\subset \mathcal{B}_{K},
\end{equation*}
and we can define a new operator $F$ as follows
\begin{equation*}
\bar F(p,M):=\left\{
  \begin{aligned}
    &F(p,M)\quad&&\mbox{if}~~(p,M)\in  \mathcal{B}_{2K+1};\\
    &M_{11}+M_{22}\quad&&\mbox{if}~~(p,M)\in  \mathcal{B}_{4K+2}^c
  \end{aligned}
  \right.
\end{equation*}
such that $\bar F$ is a uniformly elliptic operator in $\mathbb{R}^2\times \mathcal{S}^2$. Clearly, $u$ is also a solution of \cref{e2.24} with $F$ replaced by $\bar F$. ~\qed\bigskip

\begin{remark}\label{re2.3}
From \cref{e2.12}, we know that if $\omega\in \left\{p_1,p_2\right\}$,
\begin{equation}\label{e4.3-2}
F_{\omega}(p,M)\to 0 \quad\mbox{for }~~(p,M)\in \mathcal{B}_{r}\backslash A_r,~M\to 0.
\end{equation}
The reason is that $\left(\mathcal{H}^2-\mathcal{K}\right)_\omega$ is quadratic in $M$.
\end{remark}

\begin{remark}\label{re2.4}
It follows from the proof that \Cref{le2.6} still holds if we relax the uniformly elliptic condition \cref{e1.6-2} to the elliptic condition:
\begin{equation*}
-\infty<\inf_{t\in [c,a]}f'(t)\leq \sup_{t\in [c,a]}f'(t)<0,~\forall ~a<+\infty.
\end{equation*}
\end{remark}
\medskip

In the rest of this section, we focus on the study of graphs defined in the exterior of a disk. So, let $u$ define such a graph in $B_R^c$ satisfying \cref{e1.6}--\cref{e1.6-2}--\cref{e1.6-t}. Since its Gauss map is a quasiregular mapping then, from \cite[Theorem 6]{MR454854}, there exists $\lim_{x\to\infty}Du(x)\in\mathbb{R}^2$. Thus, since our study focuses on the behavior of the graph at infinity, up to an isometry of $\mathbb{R}^3$, we may assume that $u$ is a graph in the exterior of a disk with  $\lim_{x\to\infty}|Du(x)|=0$.

\begin{lemma}\label{le2.0}
Let $u$ define a graph satisfying \cref{e1.6}--\cref{e1.6-2}--\cref{e1.6-t} in $B_R^c$ with $f\in C_{loc}^{0,1}[0,+\infty)$ and
\begin{equation}\label{ec2.10}
\lim_{x\to\infty}|Du(x)|=0.
\end{equation}
Then
\begin{equation}\label{e2.7}
|D^2u(x)|\leq C|\sigma(x)|\leq C|x|^{-1},~\forall ~x\in B_{2R}^c.
\end{equation}
\end{lemma}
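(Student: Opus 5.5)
The plan has two parts. First we bound the Hessian of $u$ pointwise by the second fundamental form. Then we control $|\sigma|$ through the curvature estimate of \Cref{le2.1}, applied on intrinsic balls of size comparable to $|x|$.

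\emph{Step 1: the first inequality.} By \cref{ec2.10}, after enlarging $R$ if necessary (or simply by continuity on a compact annulus together with the limit), we have $|Du|\leq C_0$ on $B_{2R}^c$. For a graph, the second fundamental form in the coordinate frame is $h_{ij}=u_{ij}/\sqrt{1+|Du|^2}$. The induced metric $g_{ij}=\delta_{ij}+u_iu_j$ is uniformly equivalent to the Euclidean one when $|Du|\leq C_0$. It follows that $|D^2u|\leq C|\sigma|$, where $C$ depends only on $C_0$.

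\emph{Step 2: the decay $|\sigma(x)|\leq C|x|^{-1}$.} Fix $x$ with $|x|\geq 2R$. Consider the piece of the graph over the disk $B_{|x|/2}(x)\subset B_R^c$, and call it $\Sigma_x$. We want to apply \Cref{le2.1} to $\Sigma_x$. Its Gauss map is $N=(-Du,1)/\sqrt{1+|Du|^2}$, so $N_3\geq (1+C_0^2)^{-1/2}>0$ and $N$ lies in an open hemisphere. Its boundary is the part of the graph lying over $\partial B_{|x|/2}(x)$. The intrinsic distance from $(x,u(x))$ to that boundary is at least the Euclidean distance of the projections, $|x|/2$, because vertical projection does not increase length. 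The remaining issue is completeness: \Cref{le2.1} is stated for complete surfaces with boundary. The surface $\Sigma_x$ is the graph over a compact closed disk, so it is compact with boundary, and hence complete as a metric space. Applying \Cref{le2.1} then gives
$$|\sigma(x)|\leq \frac{C}{d(p,\partial\Sigma_x)}\leq \frac{2C}{|x|},$$
with $C$ depending only on $\Lambda$. Combining this with Step 1 yields \cref{e2.7}.

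\emph{Main obstacle.} The only delicate point is checking that the hypotheses of \Cref{le2.1} apply. This means confirming that the compact graph piece counts as a complete surface with boundary, and that the hemisphere condition holds uniformly. The latter is exactly where \cref{ec2.10} is used. If \Cref{le2.1} had to be applied globally instead, we would use the whole graph over $\overline{B_R^c}$. Its distance to the boundary $\partial B_R$ is at least $|x|-R\geq |x|/2$ for $|x|\geq 2R$, so the same bound follows, provided the hemisphere condition holds on all of $B_R^c$, that is, $|Du|$ is bounded there.
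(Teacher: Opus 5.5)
Your proof is correct and essentially matches the paper's: the paper applies \Cref{le2.1} to the whole graph over $B_R^c$, uses $d(p,\partial\Sigma)\geq |x|-R\geq |x|/2$, and handles the bounded annulus by compactness (your fallback), while restricting to the disk $\bar B_{|x|/2}(x)$ is an equivalent local variant. One small correction: the open-hemisphere hypothesis holds for any graph, since $N_3=(1+|Du|^2)^{-1/2}>0$, so \cref{ec2.10} is needed only for the uniform gradient bound that gives $|D^2u|\leq C|\sigma|$ in Step 1.
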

\proof Given $R_0>R$, we get from \cref{ec2.10} and \Cref{le2.1} that
\begin{equation*}
|D^2u(x)|\leq C|\sigma(x)|\leq C|x-R|^{-1},~\forall ~x\in B_{R_0}^c.
\end{equation*}
Thus, \cref{e2.7} is obtained from the compactness of the annulus $\bar{B}_{R_0}-B_R$.
~\qed\bigskip

\begin{remark}\label{re2.2}
Since $|Du|$ and $|D^2u|$ are bounded, by \Cref{le2.6}, $u$ is a solution of a uniformly elliptic equation. By the $C^{2,\alpha}$ regularity (see \Cref{le2.2}), $u\in C^{2,\alpha}$ for some $0<\alpha<1$.
\end{remark}
\medskip

Now, we obtain the following second derivatives estimates for the fully nonlinear operator $F$.
\begin{lemma}\label{le2.4}
Let $F$ be the fully nonlinear elliptic operator in \cref{e4.2} where $f\in C^{1,1}_{loc}[0,+\infty)$ and satisfies \cref{e1.6-2}--\cref{e1.6-t}. Then for any $r>0$, there exists $C>0$ (depending on $r$) such that
\begin{equation}\label{e2.30-1}
\begin{aligned}
|F_{\omega\tau}|\leq C\quad a.e.~\mbox{in}~~\tilde{\mathcal{B}}_{r}\backslash \tilde A_r,
~\forall ~\omega\in \left\{p_1,p_2\right\},
\tau\in \left\{M_{11},M_{12},M_{22}\right\},
\end{aligned}
\end{equation}
where $\tilde{\mathcal{B}}_r$ and $\tilde A_r$ are defined as
\begin{equation*}
\tilde{\mathcal{B}_r}:=\left\{(p,M)\in \mathbb{R}^2\times \mathcal{S}^2: |p|+|M|< r, \quad \mathcal{K}\leq 0\right\}
\end{equation*}
and
\begin{equation*}
\begin{aligned}
\tilde{A}_r&:=\left\{(p,M)\in \tilde{\mathcal{B}}_{r}: \mathcal{H}^2-\mathcal{K}=0\right\}.
\end{aligned}
\end{equation*}

If $f'(0)=-1$, we have
\begin{equation}\label{e2.30}
|F_{\omega\tau}|\leq C\quad a.e.~\mbox{in}~~\tilde{\mathcal{B}}_{r}\backslash \tilde{A}_r,~\forall ~\omega,\tau\in \left\{p_1,p_2,M_{11},M_{12},M_{22}\right\}.
\end{equation}
\end{lemma}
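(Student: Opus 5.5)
The plan is to differentiate the explicit formula \cref{e4.2} twice and to use the restriction $\mathcal{K}\leq 0$ to control the singular set. Write $S:=\sqrt{\mathcal{H}^2-\mathcal{K}}$, so that $\kappa_1=\mathcal{H}+S$ and $F=\mathcal{H}-S-f(\mathcal{H}+S)$. By \cref{e2.12}, away from $\tilde A_r$ we have $F_\omega=(1-f'(\kappa_1))\mathcal{H}_\omega-(1+f'(\kappa_1))S_\omega$. Since $f\in C^{1,1}_{loc}$, the function $f'$ is Lipschitz and hence differentiable a.e. This gives, a.e.,
\begin{equation*}
F_{\omega\tau}=-f''(\kappa_1)(\mathcal{H}_\omega+S_\omega)(\mathcal{H}_\tau+S_\tau)+(1-f'(\kappa_1))\mathcal{H}_{\omega\tau}-(1+f'(\kappa_1))S_{\omega\tau}.
\end{equation*}
On $\tilde{\mathcal B}_r$ we have $0\leq\kappa_1\leq C(r)$, because $\mathcal{K}\leq 0$ forces $\kappa_1\geq 0$. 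Hence $f''$ and $f'$ are bounded there. The derivatives $\mathcal{H}_\omega$ and $\mathcal{H}_{\omega\tau}$ are bounded since $\mathcal{H}$ is smooth, and $S_\omega$ is bounded by \cref{e2.26}. Everything therefore reduces to bounding $(1+f'(\kappa_1))S_{\omega\tau}$.

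The key structural observation is that on $\tilde{\mathcal B}_r$ the degenerate set is just $\{M=0\}$. Indeed, $\mathcal{H}^2-\mathcal{K}=0$ together with $\mathcal{K}\leq 0$ gives $\mathcal{H}=\mathcal{K}=0$, hence $\kappa_1=\kappa_2=0$, and so $M=0$. Now $G(p,M)=M^TQ(p)M$ is a quadratic form in $M$ whose coefficients are polynomials in $p$. It is positive on the closed set $\{|p|\leq r,\ |M|=1,\ \det M\leq 0\}$. By compactness and homogeneity, this yields
\begin{equation*}
c(r)|M|^2\leq G(p,M)\leq C(r)|M|^2\quad\mbox{on }\tilde{\mathcal B}_r .
\end{equation*}
By homogeneity in $M$, we also have $|G_{p}|\leq C|M|^2$, $|G_M|\leq C|M|$, $|G_{pp}|\leq C|M|^2$, $|G_{pM}|\leq C|M|$ and $|G_{MM}|\leq C$.

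Since $S=\sqrt{G}/(2(1+|p|^2)^{3/2})$, the second derivatives of $S$ are combinations of terms $G_{\omega\tau}/\sqrt G$ and $G_\omega G_\tau/G^{3/2}$, plus lower-order terms involving only smooth factors in $p$. For $\omega\in\{p_i\}$ and $\tau\in\{M_{jk}\}$ these terms are $O(|M|/|M|)+O(|M|^2|M|/|M|^3)=O(1)$, which proves \cref{e2.30-1}. The same count gives $S_{p_ip_j}=O(|M|)$, so the $pp$ derivatives are bounded in any case. For $\omega,\tau\in\{M_{jk}\}$, however, we only get $S_{\omega\tau}=O(|M|^{-1})$, and this is genuinely unbounded.

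This last case is the main obstacle, and it is exactly where the hypothesis $f'(0)=-1$ enters. Because $f'$ is Lipschitz,
\begin{equation*}
|1+f'(\kappa_1)|=|f'(\kappa_1)-f'(0)|\leq C\kappa_1\leq C|M| .
\end{equation*}
Hence $|(1+f'(\kappa_1))S_{\omega\tau}|\leq C$, and together with the bounds above this gives \cref{e2.30} for all pairs $\omega,\tau$. The only care needed is to justify the a.e. chain rule for $f'\circ\kappa_1$. This follows because $\kappa_1$ is smooth on $\tilde{\mathcal B}_r\backslash\tilde A_r$, with nonvanishing gradient in the $M$ directions, so the preimage of the null set where $f''$ fails to exist is itself null.
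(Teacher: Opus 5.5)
Your proof is correct and follows essentially the same route as the paper. You differentiate \cref{e2.12} once more and count degrees using the homogeneity of $\mathcal{H}^2-\mathcal{K}$ in $M$, together with the lower bound $\mathcal{H}^2-\mathcal{K}\geq c|M|^2$ forced by $\mathcal{K}\leq 0$; you get that bound by compactness, while the paper gets it from $\mathcal{H}^2-\mathcal{K}=\frac14(\kappa_1-\kappa_2)^2\geq\frac14|\sigma|^2$. When $f'(0)=-1$ you use $|1+f'(\kappa_1)|\leq C\kappa_1$ to absorb the singular factor $1/\sqrt{\mathcal{H}^2-\mathcal{K}}$, exactly as the paper does.

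Your symmetric formula $F_{\omega\tau}=-f''(\mathcal{H}_\omega+S_\omega)(\mathcal{H}_\tau+S_\tau)+(1-f')\mathcal{H}_{\omega\tau}-(1+f')S_{\omega\tau}$ is cleaner bookkeeping than \cref{e2.29}, whose $f''$ terms contain small slips that do not affect the bounds. Your remark on the a.e.\ chain rule also settles a point the paper leaves implicit.
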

\proof We calculate the derivatives of $F$ in $\tilde{\mathcal{B}}_{r}\backslash \tilde A_r$. From \cref{e2.12}, given $\omega\in \left\{p_1,p_2\right\},
\tau\in \left\{M_{11},M_{12},M_{22}\right\}$,  we have
\begin{equation}\label{e2.29}
  \begin{aligned}
F_{\omega\tau}=&-f''\mathcal{H}_\omega\left(\mathcal{H}_\tau+\frac{1}{2}\frac{\left(\mathcal{H}^2-\mathcal{K}\right)_\tau}
{\sqrt{\mathcal{H}^2-\mathcal{K}}}\right)+(1-f')H_{\omega\tau}
-\frac{f''}{2}\frac{\left(\mathcal{H}^2-\mathcal{K}\right)_\omega}
{\sqrt{\mathcal{H}^2-\mathcal{K}}}\frac{\left(\mathcal{H}^2-\mathcal{K}\right)_\tau}
{\sqrt{\mathcal{H}^2-\mathcal{K}}}\\
    &
-\frac{1}{2}(1+f')\frac{\left(\mathcal{H}^2-\mathcal{K}\right)_{\omega\tau}}
{\sqrt{\mathcal{H}^2-\mathcal{K}}}
+\frac{1+f'}{4}\frac{\left(\mathcal{H}^2-\mathcal{K}\right)_\omega}
{\mathcal{H}^2-\mathcal{K}}\frac{\left(\mathcal{H}^2-\mathcal{K}\right)_\tau}
{\sqrt{\mathcal{H}^2-\mathcal{K}}}.
  \end{aligned}
\end{equation}

With the aid of \cref{e2.26}, to prove \cref{e2.30-1}, we only need to show
\begin{equation}\label{e4.4}
\left|\frac{\left(\mathcal{H}^2-\mathcal{K}\right)_{\omega}}
{\mathcal{H}^2-\mathcal{K}}\right|\leq C, \quad
\left|\frac{\left(\mathcal{H}^2-\mathcal{K}\right)_{\omega\tau}}
{\sqrt{\mathcal{H}^2-\mathcal{K}}}\right|\leq C.
\end{equation}
First, note that $\mathcal{H}^2-\mathcal{K}$ is homogenous and quadratic in $M$. Thus, $\left(\mathcal{H}^2-\mathcal{K}\right)_{\omega}$ has the same property since $\omega\in \{p_1,p_2\}$. Then with the aid of \Cref{le2.0}, we have
\begin{equation*}
\left|\left(\mathcal{H}^2-\mathcal{K}\right)_{\omega}\right|\leq C|\sigma|^2.
\end{equation*}
In addition, note that $\mathcal{K}\leq 0$ and hence
\begin{equation}\label{e4.5}
\mathcal{H}^2-\mathcal{K}=\frac{1}{4}(\kappa_1^2+\kappa_2^2-2\kappa_1\kappa_2)
\geq \frac{1}{4}(\kappa_1^2+\kappa_2^2)=\frac{1}{4}|\sigma|^2.
\end{equation}
So, the first inequality in \cref{e4.4} holds. Similarly, the second holds as well. Therefore, we obtain \cref{e2.30-1}.

Next, we consider the case $f'(0)=-1$. From $|\mathcal{H}|\leq |\sigma|/\sqrt{2}$ and \cref{e4.5}, we have
\begin{equation*}\label{e2.27}
\left|\frac{\mathcal{H}}{\sqrt{\mathcal{H}^2-\mathcal{K}}}\right|\leq C.
\end{equation*}
Then by noting $f'(0)=-1$, there exists $\xi>0$ such that
\begin{equation*}
|1+f'|\leq |f''(\xi)|\left|\mathcal{H}+\sqrt{\mathcal{H}^2-\mathcal{K}}\right|\leq C\left|\mathcal{H}+\sqrt{\mathcal{H}^2-\mathcal{K}}\right|.
\end{equation*}
Hence,
\begin{equation*}\label{e2.28}
\left|\frac{1+f'}{\sqrt{\mathcal{H}^2-\mathcal{K}}}\right|\leq C\left|\frac{\mathcal{H}}{\sqrt{\mathcal{H}^2-\mathcal{K}}}\right|+C\leq C.
\end{equation*}
Then we know from \cref{e2.29} that
\begin{equation*}
|F_{\omega\tau}|\leq C,~\forall ~\omega,\tau\in \left\{p_1,p_2,M_{11},M_{12},M_{22}\right\}.
\end{equation*}
~\qed\bigskip

\begin{lemma}\label{le2.5}
Let $F$ be the fully nonlinear elliptic operator in \cref{e4.2} where $f\in C^{1,1}_{loc}[0,+\infty)$ and satisfies \cref{e1.6-2}--\cref{e1.6-t}. Then for any $r>0$, there exists $C>0$ (depending on $r$) such that
\begin{equation}\label{e2.10}
\begin{aligned}
|F(p,M)-F(q,M)|\leq C|M||p-q|\quad \mbox{in}~~\tilde{\mathcal{B}}_{r},
\end{aligned}
\end{equation}
where $\tilde{\mathcal{B}}_r$ is defined as above.

Moreover, if $f'(0)=-1$, $F\in C^{1,1}(\overline{\tilde{\mathcal{B}}_r})$.
\end{lemma}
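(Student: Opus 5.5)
Both claims come from the explicit derivative formulas \cref{e2.12} and \cref{e2.29}, together with the lower bound \cref{e4.5}, $\mathcal{H}^2-\mathcal{K}\geq \frac14|\sigma|^2$, which holds on $\tilde{\mathcal{B}}_r$ because $\mathcal{K}\leq 0$ there. On $\tilde{\mathcal{B}}_r$ we also have $|\sigma|$ comparable to $|M|$, with constants depending on $r$, since $|p|<r$.

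\emph{Step 1: the Lipschitz bound \cref{e2.10}.} I would apply the Newton--Leibniz formula \cref{e2.11} in the $p$ variable only, integrating along $t\mapsto(q+t(p-q),M)$. This requires two facts.
\begin{itemize}
\item The segment must stay in a region where the estimates hold. The set $\{|p|<r\}$ is convex, and the sign of $\mathcal{K}=\det M/(1+|p|^2)^2$ does not depend on $p$. Hence if $(p,M),(q,M)\in\tilde{\mathcal{B}}_r$, the whole segment lies in $\tilde{\mathcal{B}}_{r'}$ for some $r'$ depending on $r$.
\item We need $|F_\omega|\leq C|M|$ for $\omega\in\{p_1,p_2\}$, almost everywhere off $\tilde A_{r'}$.
\end{itemize}
For the second fact, note from \cref{e2.12} that $\mathcal{H}_\omega$ is linear in $M$. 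The numerator $(\mathcal{H}^2-\mathcal{K})_\omega$ is homogeneous quadratic in $M$, so it is $O(|M|^2)$. Dividing by $\sqrt{\mathcal{H}^2-\mathcal{K}}\geq c|\sigma|\geq c|M|$ leaves $O(|M|)$.

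The set $\tilde A_{r'}$ has lower dimension. It is also excluded along the segment unless $M$ is a multiple of the $p$-dependent matrix in \cref{e2.2}. Since $F$ is Lipschitz (by \Cref{le2.6}), I would handle this either by a perturbation and density argument in $M$, or by noting that the segment meets $\tilde A$ in at most finitely many points, or lies in it entirely, in which case $F=f$-combinations of $\mathcal{H}$ that are explicitly Lipschitz with constant $O(|M|)$. In either case \cref{e2.10} follows.

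\emph{Step 2: $C^{1,1}$ regularity when $f'(0)=-1$.} By \Cref{le2.4}, \cref{e2.30} gives bounded second derivatives almost everywhere on $\tilde{\mathcal{B}}_r\setminus\tilde A_r$. It remains to show that $DF$ is Lipschitz across $\tilde A_r$. This is the main obstacle: a bounded almost-everywhere Hessian gives $C^{1,1}$ only if $DF$ has no jump across the singular set.

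I would show that $DF$ extends continuously to $\tilde A_r$. At points of $\tilde A_r$ with $\mathcal{K}\leq 0$ and $\mathcal{H}^2=\mathcal{K}$ we get $\mathcal{H}=\mathcal{K}=0$, hence $M=0$. Near $M=0$, \cref{e2.12} has two pieces:
\begin{itemize}
\item The term $(1-f')\mathcal{H}_\omega$ tends to $2\mathcal{H}_\omega$, which is continuous.
\item The term $(1+f')\,(\mathcal{H}^2-\mathcal{K})_\omega/\sqrt{\mathcal{H}^2-\mathcal{K}}$ has its second factor bounded by \cref{e2.26}. Its first factor satisfies $|1+f'|\leq C|\kappa_1|\to 0$, using $f\in C^{1,1}$ and $f'(0)=-1$.
\end{itemize}
So $DF$ extends continuously, with value $2\mathcal{H}_\omega(p,0)$ on $\tilde A_r$.

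Since $\tilde A_r$ is closed, has measure zero, and has a complement in which segments can be perturbed away from it, a continuous $DF$ with bounded almost-everywhere derivative is Lipschitz: integrate $D^2F$ along almost every nearby segment and pass to the limit. Convexity-type issues at the boundary $\{\mathcal{K}=0\}$ I would again handle by working in the slightly larger set $\tilde{\mathcal{B}}_{r'}$ with its closure.
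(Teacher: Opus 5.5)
Your argument is correct. Step 2 is essentially the paper's argument:
\begin{itemize}
\item since $\mathcal{K}\le0$, the limit $\mathcal{H}^2-\mathcal{K}\to0$ forces $\mathcal{H}\to0$, so $\tilde A_r=\{M=0\}$;
\item hence $F_\omega\to 2\mathcal{H}_\omega$ there, by \cref{e2.12}, \cref{e2.26} and $f'(0)=-1$;
\item the a.e.\ bound \cref{e2.30} then upgrades to Lipschitz continuity of $DF$.
\end{itemize}

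For \cref{e2.10}, however, you take a more direct route than the paper. The paper writes $F_p(\xi,M)=[F_p(\xi,M)-F_p(\xi,tM)]+F_p(\xi,tM)$. It bounds the bracket by $C(1-t)|M|$ using the mixed second-derivative estimate \cref{e2.30-1} of \Cref{le2.4}, which involves $f''$ (see \cref{e2.29}), and lets $t\to0$ using \cref{e4.3-2}. You read $|F_p|\le C|M|$ directly off \cref{e2.12}, using only three facts: $|f'|\le\Lambda^{-1}$, the linearity of $\mathcal{H}_p$ in $M$, and $|(\mathcal{H}^2-\mathcal{K})_p|/\sqrt{\mathcal{H}^2-\mathcal{K}}\le C|M|^2/|\sigma|\le C|M|$.

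Your route is shorter and does not need $f\in C^{1,1}$. Rephrased with the chain rule along the segment, $|\tfrac{d}{dt}f(\kappa_1)|\le\Lambda^{-1}|\tfrac{d}{dt}\kappa_1|$ a.e., it gives \cref{e2.10} for merely Lipschitz $f$. That is exactly the hypothesis under which \Cref{le2.3} invokes this lemma. The paper's route, in turn, recycles the second-derivative bounds it needs anyway for the $C^{1,1}$ claim.

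Two small corrections.

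\emph{Step 1.} Your discussion of the singular set is more complicated than necessary. By the observation you make in Step 2, $\tilde A_r=\{M=0\}$. So for fixed $M\ne0$ the segment never meets $\tilde A_r$, while for $M=0$ both sides of \cref{e2.10} vanish because $F(p,0)\equiv0$. Moreover, the segment lies in $\tilde{\mathcal B}_r$ itself, since $|q+t(p-q)|+|M|<r$, so no enlarged $r'$ is needed.

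\emph{Step 2.} Passing to $\tilde{\mathcal B}_{r'}$ does not cure the non-convexity of $\tilde{\mathcal B}_r$, because the constraint $\det M\le0$ does not depend on $r$. What works is quasi-convexity. With $a=\tfrac12\operatorname{tr}M$, $b=\tfrac12(M_{11}-M_{22})$, $c=M_{12}$, one has $\det M=a^2-b^2-c^2$, so $\{\mathcal K\le0\}$ is the closed exterior of a double cone. Any two points of $\tilde{\mathcal B}_r$ can then be joined inside $\tilde{\mathcal B}_r$ by a path of length at most a fixed multiple of their distance:
\begin{itemize}
\item first interpolate linearly in $p$, $a$ and $\sqrt{b^2+c^2}$, keeping the angle of $(b,c)$ fixed;
\item then rotate $(b,c)$ to the target angle.
\end{itemize}
Integrating $D^2F$ along such paths gives the Lipschitz bound on $DF$. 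The paper's ``argument similar to the above'' passes over this point as well.
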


\proof We first consider the case that at least one of $(p,M), (q,M)$ does not belong to $A_r$ and the line segment $L$ joining $(p,M)$ to $(q,M)$ is entirely contained in $\tilde{\mathcal{B}}_r\backslash A_r$. Then there exists $\xi\in L$ such that
\begin{equation}\label{e2.32}
F(p,M)-F(q,M)=F_p(\xi,M)\cdot (p-q).
\end{equation}
By the definition of $A_r$ (see \cref{e2.2}), $(\xi, tM)\in \tilde{\mathcal{B}}_r\backslash A_r$ for any $t\in (0,1]$. By \cref{e2.30-1}, we have
\begin{equation*}
|F_p(\xi,M)|\leq |F_p(\xi,M)-F_p(\xi,tM)|+|F_p(\xi,tM)|
\leq C(1-t)|M|+|F_p(\xi,tM)|.
\end{equation*}
Let $t\to 0$ and note \cref{e4.3-2}. Then
\begin{equation}\label{e2.33}
|F_p(\xi,M)|\leq C|M|.
\end{equation}
From \cref{e2.32} and \cref{e2.33},
\begin{equation}\label{e2.34}
|F(p,M)-F(q,M)|\leq C|M||p-q|.
\end{equation}

If the line segment $L$ is not entirely contained in $\tilde{\mathcal{B}}_r\backslash A_r$, we may decompose $L$ into several subsegments each lying entirely in $\tilde{\mathcal{B}}_r\backslash A_r$. Then by applying \cref{e2.34} to each subsegment and summing the resulting estimates, we obtain \cref{e2.34} as well. Finally, if $(p,M), (q,M)\in A_r$, we can also obtain \cref{e2.34} by an approximation procedure.

Next, we consider the case $f'(0)=-1$. We first show that $F_{\omega}$ is continuous in $\tilde{\mathcal{B}_r}$ for any $\omega\in \left\{p_1,p_2,M_{11},M_{12},M_{22}\right\}$. Indeed, since $\mathcal{K}\leq 0$ in $\tilde{\mathcal{B}}_r$,  $\mathcal{H}^2-\mathcal{K}\to 0$ implies $\mathcal{H}\to 0$ and hence $f'(\mathcal{H}+\sqrt{\mathcal{H}^2-\mathcal{K}})\to f'(0)$. Then by \cref{e2.12}, \cref{e2.26} and $f'(0)=-1$, we know that $F_{\omega}\to 2\mathcal{H}_{\omega}$ as $\mathcal{H}^2-\mathcal{K}\to 0$. Thus, $F_{\omega}$ extends continuously to $\tilde {A}_r$. That is, $F_{\omega}$ is continuous in $\tilde{\mathcal{B}}_r$. Then by an argument similar to the above, we conclude that $F_{\omega}$ is Lipschitz continuous in $\tilde{\mathcal{B}}_r$. That is, $F\in C^{1,1}(\overline{\tilde{\mathcal{B}}_r})$. ~\qed\bigskip

Finally, we improve the decay of $|Du|$ and $|D^2u|$.
\begin{lemma}\label{le2.3}
Consider a graph defined by a function $u$ in $B_R^c$ satisfying \cref{e1.6}--\cref{e1.6-2}--\cref{e1.6-t} where $f\in C_{loc}^{0,1}[0,+\infty)$. Assume that $\lim_{x\to\infty}|Du|=0$, then there exists $0<\alpha<1$ such that
\begin{equation*}
|Du(x)|\leq C|x|^{-\alpha}, \quad |D^2u(x)|\leq C|x|^{-1-\alpha},~\forall ~x\in B_{2R}^c.
\end{equation*}
\end{lemma}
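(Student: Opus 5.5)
Set $\rho(r):=\sup_{|x|\geq r}|Du(x)|$; by hypothesis $\rho(r)\to 0$, and we want a geometric decay $\rho(2r)\leq \theta\rho(r)$ for some fixed $\theta<1$ and all large $r$. Iterating this gives $\rho(r)\leq Cr^{-\alpha}$ with $\alpha=-\log_2\theta$. The Hessian bound then follows by scaling. For $|x|=2r$, rescale $v(y)=\bigl(u(x+ry/2)-u(x)\bigr)/(r/2)$ on $B_1$. Then $v$ defines a Weingarten graph for a rescaled relation $f_r(t)=r f(t/r)/2$, which satisfies \cref{e1.6-2}--\cref{e1.6-t} with the same $\Lambda$. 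By \Cref{le2.0}, $|D^2v|\leq C$, and $|Dv|\leq \rho(r)$ is small. So by \Cref{le2.6} the equation is uniformly elliptic with constants depending only on $\Lambda$, and \Cref{le2.2} applied to $v-Dv(0)\cdot y$ gives $|D^2v(0)|\leq C\rho(r)$. Undoing the scaling, $|D^2u(x)|\leq C\rho(r)/r\leq C|x|^{-1-\alpha}$.

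For the gradient decay I use the Gauss map. By \cref{ec1}, $N$ is $K$-quasiregular with $K$ depending only on $\Lambda$. Equivalently, in the coordinates $x$ the map $Du$ satisfies a Beltrami-type system: $D(Du)=D^2u$ is symmetric with eigenvalues of nonpositive product and ratio controlled by $\Lambda$, modulo the metric distortion $(1+|Du|^2)$, which is harmless since $|Du|\leq 1/2$. Hence $w:=Du:B_R^c\to\mathbb{R}^2$ is a $K'$-quasiregular map of the exterior domain with $w(\infty)=0$.

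The key step is a Hölder estimate at infinity. Invert: $\tilde w(z)=w(R'z/|z|^2)$ is $K'$-quasiregular on the punctured disk $B_1\setminus\{0\}$, bounded, and tends to $0$ at $0$. By the removable singularity theorem for bounded quasiregular maps, $\tilde w$ extends quasiregularly across $0$. The Hölder continuity of quasiregular maps (Morrey/Mori type, with exponent $1/K'$) then gives $|\tilde w(z)|\leq C|z|^{1/K'}\sup_{B_1}|\tilde w|$, that is, $|Du(x)|\leq C|x|^{-\alpha}$ with $\alpha=1/K'$.

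The main obstacle is verifying the quasiregularity of $Du$ in the flat coordinates, with a distortion constant that is uniform up to the set where $D^2u=0$. Here $Du$ is a $C^{1,\alpha}$ map (\Cref{re2.2}) with $|D(Du)|^2\leq C|\det D^2u|$, using $\det D^2u\leq 0$ together with \cref{ec1}. Since the map is orientation-reversing, I would compose with a reflection. If one prefers to avoid quasiregular machinery, a fallback is a PDE argument. Differentiating \cref{e2.3} formally in $x_k$, each $u_k$ solves a linear uniformly elliptic equation in nondivergence form, so Krylov–Safonov oscillation decay on annuli is available. However, this needs an additional maximum-principle-at-infinity argument to handle the exterior domain. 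For that reason I would try the quasiregular route first.
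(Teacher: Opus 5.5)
Your proof is correct and has the same two-step structure as the paper: first a power decay of $Du$ from two-dimensional quasiconformal theory, then a scaled interior $C^{2,\alpha}$ estimate. Both steps, however, are carried out differently.

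\textbf{Gradient decay.} The paper uses $F(p,0)\equiv 0$ to write $u$ as a solution of the linear equation $a^{ij}u_{ij}=0$, with $a^{ij}=\int_0^1F_{M_{ij}}(Du,tD^2u)\,dt$, and then cites the Li--Liu exterior gradient estimate for linear equations in the plane. You instead get quasiregularity of $Du$ directly from the pinching \cref{ec1}: $|D^2u|^2\le C|\det D^2u|$ with $\det D^2u\le 0$ once $|Du|\le 1/2$. You then prove the decay yourself by inversion, removability of the isolated singularity for bounded quasiregular maps, and the Mori--Morrey H\"older estimate. In effect this is a self-contained proof of the cited result in the geometric case at hand, with an exponent depending only on $\Lambda$. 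The paper's route is shorter and is reused later for general linear equations (e.g.\ in \Cref{th2.1}).

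\textbf{Hessian.} The paper integrates to $|u|\le C|x|^{1-\alpha}$ and applies \Cref{le2.2} on $B_r(x_0)$ with $r\sim|x_0|$, controlling the drift constant $\mu\sim r^{-1}$ through \Cref{le2.5}. You rescale to unit size and subtract the tangent plane, so the estimate is driven by the oscillation of $Du$. The uniform ellipticity constants then come from \Cref{le2.6} applied to the rescaled family of relations. Your version uses only $|f'|\le 1/\Lambda$, so it matches the hypothesis $f\in C^{0,1}_{loc}$ exactly. By contrast, \Cref{le2.5} is stated for $f\in C^{1,1}_{loc}$. This is not a genuine gap in the paper, because the bound $|F_p|\le C|M|$ also follows for Lipschitz $f$ from the homogeneity of $F_p$ in $M$.

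\textbf{Small corrections.} The rescaled relation is $f_r(t)=\frac{r}{2}f(2t/r)$, not $\frac{r}{2}f(t/r)$; either way $f_r(0)=0$ and the $\Lambda$-bounds hold. Your opening reduction to $\rho(2r)\le\theta\rho(r)$ is never used and can be dropped. The fallback of differentiating the equation is also unnecessary, since the paper avoids differentiation altogether through $F(p,0)\equiv 0$.
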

\proof By \Cref{le2.6} and \Cref{le2.0}, $u$ is a solution of the uniformly elliptic equation \cref{e2.3}. Note that $F(p,0)\equiv 0$ by \cref{e4.2} and $f(0)=0$. Then $u$ is a solution of the following linear uniformly elliptic equation:
\begin{equation}\label{e2.9}
a^{ij}u_{ij}=0\quad\mbox{in}~~ \bar{B}_{2R}^c,
\end{equation}
where
\begin{equation*}
a^{ij}(x):=\int_{0}^{1}F_{M_{ij}}(Du(x),tD^2u(x)) dt.
\end{equation*}
Since $|Du|$ is bounded, by the gradient estimate for linear uniformly elliptic equations in dimension $2$ (see \cite[Theorem 2.6]{MR4792754}), there exists $0<\alpha<1$ such that
\begin{equation}\label{e2.8}
|Du(x)|\leq C|x|^{-\alpha},~\forall ~x\in \bar{B}_{2R}^c.
\end{equation}

In the following, we improve the decay of $D^2u$ at infinity. By \cref{e2.8},
\begin{equation}\label{e2.15}
|u(x)|\leq C|x|^{1-\alpha},~\forall ~x\in \bar{B}_{2R}^c.
\end{equation}

For any $x_0\in \bar{B}_{4R}^c$, let $r=(|x_0|-2R)/2$. Consider the equation \cref{e2.3} in $B_{r}(x_0)$. By \cref{e2.8} and \Cref{le2.0},
\begin{equation}\label{e2.35}
|Du|\leq Cr^{-\alpha}, \quad |D^2u|\leq Cr^{-1}\quad\mbox{in}~~B_{r}(x_0).
\end{equation}
Without loss of generality, we assume $Cr^{-\alpha}\leq 1$. Let
\begin{equation*}
\tilde{\mathcal{B}}:=\left\{(p,M)\in \mathbb{R}^2\times \mathcal{S}^2: |p|\leq 1, \quad |M|\leq Cr^{-1},\quad \mathcal{K}\leq 0\right\}\subset
\tilde{\mathcal{B}}_1.
\end{equation*}
Then
\begin{equation*}
\left\{(Du(x),D^2u(x)): x\in B_{r}(x_0)\right\}\subset \tilde{\mathcal{B}}.
\end{equation*}
In addition, by \Cref{le2.5},
\begin{equation*}
|F(p,M)-F(q,M)|\leq C|M||p-q|\leq Cr^{-1}|p-q|\quad \mbox{in}~~\tilde{\mathcal{B}}.
\end{equation*}

Hence, up to a proper extension, $u$ is a solution of a fully nonlinear uniformly elliptic equation with ellipticity constant $\mu=Cr^{-1}$. By \cref{e2.15} and the interior $C^{2,\alpha}$ regularity (see \Cref{le2.2}) for $u$ in $B_{r}(x_0)$, we have
\begin{equation*}
\begin{aligned}
|D^2u(x_0)|\leq \frac{C}{r^2}\|u\|_{L^{\infty}(B_r(x_0))}\leq Cr^{-1-\alpha},
\end{aligned}
\end{equation*}
where $C$ is independent of $r$. Therefore,
\begin{equation*}
|D^2u(x)|\leq  C|x|^{-1-\alpha},~\forall ~x\in \bar{B}_R^c.
\end{equation*}
\qed\bigskip

\section{Solvability of the Weingarten equation in a bounded convex domain}\label{S3}
In this section we obtain the solvability of the Weingarten equation for general strictly convex, bounded domains.

Along this section we search for a function $u$ defined in a strictly convex domain $\Omega$ whose graph satisfies
\begin{equation}\label{e5.1bis}
  \left\{
  \begin{aligned}
    &\kappa_2=f(\kappa_1)&&\quad\mbox{in}~~\Omega,\\
    &u=\varphi&&\quad\mbox{on}~~\partial \Omega,
  \end{aligned}
  \right.
\end{equation}
for a certain function $f\in C^{0,1}_{loc}[0,+\infty)$ satisfying \cref{e1.6-2}--\cref{e1.6-t}.

The theorem will be proved by means of the method of continuity, which is divided into several steps.

First, we observe that if $u$ is a solution of \cref{e5.1bis} then its graph is saddle, that is, has non positive Gaussian curvature. Thus, from \cite{MR3069540,zbMATH02587131} (see also \cite[p. 311]{MR1814364}) one has:
\begin{lemma}\label{le5.1}
Let $u\in C^{2}(\bar{\Omega})$ be a solution of \cref{e5.1bis}, where $\Omega$ is a  strictly convex, bounded, $C^2$ domain and $\varphi \in C^{2}(\partial \Omega)$. Then
\begin{equation}\label{e3.0}
\|u\|_{C^{0,1}(\bar{\Omega})}\leq C,
\end{equation}
where $C$ depends only on $\min \kappa_{\partial \Omega}$ and $\|\varphi\|_{C^{2}(\partial \Omega)}$.
\end{lemma}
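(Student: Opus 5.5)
Since $\mathcal{K}=\kappa_1\kappa_2\le 0$ for any solution of \cref{e5.1bis}, I will use only the saddle property and not the particular relation $\kappa_2=f(\kappa_1)$. The proof has two parts: a sup bound from a maximum principle, and a gradient bound obtained by first controlling $Du$ on $\partial\Omega$ with barriers and then transferring that control to the interior.

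\emph{Sup bound.} Because $\det D^2u\le 0$ in $\Omega$, the Hessian of $u$ is nowhere definite. If $u-\ell$ attained an interior strict maximum for some affine $\ell$, the Hessian there would be negative semidefinite, and together with $\det D^2u\le 0$ this forces a degenerate direction. To handle this cleanly I will perturb by $\varepsilon|x|^2$, which converts the problem into one where the maximum principle applies strictly, and then let $\varepsilon\to0$. This shows that $u$ has no strict interior local extremum relative to affine functions: the graph of $u$ lies in the convex hull of its boundary curve, and $u-\ell$ attains its extrema on $\partial\Omega$ for every affine $\ell$. Taking $\ell=0$ gives $\|u\|_{L^\infty}\le\max|\varphi|$.

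\emph{Boundary gradient.} Fix $x_0\in\partial\Omega$. Because $\Omega$ is strictly convex with curvature at least $\min\kappa_{\partial\Omega}$, I will build affine barriers at $x_0$. Extend $\varphi$ to $\mathbb{R}^2$ with $C^2$ norm controlled by $\|\varphi\|_{C^2(\partial\Omega)}$. The candidate upper barrier is $\ell^+(x)=\varphi(x_0)+D\varphi(x_0)\cdot(x-x_0)+K\,\nu\cdot(x_0-x)$, where $\nu$ is the outer normal at $x_0$. Uniform convexity gives $\nu\cdot(x_0-x)\ge c\,|x-x_0|^2$ on $\partial\Omega$ with $c\sim\min\kappa_{\partial\Omega}$, so for $K\sim\|\varphi\|_{C^2}/\min\kappa_{\partial\Omega}$ the second-order Taylor error is absorbed and $\ell^+\ge\varphi$ on $\partial\Omega$. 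The lower barrier $\ell^-$ is built the same way. Applying the affine maximum principle from the first step to $u-\ell^{\pm}$ yields $\ell^-\le u\le\ell^+$ in $\Omega$, with equality at $x_0$. The tangential derivative of $u$ at $x_0$ is the derivative of $\varphi$, and the normal derivative is now pinched, so $|Du(x_0)|\le C$.

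\emph{Interior gradient.} Here I will use the saddle property again. For any direction $e$ and point $y$, the function $u-Du(y)\cdot x$ has no interior strict extremum, and I want to convert this into control of the gradient image. Since $D^2u$ is indefinite, the gradient map $Du$ is orientation reversing or degenerate, and the claim is $Du(\Omega)\subset\overline{\mathrm{conv}}\,Du(\partial\Omega)$. The cleanest route is to argue that each component $u_e=Du\cdot e$ attains its maximum on $\partial\Omega$. Suppose instead that $\max_{\bar\Omega}|Du|$ is attained at an interior point $y$ with $p=Du(y)$. Then $w=u-p\cdot x$ has a critical point at $y$ with $\det D^2w\le0$. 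Since $w$ is not strictly extremal there, the level set $\{w=w(y)\}$ contains arcs through $y$ reaching $\partial\Omega$, so there are boundary points where the slope along the direction of $p$ is at least $|p|$. This is the classical argument for saddle surfaces, and it is the step I expect to be the main obstacle. If the direct topological argument becomes delicate, I will instead cite the known result for saddle graphs from \cite{MR1814364} and \cite{MR3069540}, which bounds the interior gradient by the boundary gradient. Combining the three steps gives \cref{e3.0}.
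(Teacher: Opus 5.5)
Your Steps 1 and 2 are correct. At an interior maximum of $u-\ell+\varepsilon|x|^2$ one would have $D^2u\le-2\varepsilon I$, which contradicts $\det D^2u\le 0$. So $u-\ell$ obeys the maximum and minimum principle on every subdomain, and the affine barriers then give $|u(x)-u(x_0)|\le K|x-x_0|$ for $x\in\Omega$, $x_0\in\partial\Omega$. The paper's own proof is only the saddle observation plus a citation of von Neumann, Radó and \cite[p.~311]{MR1814364}, so your fallback matches it.

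However, the direct argument you give for the interior step, which is the real content of the lemma, does not work. An arc of $\{w=w(y)\}$ from $y$ to $z\in\partial\Omega$ only yields $u(z)-u(y)=p\cdot(z-y)$. This gives $|p|\,|\cos\angle(p,z-y)|\le K$, and nothing forces $z-y$ to point roughly along $p$. The identity $D^2u(y)p=0$ at an interior maximum of $|Du|$ is infinitesimal information at $y$; it says nothing about where the arcs land on $\partial\Omega$.

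Radó's tangent-plane argument needs two further ingredients:
\begin{itemize}
\item at least four alternating sign sectors of $w-w(y)$ at $y$, so that the tangent plane meets the boundary curve in at least three points;
\item the three-point condition on the data.
\end{itemize}
The first comes from the elliptic equation, not from $\det D^2u\le 0$. For instance, $w=x_1^3$ is a saddle function with a critical point at $0$ and only two sign regions. So the step cannot be run from the saddle property alone in the way you sketch.

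The missing idea is translation invariance (von Neumann's lemma). Since the equation has the form $F(Du,D^2u)=0$, the function $u_h:=u(\cdot+h)$ solves it on $\Omega_h:=\Omega\cap(\Omega-h)$. Then $v=u_h-u$ satisfies a linear uniformly elliptic equation $a^{ij}v_{ij}+b^iv_i=0$ with no zeroth-order term, as in \cref{e2.1}, using \Cref{le2.6} and $u\in C^2(\bar\Omega)$. Hence $\max_{\bar\Omega_h}|v|$ is attained on $\partial\Omega_h$, where $x$ or $x+h$ lies on $\partial\Omega$. Your Step 2 then gives $|u(x+h)-u(x)|\le K|h|$, that is, $\|Du\|_{L^\infty(\Omega)}\le K$. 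Applied to difference quotients in a fixed direction $e$, the same argument also justifies your claim that $u_e$ attains its maximum on $\partial\Omega$.
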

A key ingredient is the following curvature estimate.
\begin{lemma}\label{le5.2}
Let $u\in C^{2}(\bar{\Omega})$ be a solution of \cref{e5.1bis}. Suppose that $\Omega$ is a strictly convex, bounded, $C^{2,\alpha}$  domain, and $\varphi \in C^{2,\alpha}(\partial \Omega)$ for some $0<\alpha<1$. Then the second fundamental form of the graph of $u$ is bounded, i.e.,
\begin{equation}\label{e3.2}
\|\sigma\|_{L^{\infty}(\bar{\Omega})}\leq C,
\end{equation}
where $C$ depends only on $\Lambda$, $\alpha$, $\max \kappa_{\partial \Omega}$, $\min \kappa_{\partial \Omega}$, $\|\partial \Omega\|_{C^{2,\alpha}}$ and $\|\varphi\|_{C^{2,\alpha}(\partial \Omega)}$.
\end{lemma}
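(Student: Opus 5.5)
Since the graph has $\mathcal{K}\le 0$, \eqref{ec1} gives $|\sigma|^2\le \frac{2}{\Lambda}|\mathcal{K}|$. It therefore suffices to bound $|\sigma|$ separately in the interior and near the boundary, using \Cref{le5.1} throughout to get $|Du|\le C$; this gives a uniform bound on the slope of the graph. The plan is to bound $|\sigma|$ at interior points by \Cref{le2.1} via a blow-up/compactness argument, and then obtain a boundary curvature bound by a barrier-type argument with a linearized operator.

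\textbf{Interior.} Because $|Du|\le C$, the Gauss map of the graph lies in a fixed compact subset of the open upper hemisphere. So \Cref{le2.1} applies to the graph, with boundary $\partial\Sigma$ the curve $(x,\varphi(x))$, $x\in\partial\Omega$. It yields
\[
|\sigma(p)|\le \frac{C}{d(p,\partial\Sigma)} \le \frac{C}{\operatorname{dist}(x,\partial\Omega)},
\]
where the intrinsic distance dominates the horizontal distance. Hence $|\sigma|$ is bounded on $\{\operatorname{dist}(x,\partial\Omega)\ge\delta\}$ for any fixed $\delta>0$. The remaining task is a bound in a collar $\Omega_\delta=\{\operatorname{dist}(x,\partial\Omega)<\delta\}$.

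\textbf{Near the boundary.} I would argue by contradiction with a blow-up. Suppose there are solutions $u_k$ (with data uniformly controlled) and points $x_k$ with $|\sigma_k(x_k)|=\max|\sigma_k|\to\infty$; by the interior bound, $x_k$ tends to $\partial\Omega$. Rescale the graphs by $\lambda_k=|\sigma_k(x_k)|$ around $(x_k,u_k(x_k))$. The rescaled surfaces have $|\sigma|\le 1$ with equality at the origin, and their Gauss maps stay in a fixed compact cap, so they are graphs with $|D\tilde u_k|\le C$ and $|D^2\tilde u_k|\le C$. Rescaling does not preserve the Weingarten relation, but it gives $\kappa_2=f_k(\kappa_1)$ with $f_k(t)=\lambda_k f(t/\lambda_k)$, which still satisfies \eqref{e1.6-2} and $f_k(0)=0$. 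By \Cref{le2.6} the rescaled functions solve uniformly elliptic equations with uniform constants. The rescaled boundaries $\lambda_k(\partial\Omega-x_k)$ converge to a line, and the boundary data converge to an affine function, since the $C^{2,\alpha}$ norms scale away.

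\textbf{Boundary regularity and the limit.} The boundary $C^{2,\alpha}$ estimates for fully nonlinear uniformly elliptic equations with $C^{2,\alpha}$ data give uniform $C^{2,\beta}$ bounds up to the flat boundary, after controlling the distance $\lambda_k\operatorname{dist}(x_k,\partial\Omega)$. If this distance is unbounded, the interior curvature estimate applied after rescaling already gives a contradiction, since $|\sigma|(0)=1$ while the distance to the boundary tends to infinity. So I may assume it stays bounded. Passing to the limit produces a Weingarten graph of minimal type over a half-plane. Its limit function $f_\infty$ is homogeneous, so the limit equation is $\kappa_2=-a\kappa_1^{\pm}$-type, and the limit has bounded gradient, affine boundary data, $\mathcal{K}\le0$, and $|\sigma|(0)=1$. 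A Liouville/Bernstein-type statement then forces this limit to be a plane: for example, reflect the graph across the boundary line and apply the curvature estimate of \Cref{le2.1} with $d\to\infty$. This contradicts $|\sigma|(0)=1$.

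The main obstacle I expect is justifying the boundary $C^{2,\beta}$ compactness uniformly. It requires uniform ellipticity up to the boundary, and that holds only once $|D^2u|$ is bounded, which is exactly what the rescaling normalizes. A second obstacle is the Liouville step on the half-plane, where reflection may not preserve the equation. If reflection fails, I would instead first show that the limit function $w$ has $|Dw-Dw(0)|$ small by using the flat-boundary gradient (Hölder) estimate for the linear equation $a^{ij}w_{ij}=0$ (as in the proof of \Cref{le2.3}), and then conclude via the scaled $C^{2,\alpha}$ estimate that $D^2w\equiv0$.
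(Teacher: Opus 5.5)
Your argument is correct in outline and matches the paper's in the boundary case, but it handles the interior case differently. The paper runs one blow-up at the maximum point and splits according to $\lambda_n d(x_n,\partial\Omega_n)$. If this tends to infinity, it blows up to an entire graph $v$ with $|Dv|,|D^2v|\le C$. It rewrites the equation as $a^{ij}v_{ij}=0$ and combines the interior $C^{1,\alpha}$ estimate with $\|v\|_{L^\infty(B_R)}\le CR$ to force $v$ to be affine. If it stays bounded, it performs the boundary blow-up with the boundary $C^{2,\beta}$ estimate of \cite{lian2020pointwise} and the half-space Liouville theorem of \cite{lian2025liouville}. You instead apply the scale-invariant estimate \Cref{le2.1} directly at $p_k=(x_k,u_k(x_k))$: $\lambda_k\le C/d(p_k,\partial\Sigma_k)\le C/\operatorname{dist}(x_k,\partial\Omega_k)$. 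So $\lambda_k\operatorname{dist}(x_k,\partial\Omega_k)\le C$ always, the paper's case (i) never occurs, and your separate interior paragraph becomes redundant. This is shorter for the lemma as stated, but it rests on the deep quasiconformal curvature estimate, which needs the exact Weingarten structure. The paper's case (i) uses only the gradient bound of \Cref{le5.1}, interior $C^{2,\alpha}$ compactness and two-dimensional $C^{1,\alpha}$ theory for linear equations. That is what lets the authors rerun the same proof for solutions of $tF_\varepsilon(Du,D^2u)+(1-t)\Delta u=0$ in the continuity method for \Cref{th5.1}. Those solutions are not Weingarten graphs, so \Cref{le2.1} does not apply to them.

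Several details in your boundary step need fixing.
\begin{itemize}
\item The rescaled relation is $\kappa_2=\lambda_k^{-1}f(\lambda_k\kappa_1)$, not $\lambda_k f(\kappa_1/\lambda_k)$. Its subsequential limit $g$ is only Lipschitz with $g(0)=0$ and $-1/\Lambda\le g'\le-\Lambda$. It need not be homogeneous (take $f'$ oscillating at infinity), though nothing in your argument actually uses homogeneity.
\item Drop the reflection idea. Rotating by $\pi$ about the boundary line sends the normal to its negative along that line. Gluing therefore forces an orientation flip on the reflected piece, which then satisfies $\kappa_2=-g^{-1}(-\kappa_1)$. This differs from $g$ in general: for $g(t)=-at$ it gives $-t/a$.
\item Your fallback is the right argument, and it is exactly the half-space Liouville theorem the paper invokes. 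However, the needed tool is not the exterior-domain estimate from \Cref{le2.3}. You need a flat-boundary $C^{1,\alpha}$ estimate for $a^{ij}w_{ij}=0$ with bounded measurable coefficients: Krylov's boundary estimate applied to $w-\ell$, where $\ell$ is the affine boundary datum, combined with the interior two-dimensional $C^{1,\alpha}$ estimate.
\item Since $w-\ell$ is Lipschitz and vanishes on the line, it grows at most linearly. Scaling then gives $[Dw]_{C^{\alpha}(B_R^+)}\le CR^{-\alpha}\to 0$, so $w$ is affine. This contradicts $|\sigma_w|=1$ at the limit point directly, with no further $C^{2,\alpha}$ step.
\end{itemize}
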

\proof The proof is a standard combination of a blow-up process with the classification of entire solutions. We prove the lemma by contradiction.

Suppose that the conclusion is false. Then there exist positive constants $\Lambda$, $0<\alpha<1$, $K_1\leq K_2$ and $K$ and sequences of $f_n,u_n,\Omega_n,\varphi_n$ such that $u_n$ are solutions of
\begin{equation*}
  \left\{
  \begin{aligned}
    &\kappa_2=f_n(\kappa_1)&&\quad\mbox{in}~~\Omega_n;\\
    &u_n=\varphi_n&&\quad\mbox{on}~~\partial \Omega_n,
  \end{aligned}
  \right.
\end{equation*}
where $f_n$ are uniformly elliptic with $\Lambda$; the $\Omega_n$ are bounded convex domain with
\begin{equation*}
K_1\leq \kappa_{\Omega_n}\leq K_2,~\forall ~n
\end{equation*}
and
\begin{equation*}
\|\partial \Omega_n\|_{C^{2,\alpha}},~ \|\varphi_n\|_{C^{2,\alpha}(\partial \Omega_n)}\leq K.
\end{equation*}
Moreover, there exists a sequence of points $x_n\in \bar \Omega_n$ such that
\begin{equation*}\label{e5.3}
\lambda_n:=|\sigma_n(x_n)|=\max_{x\in \bar{\Omega}_n} |\sigma_n(x)| \geq n,
\end{equation*}
where $\sigma_n$ denotes the second fundamental form of the graph of $u_n$.

In the following, we divide the proof into two cases:~\\[2mm]
(i) Up to a subsequence, $d(x_n,\partial \Omega_n)\cdot \lambda_n\to \infty$.
\\[2mm]
In this case, we use the following blow-up:
\begin{equation*}
y:=\lambda_n(x-x_n), \quad v_n(y):=\lambda_n \left(u_n(x)-u_n(x_n)\right).
\end{equation*}
Then, after the rescaling, $v_n$ is a solution of
\begin{equation}\label{e5.4}
\kappa_2=g_n(\kappa_1)\quad\mbox{in}~~B_{R_n},
\end{equation}
where $R_n=d(x_n,\partial \Omega_n)\cdot \lambda_n\to \infty$ and
\begin{equation*}
g_n(t):=\lambda_n^{-1}f_n(\lambda_n t).
\end{equation*}
Hence, $v_n$ is a uniformly elliptic Weingarten graph of minimal type as well.

Furthermore, with the aid of the Lipschitz estimate for $u_n$, given by \Cref{le5.1}, we have
\begin{equation}\label{e5.5}
v_n(0)=0, \quad |Dv_n(y)|=|Du_n(x)|\leq C,~\forall ~y\in B_{R_n}.
\end{equation}
Moreover, the second fundamental form of $v_n$ is bounded. In fact,
\begin{equation*}
|\sigma_{v_n}(y)|=\lambda_n^{-1} |\sigma_n(x)|\leq 1,~\forall ~y\in B_{R_n}.
\end{equation*}
Hence,
\begin{equation}\label{e3.1}
\|D^2v_n\|_{L^{\infty}(B_{R_n})}\leq C.
\end{equation}
Therefore, \cref{e5.4} is a uniformly elliptic fully nonlinear equation for each $n$ (cf. \Cref{le2.6}).

By the interior $C^{2,\alpha}$ regularity (see \Cref{le2.2}), there exists $0<\beta<1$ such that
\begin{equation*}
\|v_n\|_{C^{2,\beta}(\bar{B}_R)}\leq C\|v_n\|_{L^{\infty}(B_{2R})},~\forall ~0<R<R_n/2,
\end{equation*}
where $C$ depends on $R$ but not on $n$. By \cref{e5.5},
\begin{equation*}
\|v_n\|_{L^{\infty}(B_{2R})}\leq CR.
\end{equation*}
Hence, $\|v_n\|_{C^{2,\beta}(\bar{B}_R)}$ are uniformly bounded for any fixed $R$. Since $R_n\to \infty$, up to a subsequence,  there exists $v\in C^{2,\beta}(\mathbb{R}^2)$ such that $v_n\to v$ in $C^2(\bar{B}_R)$ for any $0<R<\infty$.

Since $g_n(0)=0$ and $|g'_n|$ are uniformly bounded, up to a subsequence, there exists a Lipschitz function $g$ such that $g_n\to g$ uniformly and
\begin{equation*}
 -\frac{1}{\Lambda}\leq g'(t)\leq  -\Lambda,~\forall ~t\geq 0.
\end{equation*}
In addition, from \cref{e5.5} and \cref{e3.1}, we have
\begin{equation}\label{e3.5}
v(0)=0, \quad   \|Dv\|_{L^{\infty}(\mathbb{R}^2)}+\|D^2v\|_{L^{\infty}(\mathbb{R}^2)}\leq C.
\end{equation}
Hence, $v$ is a solution of the following uniformly elliptic equation
\begin{equation*}
  \kappa_2=g(\kappa_1)\quad\mbox{in}~~\mathbb{R}^2.
\end{equation*}

As in the proof of \Cref{le2.3} (see \cref{e2.9}), $v$ can be regarded as a solution of a linear uniformly elliptic equation:
\begin{equation}\label{e3.6}
a^{ij}u_{ij}=0\quad\mbox{in}~~ \mathbb{R}^2.
\end{equation}
From the interior $C^{1,\alpha}$ regularity (see \cite[Theorem 12.4]{MR1814364}), there exists $0<\alpha<1$ such that
\begin{equation*}
|Dv(x)-Dv(y)|\leq C\frac{\|v\|_{L^{\infty}(B_R)}}{R^{1+\alpha}}|x-y|^{\alpha},~\forall ~x,y\in B_{R/2},~\forall ~R>0.
\end{equation*}
By \cref{e3.5}, $\|v\|_{L^{\infty}(B_R)}\leq CR$. Hence, by letting $R\to \infty$ in the above equation, we conclude that $Dv$ is a constant vector. That is, $v$ is a linear function.

On the other hand,
\begin{equation*}
|\sigma_v(0)|=\lim_{n\to \infty} |\sigma_{v_n}(0)|=1,
\end{equation*}
where $\sigma_v$ denotes the second fundamental form of $v$. Therefore, we obtain a contradiction.

Next, we consider the case\\[2mm]
(ii) $d(x_n,\partial \Omega_n)\cdot \lambda_n\leq C_0$ for some positive constant $C_0$.  \\[2mm]
Let $\tilde{x}_n\in \partial \Omega_n$ such that $d(x_n,\partial \Omega_n)=|x_n-\tilde{x}_n|$. Consider a rotation in $\mathbb{R}^2$ with associated orthogonal matrix $A_n$ such that $A_n\nu_n=e_2=(0,1)$, where $\nu_n$ is the interior unit normal of $\partial \Omega_n$ at $\tilde{x}_n$. Then we  can consider the following blow-up:
\begin{equation*}
\begin{aligned}
&y=\lambda_n A_n(x-\tilde{x}_n), \quad \tilde{\Omega}_n=\lambda_n A_n(\Omega_n-\tilde{x}_n),\\
&v_n(y)=\lambda_n \left(u_n(x)-u_n(\tilde{x}_n)\right), \quad
\psi_n(y)=\lambda_n \left(\varphi_n(x)-u_n(\tilde{x}_n)\right).
\end{aligned}
\end{equation*}

As in the proof of the first case, $v_n$ is a solution of
\begin{equation}\label{e5.6}
\kappa_2=g_n(\kappa_1)\quad\mbox{in}~~\tilde{\Omega}_n, \quad v_n=\psi_n\quad\mbox{on}~~\partial \tilde{\Omega}_n,
\end{equation}
where
\begin{equation*}
g_n(t):=\lambda_n^{-1}f_n(\lambda_n t).
\end{equation*}
Similar to case (i), we have
\begin{equation*}\label{e5.7}
v_n(0)=0, \quad |Dv_n(y)|=|Du_n(x)|\leq C,~\forall ~y\in \tilde{\Omega}_n
\end{equation*}
and
\begin{equation*}
|\sigma_{v_n}(y)|\leq 1,~\forall ~y\in \tilde{\Omega}_n.
\end{equation*}
Hence,
\begin{equation*}
\|D^2v_n\|_{L^{\infty}(\tilde{\Omega}_n)}\leq C
\end{equation*}
and \cref{e5.6} are uniformly elliptic equations.

Additionally, note that
\begin{equation*}
\psi_n(0)=0, \quad\|D\psi_n\|_{L^{\infty}(\partial \tilde{\Omega}_n)}=
\|D\varphi_n\|_{L^{\infty}(\partial \Omega_n)}, \quad
 \|D^2\psi_n\|_{L^{\infty}(\partial \tilde{\Omega}_n)}=
 \lambda_n^{ -1}\|D^2\varphi_n\|_{L^{\infty}(\partial \Omega_n)}\to 0
\end{equation*}
and
\begin{equation*}
[D^2\psi_n]_{C^{\alpha}(\partial \tilde{\Omega}_n)}=
 \lambda_n^{-1-\alpha}[D^2\varphi_n]_{C^{\alpha}(\partial \Omega_n)}\to 0.
\end{equation*}
Then by the $C^{2,\alpha}$ regularity (see \cite[Corollary 8.4]{lian2020pointwise}), there exists $0<\beta\leq \alpha$ such that for any $R>0$, if $n$ is large enough, we have
\begin{equation*}
\|v_n\|_{C^{2,\beta}(\bar{B}_R\cap \tilde{\Omega}_n)}\leq C,
\end{equation*}
where $C$ depends on $R$ but not on $n$. Hence, $\|v_n\|_{C^{2,\beta}(\bar{B}_R\cap \tilde{\Omega}_n)}$ are uniformly bounded. 

In addition, for any $R>0$,
\begin{equation*}
\|\partial \tilde{\Omega}_n\cap B_R\|_{C^{2,\alpha}}\to 0\quad\mbox{as}~~n \to \infty.
\end{equation*}
Thus, up to a subsequence,  there exists $v\in C^{2,\alpha}(\overline{\mathbb{R}^2_+})$ such that $v_n\to v$ in $C^2(\bar{B}_R^+)$ for any $0<R<\infty$. Here, $\mathbb{R}^2_+=\{(x_1,x_2)\in\mathbb{R}^2:\ x_2>0\}$ and $B_R^+=B_R\cap\mathbb{R}^2_+$.

Similar to case (i), there exists a Lipschitz function $g$ with
\begin{equation*}
 -\frac{1}{\Lambda}\leq g'(t)\leq  -\Lambda,~\forall ~t\geq 0
\end{equation*}
such that $v$ is a solution of
\begin{equation*}
  \kappa_2=g(\kappa_1)\quad\mbox{in}~~\mathbb{R}^2_+.
\end{equation*}

By the Taylor formula for $\psi_n$, for any $R>0$ and $y\in \partial \tilde{\Omega}_n\cap B_R$, there exists $\xi\in \partial \tilde{\Omega}_n\cap B_R$ such that
\begin{equation*}
\psi_n(y)=D\psi_n(0)\cdot y+y^TD^2\psi_n(\xi)y.
\end{equation*}
Since $|D^2\psi_n|\to 0$, up to a subsequence, there exists $a\in \mathbb{R}$ such that $D_{y_2}\psi_n(0)\to a$. Then
\begin{equation*}
v(y)=ay_1\quad\mbox{on}~~\partial \mathbb{R}^2_+.
\end{equation*}

By applying the Liouville theorem in a half-space to \cref{e3.6} (see \cite[Theorem 2.3]{lian2025liouville}), $v$ must be linear. On the other hand, let
\begin{equation*}
  y_n=\lambda_nA_n(x_n-\tilde{x}_n)=(0,b_n),
\end{equation*}
where $0<b_n\leq C_0$. Up to a subsequence, $b_n\to b\geq 0$.

Let $y_0=(0,b)$, then
\begin{equation*}
|\sigma_v(y_0)|=\lim_{n\to \infty} |\sigma_{v_n}(y_n)|=1.
\end{equation*}
Therefore, we obtain a contradiction again.~\qed\bigskip

Now we can give the~\\
\noindent\textbf{Proof of \Cref{th5.1}.} We use the method of continuity to prove the existence. Since the fully nonlinear elliptic operator (see \cref{e4.2}) is not smooth, we first consider its mollification:
\begin{equation*}
  F_{\varepsilon}(p,M):=\int_{\mathcal{B}_1} F(p+\varepsilon q, M+\varepsilon N) \eta(q,N)~ dq dN,
\end{equation*}
where $\mathcal{B}_1$ is the unit ball (see \cref{e2.13} for the definition) and $\eta$ is a mollifier. That is, $\eta\in C^{\infty}$ is supported in $\mathcal{B}_1$ and $\int_{\mathcal{B}_1}\eta =1$. Then $F_{\varepsilon}\in C^{\infty}$.

Note that \Cref{le5.1} holds as well if we replace $F$ by $tF_{\varepsilon}+(1-t)\Delta$ (see \cite[p. 311]{MR1814364}), where $t\in [0,1]$ and $\Delta$ is the Laplace operator. Moreover, the constant $C$ in \cref{e3.0} has the same dependence and, in particular, is independent of $t$ and $\varepsilon$. In addition, it is also true for \Cref{le5.2}. We explain it in the following.

We adopt the same proof as in \Cref{le5.2}. In the procedure of blow-up, $v_n$ is a solution of the following equation (instead of \cref{e5.4}):
\begin{equation}\label{e3.3}
tF_{\varepsilon,n}(Dv_n, D^2v_n)+(1-t)\Delta v_n=0 \quad\mbox{in}~~B_{R_n},
\end{equation}
where
\begin{equation}\label{e3.7}
F_{\varepsilon,n}(p,M):=\lambda_n^{-1}F_{\varepsilon}(p,\lambda_n M).
\end{equation}
The key is that \cref{e3.3} is uniformly elliptic if $|Dv_n|$ and $|D^2v_n|$ are bounded. Indeed, by \cref{e2.12}, we know that $F_M$ and $F_p$ are $0$-homogenous and $1$-homogenous in $M$ respectively. Hence,
\begin{equation*}
\left|\frac{\partial F_{\varepsilon,n}}{\partial M}(p,M)\right|=\left|\int \frac{\partial F}{\partial M} (p+\varepsilon q, \lambda_n M+\varepsilon N) \eta(q,N)~ dq dN\right|\leq C,
\end{equation*}
where $C$ depends only on $|p|$ and is independent of $|M|$ and $\lambda_n$. In addition,
\begin{equation*}
\left|\frac{\partial F_{\varepsilon,n}}{\partial p}(p,M)\right|=\lambda_n^{-1}\left|\int \frac{\partial F}{\partial p} (p+\varepsilon q, \lambda_n M+\varepsilon N) \eta(q,N)~ dq dN\right|\leq C,
\end{equation*}
where $C$ depends only on $|p|$ and $|M|$. Therefore, \cref{e3.3} is uniformly elliptic if $|Dv_n|$ and $|D^2v_n|$ are bounded. The rest proof is the same as that of \Cref{le5.2}.

Next, we use the method of continuity to show the existence of solution of
\begin{equation}\label{e3.8}
  \left\{
  \begin{aligned}
    &F_{\varepsilon}(Du,D^2u)=0&&\quad\mbox{in}~~\Omega,\\
    &u=\varphi&&\quad\mbox{on}~~\partial \Omega.
  \end{aligned}
  \right.
\end{equation}
Consider the following Dirichlet problem:
\begin{equation}\label{e3.10}
  \left\{
  \begin{aligned}
    &tF_{\varepsilon}(Du,D^2u)+(1-t)\Delta u=0&&\quad\mbox{in}~~\Omega,\\
    &u=\varphi&&\quad\mbox{on}~~\partial \Omega.
  \end{aligned}
  \right.
\end{equation}
Let
\begin{equation*}
T=\left\{t\in [0,1]: \cref{e3.10} ~~\mbox{has a solution}~~ u_t\in C^{2,\beta}(\bar{\Omega})~~\mbox{for}~~t\right\},
\end{equation*}
where $\beta\leq \alpha$ is to be specified later. Clearly, $0\in T$. In the following, we show that $T$ is both open and closed.

We first show that $T$ is open, which is a consequence of the linear Schauder theory and the implicit function theorem. Suppose that $t_0\in T$. Then there exists a solution $u_0$ of \cref{e3.10}. Let
\begin{equation*}
C^{2,\beta}_0(\bar{\Omega}):=\left\{u\in C^{2,\beta}(\bar{\Omega}):~~u=0\quad\mbox{on}~~\partial \Omega\right\}.
\end{equation*}
Define an operator $\phi: C^{2,\beta}_0(\bar{\Omega})\times \mathbb{R}\to C^{\beta}(\bar{\Omega})$ as follows:
\begin{equation*}
\phi(u,t):=tF_{\varepsilon}(Du_0+Du,D^2u_0+D^2u)+(1-t)\Delta (u_0+u).
\end{equation*}
Clearly, $\phi(0,t_0)=0$. Now, we prove that $D_u\phi (0,t_0):C^{2,\beta}_0(\bar{\Omega})\to C^{\beta}(\bar{\Omega})$ is an invertible bounded linear operator. Indeed, by a direct calculation,
\begin{equation}\label{e3.11}
\begin{aligned}
\langle D_u\phi (0,t_0), w\rangle=&tF_{\varepsilon,M_{ij}}(Du_0,D^2u_0)w_{ij}+tF_{\varepsilon,p_{i}}(Du_0,D^2u_0)w_{i}\\
&+(1-t)\Delta w,~\forall ~w\in C^{2,\beta}_0(\bar{\Omega}).
\end{aligned}
\end{equation}
Let
\begin{equation*}
a^{ij}:= tF_{\varepsilon,M_{ij}}(Du_0,D^2u_0)+(1-t)\delta^{ij}, \quad
b^i=tF_{\varepsilon,p_{i}}(Du_0,D^2u_0).
\end{equation*}
Then \cref{e3.11} can be rewritten as
\begin{equation*}
\langle D_u\phi (0,t_0), w\rangle=a^{ij}w_{ij}+b^iw_i.
\end{equation*}

Since $F_{\varepsilon}$ is smooth and $u_0\in C^{2,\beta}(\bar{\Omega})$, we have $a^{ij},b^i\in C^{\beta}(\bar{\Omega})$. By the classical Schauder theory for linear uniformly elliptic equation (see \cite[Theorem 6.14]{MR1814364}), for any $h\in C^{\beta}(\bar{\Omega}), $ there exists a unique solution $w\in C^{2,\beta}
(\bar{\Omega})$ of
\begin{equation*}
\left\{\begin{aligned}
&a^{ij}w_{ij}+b^iw_i=h&& ~~\mbox{in}~~\Omega;\\
&w=0&& ~~\mbox{on}~~\partial \Omega
\end{aligned}\right.
\end{equation*}
and
\begin{equation*}
\|w\|_{C^{2,\beta}(\bar{\Omega})}\leq C\|h\|_{C^{\beta}(\bar \Omega)}.
\end{equation*}
Thus, $D_u\phi (0,t_0)$ is an invertible linear operator from $C^{2,\beta}_0(\bar{\Omega})$ to $C^{\beta}(\bar{\Omega})$. By the implicit function theorem, there exists a neighborhood $(t_0-\varepsilon,t_0+\varepsilon)$ of $t_0$ such that for any $t\in (t_0-\varepsilon,t_0+\varepsilon)$, there exists a unique $u\in C^{2,\beta}_0(\bar{\Omega})$ such that $\phi(u,t)=0$. In other words, $u_0+u$ is a solution of \cref{e3.10}. Therefore, we have proved that $T$ is open, i.e., if $t_0\in T$, there exists a neighborhood $(t_0-\varepsilon,t_0+\varepsilon)\cap [0,1]\subset T$.

Next, we prove that $T$ is closed. Given a sequence of $t_m\in T$ with $t_m\to \bar t$, we need to prove that $\bar t\in T$. By the definition of $T$, there exists a sequence of solutions $u_m$ of \cref{e3.10} with $t=t_m$ there. By \Cref{le5.1} and \Cref{le5.2}, we have
\begin{equation*}\label{e5.8}
\|u_m\|_{C^{2}(\bar{\Omega})}\leq C,
\end{equation*}
where $C$ is independent of $m$. Then the equation in \cref{e3.10} is uniformly elliptic with ellipticity constant independent of $m$. By the global $C^{2,\alpha}$ regularity (see \cite[Corollary 8.4]{lian2020pointwise}), $u_m\in C^{2,\beta}(\bar{\Omega})$ for some $0<\beta\leq \alpha$ and
\begin{equation*}
\|u_m\|_{C^{2,\beta}(\bar{\Omega})}\leq C.
\end{equation*}

Since $C^{2,\beta}(\bar{\Omega})$ is compactly embedded into $C^{2}(\bar{\Omega})$, there exists $\bar{u}\in C^{2,\beta}(\bar{\Omega})$ such that (up to a subsequence)
\begin{equation*}
u_m\to \bar{u}\quad\mbox{in}~~C^{2}(\bar{\Omega})
\end{equation*}
Take the limit in
\begin{equation*}
\left\{\begin{aligned}
&t_mF_{\varepsilon}(Du_m,D^2u_m)+(1-t_m)\Delta u_m=0&& ~~\mbox{in}~~\Omega;\\
&u_m=\varphi&& ~~\mbox{on}~~\partial \Omega.
\end{aligned}\right.
\end{equation*}
Then we have
\begin{equation*}
\left\{\begin{aligned}
&\bar tF_{\varepsilon}(D\bar{u}, D^2\bar{u})+(1-\bar t)\Delta \bar u=0&& ~~\mbox{in}~~\Omega;\\
&\bar u=\varphi&& ~~\mbox{on}~~\partial \Omega.
\end{aligned}\right.
\end{equation*}
That is, $\bar{u}$ is a solution and $\bar{t}\in T$.

From the above arguments, we have shown that $T$ is both open and closed. Therefore, $1\in A$. That is, there exists a solution of \cref{e3.8}.

We have proved that for any $0<\varepsilon<1$, there exists a solution $u_{\varepsilon}\in C^{2,\beta}(\bar{\Omega})$ of \cref{e3.8} and
\begin{equation*}
\|u_{\varepsilon}\|_{C^{2,\beta}(\bar{\Omega})}\leq C,
\end{equation*}
where $C$ is independent of $\varepsilon$. Then there exists $u\in C^{2,\beta}(\bar{\Omega})$ such that (up to a subsequence)
\begin{equation*}
u_{\varepsilon}\to u\quad\mbox{in}~~C^{2}(\bar{\Omega})
\end{equation*}
and
\begin{equation*}
\|u\|_{C^{2,\beta}(\bar{\Omega})}\leq C.
\end{equation*}

Take the limit in
\begin{equation*}
\left\{\begin{aligned}
&F_{\varepsilon}(Du_{\varepsilon},D^2u_{\varepsilon})=0&& ~~\mbox{in}~~\Omega;\\
&u_{\varepsilon}=\varphi&& ~~\mbox{on}~~\partial \Omega.
\end{aligned}\right.
\end{equation*}
Then we have
\begin{equation*}
\left\{\begin{aligned}
&F(Du, D^2u)=0&& ~~\mbox{in}~~\Omega;\\
&u=\varphi&& ~~\mbox{on}~~\partial \Omega.
\end{aligned}\right.
\end{equation*}
That is, $u$ is a solution of \cref{e5.1bis}.~\qed\bigskip

\section{The constant sign property}\label{S4}

We now consider an end of a uniformly elliptic Weingarten surface of minimal type $\Sigma$. We have previously shown that if this end has finite total curvature, then it may be represented as the graph of a function $u$ defined on the exterior of a ball $B_R \subset \mathbb{R}^2$, whose limit unit normal is $N_{\infty} = (0,0,1)$, or equivalently, such that $\lim_{x \to \infty} |Du| = 0$.

Our goal in this section is to prove \Cref{th1.1}; that is, we aim to show that, up to a vertical translation, either $u > 0$ or $u < 0$ in $B_R^c$, and that the limit
\[
u_{\infty} := \lim_{x \to \infty} u(x) \in \mathbb{R} \cup \{\pm\infty\}
\]
exists.

The proof is inspired by \cite[Theorem 1.11]{LZ_asym} (see also \cite{MR4201294}).\\[2mm]
\noindent\textbf{Proof of \Cref{th1.1}.} Using \Cref{th5.1} and noting $u\in C^{2,\alpha}$ (see \Cref{re2.2}), we can consider the unique solution $v_n$ to the problem
\begin{equation*}
  \left\{
  \begin{aligned}
    \kappa_2&=f(\kappa_1)\quad&&\mbox{in}~~B_n,\\
    v_n&=u\quad&&\mbox{on}~~\partial B_n.
  \end{aligned}
  \right.
\end{equation*}
Then there exist infinite $n$ such that
\begin{equation}\label{e7.3-2}
\max_{x\in \partial B_R}(v_n(x)-u(x))\geq 0
\end{equation}
or
\begin{equation}\label{e7.9-2}
\min_{x\in \partial B_R}(v_n(x)-u(x))\leq 0.
\end{equation}

Let us suppose that case \cref{e7.3-2} holds (case \cref{e7.9-2} can be treated similarly). Let $\Sigma_u$, $\Sigma_n$ be the graphs associated to $u,v_n$, respectively. By translating $\Sigma_n$ vertically downward if necessary, we may assume that
$$
\max_{x \in \partial B_R} (v_n(x) - u(x)) = 0
\quad \text{and so} \quad
\max_{x \in \partial B_n} (v_n(x) - u(x)) \leq 0.
$$
That is, $\Sigma_n$ is below $\Sigma_u$ at every point $x\in\partial B_R\cup\partial B_n$, and $\Sigma_n$ intersects $\Sigma_u$ at a certain point $(x_n,u(x_n))=(x_n,v_n(x_n))$ with $x_n\in\partial B_R$.

Thus, from the maximum principle,
\begin{equation}\label{vnun}
v_n(x)\leq u(x)\qquad\forall x\in\bar{B}_n-B_R.
\end{equation}

\Cref{le2.1} guarantees the existence of curvature estimates. Therefore, from \cite[Proposition 2.3]{MR2669367}, there exists a constant $\delta > 0$ such that, for every point $p \in \Sigma_n$ whose distance to the boundary is greater than $r > 0$, there is a neighborhood of $p$ in $\Sigma_n$ which can be represented as the graph of a function $w_p$ over the Euclidean disk $D_{\delta}$ of radius $\delta$, centered at $p$ in the tangent plane to $\Sigma_n$ at $p$. Moreover,
$$
\|w_p\|_{C^2(D_{\delta})}\leq\delta.
$$
Here, the positive numbers $\delta$ and $ r$ depend only on $\Lambda$, but not on $n$ or $p$.

By the interior $C^{2,\alpha}$ regularity (see \Cref{le2.2}), there exists $0<\beta<1$ such that $w_p\in C^{2,\beta}(\bar{B}_{\delta/2})$ and $\|w_p\|_{C^{2,\beta}(\bar{B}_{\delta/2})}$ is uniformly bounded. On the other hand, up to a subsequence, we may assume that the points $(x_n,u(x_n))\in\Sigma_u\cap\Sigma_n$ converge to some point $(x_0,u(x_0)) \in \Sigma_u$, with $x_0\in\partial B_R$, and that their Gauss maps satisfy $N_n(x_n,u(x_n)) \to N_0 \in \mathbb{S}^2$, where $N_n$ denotes the Gauss map of $\Sigma_n$.

From here, by the Arzela-Ascoli theorem,
the surfaces $\Sigma_n$ converge, up to a subsequence, in the $C^2$-norm on compact sets to a complete
surface $\Sigma_0$. But, from \cref{e1.3}, its second fundamental form vanishes identically, that is, $\Sigma_0$ must be a plane.

Therefore, from \cref{vnun}, the plane $\Sigma_0$ is below $\Sigma_u$. Moreover, observe that $\Sigma_0$ must be horizontal since $\lim_{x \to \infty} |Du| = 0$.
Thus, up to a vertical translation, we obtain \cref{e1.12}.

So, $u$ is a positive (or negative) solution of the uniformly elliptic linear equation \cref{e2.9}. Then, the   existence of $u_{\infty} := \lim_{x \to \infty} u(x) \in \mathbb{R} \cup \{\pm\infty\}$ can be now deduced (see, for instance, \cite[Lemma 2.1]{LZ_asym}) as a standard consequence of the interior Harnack inequality (see \cite[Corollary 9.25.]{MR1814364}).~\qed\bigskip

 \section{Proof of main results: the case \texorpdfstring{$f'(0)=-1$}{f'(0)=-1}}\label{S5}
In this section, we focus on the case where the relation between the principal curvatures is symmetric, i.e.,
$f'(0)=-1$. We derive the expansion \cref{e1.10} and establish \Cref{th1.2} and \Cref{co1.1}.

When $f'(0)=-1$, the corresponding fully nonlinear operator $F\in C^{1,1}$ is smooth (see \Cref{le2.5}). We aim to show that, in this situation, the asymptotic behavior of the solution coincides with that of harmonic functions. In fact, we prove a more general result that provides an expansion for general fully nonlinear uniformly elliptic equations with smooth operators.

 \begin{theorem}\label{th2.1}
Let $u$ be a solution of
\begin{equation*}\label{e3.4}
F(Du,D^2u)= 0\quad\mbox{in}~~\bar{B}_R^c,
\end{equation*}
where $F\in C^{1,1}$ is uniformly elliptic and $F(p,0)\equiv 0$. Suppose that
\begin{equation}\label{e2.25}
\|Du\|_{L^{\infty}(B_R^c)}\leq C, \quad |D^2u(x)|\leq C|x|^{-1},~\forall ~x\in B_R^c.
\end{equation}

Then there exist $b\in \mathbb{R}^2$, $c,d\in \mathbb{R}$ and a positively definite diagonal matrix $P$ such that
\begin{equation}\label{e1.4}
u(x)=b\cdot x+d\log |Px|+c+O(|x|^{-\alpha}),~\forall ~x\in \bar{B}_R^c, ~\forall ~0<\alpha<1
\end{equation}
and
\begin{equation*}\label{e1.4-2}
|Du(x)-b|= O(|x|^{-1}), \quad
|D^2u(x)|= O(|x|^{-2}),~\forall ~x\in \bar{B}_R^c.
\end{equation*}
 \end{theorem}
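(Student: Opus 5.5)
Let $b:=\lim_{x\to\infty}Du(x)$. To see that this limit exists, I will first bootstrap the decay of $D^2u$. Since $F(p,0)\equiv 0$, $u$ solves the linear equation $a^{ij}(x)u_{ij}=0$ with $a^{ij}(x)=\int_0^1F_{M_{ij}}(Du,tD^2u)\,dt$, exactly as in \cref{e2.9}. The coefficients satisfy the following bound:
\begin{equation*}
|a^{ij}(x)-F_{M_{ij}}(Du(x),0)|\leq C|D^2u(x)|\leq C|x|^{-1},
\end{equation*}
because $F\in C^{1,1}$. Moreover, $D_k u$ solves the differentiated equation $F_{M_{ij}}(Du,D^2u)D_{ij}(D_ku)+F_{p_i}(Du,D^2u)D_i(D_ku)=0$, and $|F_{p_i}(Du,D^2u)|=|F_{p_i}(Du,D^2u)-F_{p_i}(Du,0)|\leq C|x|^{-1}$ since $F_p(p,0)\equiv0$. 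Applying the two-dimensional Harnack/oscillation decay (as in the proof of \Cref{le2.3}) to $D_ku$, which is bounded, shows that $Du$ has a limit $b$ at infinity with $|Du-b|\leq C|x|^{-\alpha_0}$. Rescaling on $B_{|x|/2}(x)$ with \Cref{le2.2} then gives $|D^2u|\leq C|x|^{-1-\alpha_0}$.

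Next I will freeze the coefficients at infinity. Set $A:=F_M(b,0)$, a constant positive definite matrix. After a linear change of variables $y=Qx$ with $QAQ^T=I$ (choosing $Q$ via a rotation and a diagonal scaling, which is where the diagonal $P$ originates), the function $w:=u-b\cdot x$ satisfies
\begin{equation*}
\Delta_y w=g,\qquad |g(y)|\leq C\big(|Du-b|+|D^2u|\big)|D^2u|\leq C|y|^{-2-2\alpha_0},
\end{equation*}
using $F\in C^{1,1}$ and $F(p,0)=0$ (writing $0=F(Du,D^2u)-F(b,0)$ and Taylor expanding). The right-hand side $g$ is therefore integrable at infinity with room to spare.

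To obtain the expansion, I will solve the Poisson equation in the exterior domain, by a Newtonian potential $N g$ truncated outside $B_{2R}$. This gives $w=h+Ng$, where $h$ is harmonic outside a ball and has at most logarithmic growth (recall $w=o(|x|)$). The standard expansion of exterior harmonic functions in the plane, $h=d\log|y|+c+O(|y|^{-1})$, combined with $Ng=\frac{1}{2\pi}\big(\int g\big)\log|y|+O(|y|^{-2\alpha_0}\log|y|)$, yields a first expansion $w=d\log|y|+c+O(|y|^{-\gamma})$.

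This error is then fed back. With $w$ now known, $|Dw|=O(|x|^{-1})$ and $|D^2w|=O(|x|^{-2})$ follow from \Cref{le2.2} applied to $w-d\log|y|-c$ on dyadic balls; the scaling argument used in \Cref{le2.3} transfers to $w$ since $F(Du,\cdot)$ with $Du\approx b$ is uniformly elliptic. This improves $g$ to $O(|y|^{-4}\log|y|)$, and the potential estimate then gives an error $O(|y|^{-\alpha})$ for every $\alpha<1$; the loss of $\alpha=1$ comes from the $\log$ factors. Finally, $\log|Qx|$ is replaced by $\log|Px|$ with $P$ diagonal after a rotation of coordinates, and the $\log$ term is written accordingly.

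The main obstacle is the first step: proving that $Du$ converges at a rate, which requires a decay lemma for linear equations whose drift is only $O(|x|^{-1})$. That drift is scale-critical, so the Harnack constants are uniform under dyadic rescaling; this is what I will rely on.
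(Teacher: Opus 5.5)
Your overall architecture matches the paper's: you write the equation as $a^{ij}u_{ij}=0$, freeze the coefficients at $F_M(b,0)$, normalize to the Laplacian, split off a potential, expand the exterior harmonic part, and feed back. Two steps, however, fail as written.

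\textbf{The bound on $g$ is off by a full power of $|x|$.} You correctly get $|g|\le C(|Du-b|+|D^2u|)|D^2u|$. But with $|Du-b|\le C|x|^{-\alpha_0}$ and $|D^2u|\le C|x|^{-1-\alpha_0}$ the cross term dominates:
\begin{equation*}
|g|\leq C|x|^{-\alpha_0}\cdot |x|^{-1-\alpha_0}=C|x|^{-1-2\alpha_0},
\end{equation*}
not $C|x|^{-2-2\alpha_0}$. Nothing forces $2\alpha_0>1$, so $g$ need not be integrable at infinity and its logarithmic potential need not converge. Your one-shot decomposition $w=h+Ng$ therefore collapses.

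This is the missing idea: the paper bootstraps. It solves $\Delta v=g$ with $|v|+|x||Dv|+|x|^2|D^2v|\le C|x|^{1-2\alpha+\varepsilon}$. It reads off $u-b\cdot x=O(|x|^{1-2\alpha+\varepsilon})$ from the expansion of the harmonic function $u-v$. The rescaled estimate of \Cref{le2.2} then improves the bounds to $|Du-b|\le C|x|^{-2\alpha+\varepsilon}$ and $|D^2u|\le C|x|^{-1-2\alpha+\varepsilon}$, and the argument is repeated. After finitely many rounds $v=O(|x|^{-\beta})$. One further round, now with $u-b\cdot x=O(\log|x|)$, gives every $\gamma<1$ together with the derivative bounds. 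Your feedback step has the same slip ($g=O(|x|^{-3})$ rather than $O(|x|^{-4}\log|x|)$), but there it is harmless.

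\textbf{The rate $|Du-b|\le C|x|^{-\alpha_0}$ is not justified by your mechanism.} Scale-uniform Harnack constants for the single bounded function $D_ku$, combined with the maximum principle on annuli, can give existence of $\lim_{x\to\infty}D_ku$. They give no polynomial rate. For example, $w=1/\log|x|$ solves $a^{ij}w_{ij}=0$ in $B_{e^2}^c$ with smooth coefficients $a=\hat x\otimes\hat x+(1+2/\log|x|)(I-\hat x\otimes\hat x)$, $\hat x=x/|x|$. These coefficients have eigenvalues in $[1,2]$, tend to $I$, and involve no drift, yet $w$ converges only logarithmically.

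The rate in the paper is genuinely two-dimensional and uses the structure of $u$ rather than of $D_ku$. For $a^{ij}u_{ij}=0$ in the plane, $u_1-iu_2$ is quasiregular. This is what the gradient estimate \cite[Theorem 2.6]{MR4792754}, used in \Cref{le2.3}, exploits, applied directly to $a^{ij}u_{ij}=0$ without differentiating the equation. You should invoke that estimate instead of the critical-drift Harnack argument.
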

\proof Since $F(p,0)\equiv 0$, $u$ is a solution of the following linear uniformly elliptic equation:
\begin{equation}\label{e3.9}
a^{ij}u_{ij}=0\quad\mbox{in}~~ \bar{B}_{2R}^c,
\end{equation}
where
\begin{equation*}
a^{ij}(x):=\int_{0}^{1}F_{M_{ij}}(Du(x),tD^2u(x)) dt.
\end{equation*}
Then by the same proof as in \Cref{le2.3}, with the aid of \cite[Theorem 2.6]{MR4792754}, there exist $0<\alpha<1$ and $b\in \mathbb{R}^2$ such that
\begin{equation}\label{e6.13}
|Du(x)-b|\leq C|x|^{-\alpha}, \quad
|D^2u(x)|\leq  C|x|^{-1-\alpha},~\forall ~x\in \bar{B}_{2R}^c.
\end{equation}

Now the proof follows the approach of \cite{MR4792754}. The equation \cref{e3.9} can be rewritten as
\begin{equation*}
F_{M_{ij}}(b,0)u_{ij}=\left(F_{M_{ij}}(b,0)-a^{ij}\right)u_{ij} \quad\mbox{in}~~ \bar{B}_{2R}^c.
\end{equation*}
Note that $F_{M_{ij}}(b,0)$ is a positively definite matrix. Up to a change of variables $y=PQx$ for some positively definite diagonal matrix $P$ and some orthogonal matrix $Q$, we assume that $F_{M_{ij}}(b,0)$ is the unit matrix $\delta^{ij}$ and then $u$ is a solution of
\begin{equation}\label{e6.10}
\Delta u=f:=\left(\delta^{ij}-a^{ij}\right)u_{ij}\quad\mbox{in}~~ \bar{B}_{2R}^c.
\end{equation}

By $F\in C^{1,1}$ and \cref{e6.13},
\begin{equation}\label{e6.11}
\begin{aligned}
|a^{ij}(x)-\delta^{ij}|=&\int_{0}^{1}\left(F_{M_{ij}}(Du(x),tD^2u(x)) -F_{M_{ij}}(b,0)\right)dt\\
\leq& C\left(|x|^{-\alpha}+|x|^{-1-\alpha}\right)\leq C|x|^{-\alpha},~\forall ~x\in \bar{B}_{2R}^c.
\end{aligned}
\end{equation}

Then by \cref{e6.13} and \cref{e6.11},
\begin{equation*}
|f(x)|\leq C|x|^{-1-2\alpha},~\forall ~x\in \bar{B}_{2R}^c.
\end{equation*}
For $\varepsilon>0$ small enough, let $v$ be a solution of (see \cite[Lemma 3.4]{MR4792754} and a detailed proof can be found in \cite[p. 46]{MR3299174})
\begin{equation*}
\Delta v=f\quad\mbox{in}~~ \bar{B}_{2R}^c
\end{equation*}
with
\begin{equation*}
\begin{aligned}
&|v(x)|+|x||Dv(x)|+|x|^2|D^2v(x)|\leq C|x|^{1-2\alpha+\varepsilon},~~\forall ~x\in \bar{B}_{2R}^c.
\end{aligned}
\end{equation*}

Hence, $u-v$ is a harmonic function and from \cite[Lemma 3.5]{MR4792754}, there exist constants $c,d$ such that
\begin{equation}\label{e6.12}
u(x)-v(x)= b\cdot x + d \log |x| + c + O(|x|^{-1})\quad\mbox{as}~~|x|\to \infty.
\end{equation}

Let
\begin{equation*}
\bar{u}=u- b\cdot x
\end{equation*}
and then
\begin{equation}\label{e6.2}
\bar{u}(x)=v(x)+d \log |x| + c + O(|x|^{-1})
=O(|x|^{1-2\alpha+\varepsilon}),~\forall ~x\in \bar{B}_{2R}^c.
\end{equation}
In addition, $\bar{u}$ is a solution of
\begin{equation*}\label{e6.1}
F(D\bar{u}+b,D^2\bar{u})=0\quad\mbox{in}~~\bar{B}_{2R}^c.
\end{equation*}

Similar to the proof of \Cref{le2.3}, by \cref{e6.2} and the interior $C^{2,\alpha}$ regularity (see \Cref{le2.2}) for $\bar{u}$, we have
\begin{equation*}
\begin{aligned}
&|D\bar{u}(x)|\leq C|x|^{-2\alpha+\varepsilon}, \quad
&|D^2\bar{u}(x)|\leq C|x|^{-1-2\alpha+\varepsilon},~\forall ~x\in \bar{B}_{2R}^c.
\end{aligned}
\end{equation*}
Therefore,
\begin{equation*}\label{e6.3}
|Du(x)-b|\leq C|x|^{-2\alpha+\varepsilon}, \quad
|D^2u(x)|\leq  C|x|^{-1-2\alpha+\varepsilon},~\forall ~x\in \bar{B}_{2R}^c.
\end{equation*}

Then similar to \cref{e6.11}, we have
\begin{equation*}\label{e6.4}
\begin{aligned}
|a^{ij}(x)-\delta^{ij}|=&\int_{0}^{1}\left(F_{M_{ij}}(Du(x),tD^2u(x)) -F_{M_{ij}}(b,0)\right)dt\\
\leq&\ C\left(|x|^{-2\alpha+\varepsilon}+|x|^{-1-2\alpha+\varepsilon}\right)\leq C|x|^{-2\alpha+\varepsilon},~\forall ~x\in \bar{B}_{2R}^c.
\end{aligned}
\end{equation*}
By repeating the argument from \cref{e6.10} to \cref{e6.12}, we obtain \cref{e6.12} with
\begin{equation*}
v(x)=O(|x|^{1-4\alpha+2\varepsilon}),~\forall ~x\in \bar{B}_{2R}^c.
\end{equation*}
Repeat the above process. After finitely many iterations, we obtain \cref{e6.12} with
\begin{equation*}
v(x)=O(|x|^{-\beta}),~\forall ~x\in \bar{B}_{2R}^c
\end{equation*}
for some $0<\beta<1$. That is, we have the following expansion for $u$ at infinity
\begin{equation*}\label{e6.14}
u(x)= b\cdot x + d \log |x| + c + O(|x|^{-\beta})\quad\mbox{as}~~|x|\to \infty.
\end{equation*}

Finally, let
\begin{equation*}
\bar{u}=u- b\cdot x
\end{equation*}
again and then
\begin{equation*}
\bar{u}(x)=O(\log |x|),~\forall ~x\in \bar{B}_{2R}^c.
\end{equation*}
By repeating the above process once again, we obtain \cref{e6.12} with
\begin{equation}\label{e5.9}
v(x)=O(|x|^{-\gamma}), \quad |Dv(x)|=O(|x|^{-1-\gamma}), \quad
|D^2v(x)|=O(|x|^{-2-\gamma}), \quad~\forall ~x\in \bar{B}_{2R}^c
\end{equation}
for any $0<\gamma<1$. Therefore, we have the following expansion for $u$ at infinity
\begin{equation*}
u(x)= b\cdot x + d \log |x| + c + O(|x|^{-\gamma})\quad\mbox{as}~~|x|\to \infty.
\end{equation*}

Let
\begin{equation*}
w=u-v-b\cdot x - d \log |x| -c,
\end{equation*}
which is harmonic in $\bar B_{2R}^c$. From \cref{e6.12}, $w(x)=O(|x|^{-1})$ and hence
\begin{equation*}
|Dw(x)|=O(|x|^{-2}), \quad |D^2w(x)|=O(|x|^{-3}). 
\end{equation*}
Therefore, by noting \cref{e5.9}, we have
\begin{equation*}
\begin{aligned}
&|Du(x)-b|= O(|x|^{-1}), \quad
|D^2u(x)|= O(|x|^{-2}),~\forall ~x\in \bar{B}_{2R}^c.
\end{aligned}
\end{equation*}
~\qed\bigskip

Now, we prove \Cref{th1.3} for the case $f'(0)=-1$.\\[2mm]
\noindent\textbf{Proof of \Cref{th1.3} (the case $f'(0)=-1$).} By \Cref{le2.6}, $u$ is a solution of a fully nonlinear uniformly elliptic equation in the form \cref{e2.3}. Since our graph is uniformly elliptic of minimal type, we have $F(p,0)\equiv 0$ (see \cref{e4.2}). From \Cref{le2.5}, $F\in C^{1,1}$. Moreover, \cref{e2.25} holds by \Cref{le2.3}.

Therefore, the assumptions of \Cref{th2.1} are all satisfied and we have the expansion \cref{e1.4}. Note that $b=0$ since the limit unit normal of the graph is $N_{\infty}=(0,0,1)$. In addition, $F_{M_{ij}}(0,0)=\delta^{ij}$ (see \cref{e2.12} and note $f'(0)=-1$), so we obtain the expansion \cref{e1.10}.~\qed\bigskip

Next, we prove the strong comparison principle at infinity.~\\
\noindent\textbf{Proof of \Cref{th1.2}.} By \Cref{th1.3}, there exist $c,\tilde{c},d,\tilde{d}\in \mathbb{R}$ such that for any $0<\alpha<1$,
\begin{equation}\label{e2.18-0}
\begin{aligned}
&u(x)=d\log |x|+c+O(|x|^{-\alpha}),~\forall ~x\in \bar{B}_R^c,\\
&\tilde u(x)=\tilde d\log |x|+\tilde c+O(|x|^{-\alpha}),~\forall ~x\in \bar{B}_R^c
\end{aligned}
\end{equation}
and
\begin{equation}\label{e2.18}
\begin{aligned}
&|Du(x)|= O(|x|^{-1}), \quad |D^2u(x)|= O(|x|^{-2}),~\forall ~x\in \bar{B}_R^c,\\
&|D\tilde u(x)|= O(|x|^{-1}), \quad |D^2\tilde u(x)|= O(|x|^{-2}),~\forall ~x\in \bar{B}_R^c.
\end{aligned}
\end{equation}

Let $v=u-\tilde{u}$. Then $v$ is a solution of the following linear uniformly elliptic equation:
\begin{equation}\label{e2.1}
a^{ij}v_{ij}+a^iv_i=0\quad\mbox{in}~~ \bar{B}_R^c,
\end{equation}
where
\begin{equation*}
\begin{aligned}
&a^{ij}(x):=\int_{0}^{1}F_{M_{ij}}(\xi) dt, \quad a^i(x):=\int_{0}^{1}F_{p_{i}}(\xi) dt,\\
&\xi:=(\xi_1,\xi_2):=(D\tilde u(x)+t(Du(x)-D\tilde{u}(x)),D^2\tilde u(x)+t(D^2u(x)-D^2\tilde{u}(x))).
\end{aligned}
\end{equation*}
As the calculation in \cref{e6.11}, by noting \cref{e2.18}, we have
\begin{equation}\label{e2.22}
|a^{ij}(x)-\delta^{ij}|\leq C|x|^{-1},~\forall ~x\in \bar{B}_R^c.
\end{equation}
Similarly, by noting $F\in C^{1,1}$, $F_{p}(\xi_1,0)\equiv 0$ and \cref{e2.18}, we have
\begin{equation*}
|F_{p}(\xi)|=|F_{p}(\xi)-F_p(\xi_1,0)|\leq C|\xi_2|\leq C\left(|D^2u(x)|+|D^2\tilde u(x)|\right)\leq C|x|^{-2}.
\end{equation*}
Thus,
\begin{equation}\label{e2.19}
|a^i|=\left|\int_{0}^{1}F_{p_{i}}(\xi) dt\right|= O(|x|^{-2}),~\forall ~x\in \bar{B}_R^c.
\end{equation}

Once we have the estimate \cref{e2.19} for $a^i$, the limit of $v(x)$ exists as $x\to\infty$ (see \cite[Theorem 3]{MR81416}). A standard modern proof relies on the interior Harnack inequality (see \cite[Corollary 9.25]{MR1814364}); see \cite[Lemma 2.1]{LZ_asym} or \cite[Theorem 2.2]{MR4038557} for this standard proof (concerning equations without lower order terms). Thus,
\begin{equation}\label{e2.20}
\lim_{x\to \infty} v(x)=+\infty\quad\mbox{or}~~\lim_{x\to \infty} v(x)=c_0\geq 0.
\end{equation}
If the first case holds, then $d>\tilde{d}$ in \cref{e2.18-0}. Hence, we have
\cref{e1.9}.

In the following we consider the second case in \cref{e2.20}. Consider the positive function
\begin{equation*}
  w(x)=\log \log |x|, \quad x\in B_{3}^c.
\end{equation*}
The key is to show that $w$ is a supersolution of \cref{e2.1} in $B_{r_0}^c$ for some $r_0$ large enough, which is achieved by some calculations. First, by direct calculation,
\begin{equation*}\label{e2.21}
\begin{aligned}
&|Dw(x)|\leq \left(\log |x|\right)^{-1}|x|^{-1}, \quad |D^2w(x)|\leq C\left(\log |x|\right)^{-1}|x|^{-2}, \\
&\Delta w(x)=-\left(\log |x|\right)^{-2}|x|^{-2},~\forall ~x\in B_{3}^c.
\end{aligned}
\end{equation*}
Then, using \cref{e2.22} and \cref{e2.19}, we have
\begin{equation*}
  \begin{aligned}
a^{ij}w_{ij}+a^iw_i=&\Delta w+(a^{ij}-\delta^{ij})w_{ij}+a^iw_i\\
\leq& -\left(\log |x|\right)^{-2}|x|^{-2}+C\left(\log |x|\right)^{-1}|x|^{-3}.
  \end{aligned}
\end{equation*}
Hence, there exists $r_0$ large enough such that
\begin{equation*}\label{e2.23}
a^{ij}w_{ij}+a^iw_i\leq 0\quad\mbox{in}~~\bar B_{r_0}^c.
\end{equation*}
That is, $w$ is a supersolution.

Since $v$ is bounded and $w(x)\to +\infty$ as $x\to \infty$, then for any $\varepsilon>0$, there exists $r_1>r_0$ such that for any $r\geq r_1$,
\begin{equation*}
v\leq \max_{\partial B_{r_0}}v+\varepsilon w\quad\mbox{on}~~\partial B_{r}.
\end{equation*}
By the comparison principle (note that $w>0$),
\begin{equation*}
v\leq \max_{\partial B_{r_0}} v+\varepsilon w\quad\mbox{in}~~B_{r}\backslash B_{r_0}.
\end{equation*}
Let $r\rightarrow \infty$ first and then $\varepsilon\rightarrow 0$, we have
\begin{equation*}
  v\leq \max_{\partial B_{r_0}}v \quad \mbox{in}~~B_{r_0}^c.
\end{equation*}
The inequality
\begin{equation*}
 v\geq \min_{\partial B_{r_0}}v \quad \mbox{in}~~B_{r_0}^c
\end{equation*}
can be proved in a similar way and therefore
\begin{equation*}
\min_{\partial B_{r_0}}v\leq v(x)\leq \max_{\partial B_{r_0}}v, ~\forall ~x\in B_{r_0}^c.
\end{equation*}
Thus,
\begin{equation*}
\min_{\partial B_{r_0}}v\leq c_0 \leq \max_{\partial B_{r_0}}v.
\end{equation*}
Similarly, this can be proved for any $r>r_0$, that is, we have
\begin{equation*}
\min_{\partial B_{r}}v\leq c_0\leq \max_{\partial B_{r}}v, ~\forall ~r>r_0.
\end{equation*}
Hence, \cref{e1.6-3} holds. ~\qed\bigskip

We are now ready to prove \Cref{co1.1}.~\\
\noindent\textbf{Proof of \Cref{co1.1}.} This is a direct consequence of \Cref{th1.2}. Indeed, by the assumption and \Cref{th1.2},
\begin{equation*}
u(x)-\tilde u(x)\to 0\quad\mbox{as}~~x\to \infty
\end{equation*}
and there exists $r_0>0$ such that
\begin{equation*}
0\leq \min_{\partial B_r} (u-\tilde u)\leq 0\leq \max_{\partial B_r} (u-\tilde u),~\forall ~ r>r_0.
\end{equation*}
Hence, $u(x)\geq \tilde u (x)$ in $B_R^c$ and there exists $x_0\in \bar{B}_R^c$ such that
\begin{equation*}
u(x_0)=\tilde{u}(x_0).
\end{equation*}
Then, by the strong maximum principle, $u\equiv \tilde{u}$.~\qed\bigskip

\section{Proof of main results: the case \texorpdfstring{$f'(0)\neq -1$}{f'(0)≠-1}}\label{S6}
In the case $f'(0)\neq -1$, the fully nonlinear operator $F$ lacks sufficient smoothness to yield a refined asymptotic expansion such as \cref{e1.10}. More precisely, under this assumption, \cref{e2.30} no longer holds. Consequently, the equation cannot be regarded as a perturbation of the Laplace equation at infinity. Instead, we directly compare the solution $u$ with a suitably chosen model solution $v$. Specifically, the model solution $v$ denotes the positive (or negative) radial solution of the same equation. Since $v$ is radial, the PDE reduces to an ODE, which is comparatively easier to analyze.

Local and global properties of radial solutions of \cref{e1.6} have been studied in \cite{MR4557820}. Moreover, explicit formulas for the rotational solutions of \cref{e1.6} in the particular case $f(x)=a x$, with $a\in\mathbb{R}$, were obtained in \cite{MR4089078}.

The following lemma constitutes the main technical tool used in the proof of \Cref{th1.3} for the case $f'(0)\neq -1$.

\begin{lemma}\label{le8.1}
Let $f\in C^{1,1}_{loc}[0,+\infty)$ and satisfy \cref{e1.6-2}--\cref{e1.6-t}. For any $R_0>0$ and $C_0> 0$, there exists a unique radial solution $u$ of
\begin{equation}\label{e8.0}
\kappa_2=f(\kappa_1)\quad\mbox{in}~~B_{R_0}^c
\end{equation}
with the initial conditions $u(R_0)=0$ and $u'(R_0)=C_0$. In addition, $u$ has the following properties:~\\[1mm]
(i) The solution $u$ is nondecreasing, $u'$ is nonincreasing and $u'(r)\to 0$ as $r\to \infty$. Moreover,
\begin{equation*}\label{e8.0-2}
0\leq u(r)\leq \frac{C_0R_0^{\Lambda}}{1-\Lambda}r^{1-\Lambda}, \quad  0<u'(r)\leq C_0, \quad 
-\frac{C_0(1+C_0^2)}{\Lambda R_0}\leq u''(r)<0,
~\forall ~r\in [R_0,+\infty).
\end{equation*}
(ii) If $-1<f'(0)<0$, there exists $K> 0$ such that
\begin{equation}\label{e8.1}
\lim_{r\to \infty} \frac{u(r)}{r^{1+f'(0)}}=K.
\end{equation}
Moreover, $K$ is a continuous function of $R_0,C_0$ and
\begin{equation}\label{e8.2-1}
K\to 0 ~~\mbox{ as }~~R_0\to 0 ~~(\mbox{or }~~C_0\to 0)\quad\mbox{and}\quad K\to \infty ~~\mbox{ as }~~R_0\to \infty.
\end{equation}
(iii) If $f'(0)<-1$, there exists $K> 0$ such that
\begin{equation*}\label{e8.1-2}
\lim_{r\to \infty} u(r)=K.
\end{equation*}
Moreover, $K$ is a continuous function of $R_0,C_0$ and
\begin{equation}\label{e8.2-2}
K\to 0 ~~\mbox{ as }~~R_0\to 0 ~~(\mbox{or }~~C_0\to 0)\quad\mbox{and}\quad K\to \infty ~~\mbox{ as }~~R_0\to \infty.
\end{equation}

\begin{figure}[h]
  \centering
  \includegraphics[width=0.7\textwidth]{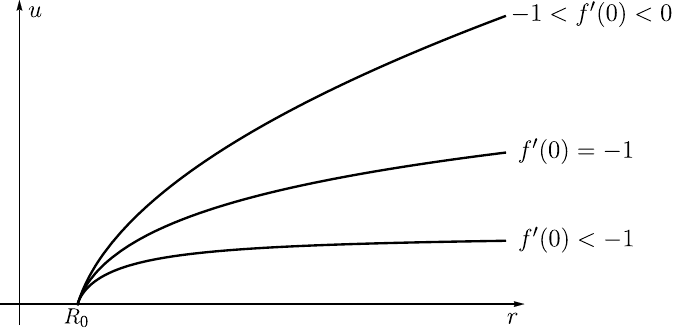}
  \caption{Asymptotic behavior of radial solutions $u(r)$ for different ranges of $f'(0)$.}\label{fig1}
\end{figure}

\end{lemma}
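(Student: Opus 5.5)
The approach is to reduce \cref{e8.0} for a radial graph $u(r)$ to a first-order ODE for the slope. Set $\phi=u'/\sqrt{1+u'^2}=\sin\theta$. The principal curvatures of the rotational graph are then
\begin{equation*}
\kappa_a=\frac{u''}{(1+u'^2)^{3/2}}=\phi'(r), \qquad \kappa_b=\frac{u'}{r\sqrt{1+u'^2}}=\frac{\phi}{r}.
\end{equation*}
Since $\mathcal{K}\le 0$, we expect $u'>0$ and $u''<0$, so $\kappa_1=\phi/r>0$ and $\kappa_2=\phi'$. The equation becomes
\begin{equation*}
\phi'(r)=f\big(\phi(r)/r\big), \qquad \phi(R_0)=\frac{C_0}{\sqrt{1+C_0^2}}\in(0,1).
\end{equation*}
Because $f$ is locally Lipschitz, Picard's theorem gives local existence and uniqueness.

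To get global existence, note that $f(t)<0$ for $t>0$, so $\phi$ is decreasing while $\phi>0$. Moreover $\phi\equiv 0$ is a solution, since $f(0)=0$, so uniqueness prevents $\phi$ from reaching $0$ in finite time. Hence $0<\phi\le\phi(R_0)<1$ on $[R_0,\infty)$, the solution is global, and $u(r)=\int_{R_0}^r \phi/\sqrt{1-\phi^2}$ is well defined. Conversely, any radial solution with these initial data yields such a $\phi$, which gives uniqueness of $u$.

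For (i), $u'>0$ with $u'\le C_0$ follows from $0<\phi\le\phi(R_0)$, and $u''<0$ follows from $\phi'<0$. Using $f(t)\ge -t/\Lambda$ we get $\phi'\ge -\phi/(\Lambda r)$, which gives the stated lower bound on $u''=(1+u'^2)^{3/2}\phi'$ at $r\ge R_0$. Using $f(t)\le -\Lambda t$ we get $\phi'\le -\Lambda\phi/r$, hence $\phi\le \phi(R_0)(R_0/r)^{\Lambda}$. Thus $u'\to0$, and $u'\le C_0(R_0/r)^\Lambda$ (since $u'$ is monotone in $\phi$ with $u'/\phi$ increasing). Integrating gives $u\le \frac{C_0R_0^\Lambda}{1-\Lambda}r^{1-\Lambda}$.

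For (ii) and (iii), write $f(t)=f'(0)t+g(t)$ with $|g(t)|\le Ct^2$ near $0$, using $f\in C^{1,1}$. Set $a=-f'(0)$. Then $(r^{a}\phi)'=r^{a}g(\phi/r)$, and the right-hand side is bounded by $Cr^{a-2}\phi^2\le Cr^{a-2-2\Lambda}$. Since $\phi\le Cr^{-\Lambda}$, the perturbation is integrable after one bootstrap: once $\phi\lesssim r^{-a}$ this bound becomes $r^{-a-2}$. Therefore $r^{a}\phi\to L\in(0,\infty)$. Positivity of $L$ comes from the bound $|\log(r^a\phi)'|\le C\phi/r\le Cr^{-1-\Lambda}$, which is integrable, so $\log(r^a\phi)$ has a finite limit.

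Since $\phi\to0$, we have $u'\sim\phi\sim Lr^{-a}$. If $a<1$, then $u(r)/r^{1-a}\to L/(1-a)=:K$, which is (ii). If $a>1$, then $u'$ is integrable and $u\to K=\int_{R_0}^\infty u'\,dr>0$, which is (iii).

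\emph{Main obstacle.} The hardest part will be the continuity of $K$ and the limits in \cref{e8.2-1}, \cref{e8.2-2}. Continuity follows from continuous dependence of $\phi$ on $(R_0,C_0)$ on compact intervals, together with the tail estimate $|\log(r^a\phi)-\log L|\le C r^{-\Lambda}$, which is locally uniform in the parameters. For the limits, I would use scaling. Put $\phi_{R_0}(r)=\psi(r/R_0)$, where $\psi$ solves $\psi'=R_0 f(\psi/(R_0 s))$ with $\psi(1)=\phi(R_0)$.

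As $R_0\to\infty$, the rescaled equation tends to the linear one, since $R_0f(t/R_0)\to f'(0)t$. Hence $L\approx \phi(R_0)R_0^{a}\to\infty$; similarly, for $a>1$, $K\approx R_0\int_1^\infty \phi(R_0)s^{-a}\,ds\to\infty$.

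As $C_0\to0$, we have $\phi\le\phi(R_0)(R_0/r)^\Lambda$ uniformly, and the same comparison with the linear flow gives $K\to0$.

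As $R_0\to0$, I would use the two-sided comparison $(R_0/r)^{1/\Lambda}\le\phi/\phi(R_0)\le(R_0/r)^{\Lambda}$ on a fixed interval $[R_0,1]$. This shows $\phi(1)\le CR_0^{\Lambda}\to0$. Restarting at $r=1$ with a small slope then reduces to the $C_0\to0$ case.
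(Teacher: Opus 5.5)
Your argument is correct and is essentially the paper's. The paper also reduces to the radial ODE, writes the slope as $C_0\exp\bigl(\int_{R_0}^r g(t)\,dt/t\bigr)$ and compares $g$ with $f'(0)$ to get integrable error terms, uses continuous dependence for the continuity of $K$, and splits the range at a fixed radius (e.g.\ $r=1$) to treat $R_0\to0$.

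Your variable $\phi=\sin\theta$ is a mild streamlining. It turns the equation into $\phi'=f(\phi/r)$, removes the factor $1+u'^2$ from the paper's $g$, and gives the decay $\phi\le\phi(R_0)(R_0/r)^{\Lambda}$ directly, so the paper's appeal to Lemma~\ref{le2.3} is not needed. Also, the bound $|(\log(r^a\phi))'|\le C\phi/r^2$ already gives both existence and positivity of $L$, so your ``bootstrap'' remark, which as written assumes $\phi\lesssim r^{-a}$ before proving it, can simply be dropped.
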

\proof Since we seek a radial solution, \cref{e8.0} reduces to
\begin{equation}\label{e8.3}
\frac{u''(r)}{\left(1+u'^2(r)\right)^{3/2}}=f\left(\frac{u'(r)}{r\left(1+u'^2(r)\right)^{1/2}}\right)\quad\mbox{for}~~ r>R_0.
\end{equation}
From the standard ODE theory (e.g., \cite[Theorem 1.1, Chapter 2]{MR1929104}), there exists a unique solution $u\in C^{3,1}$ to \cref{e8.3} in some interval $[R_0, R]$.

Now, we show that the interval of existence can be extended to $[R_0,+\infty)$. Let
\begin{equation*}
A=\left\{r\in (R_0,+\infty): u ~~\mbox{is a solution of \cref{e8.3} in}~~[R_0,r]~~
\mbox{and}~~u'>0~~\mbox{in}~~[R_0,r]\right\}.
\end{equation*}
Since $f$ is only defined in $[0,+\infty)$, we require $u'>0$ in the above definition.

Clearly, $A$ is a nonempty open subset of $[R_0,+\infty)$. We only need to show that $A$ is closed as well. Let $\left\{r_n\right\}\subset A$ with $r_n\to r_0$, and $r_n<r_0$ for any $n$. Then $u$ is a solution of \cref{e8.3} in $[R_0,r_0)$ and $u'> 0$. Using that $f(0)=0$, we can rewrite \cref{e8.3} as
\begin{equation}\label{e8.4}
u''=\frac{u'g}{r}\quad\mbox{in}~~[R_0,r_0),
\end{equation}
where
\begin{equation}\label{e8.4-2}
g(r)=\left(1+u'^2(r)\right)\int_{0}^{1} f'\left(\rho \frac{u'(r)}{r\sqrt{1+u'^2(r)}}\right)d\rho<0.
\end{equation}

Since $u(R_0)=0$ and $u'(R_0)=C_0$, we have
\begin{equation}\label{e8.5}
\begin{aligned}
u'(r)=C_0\exp\left(\int_{R_0}^{r}\frac{g(t)}{t}dt\right), \quad
u(r)=C_0\int_{R_0}^{r}\exp\left(\int_{R_0}^{s}\frac{g(t)}{t}dt\right)ds,\quad r\in [R_0,r_0).
\end{aligned}
\end{equation}

As $g<0$, we have $u''<0$. That is, $u'$ is decreasing and hence
\begin{equation}\label{e6.5}
0<u'\leq C_0\quad\mbox{in}~~[R_0,r_0).
\end{equation}
By combining with the uniformly elliptic condition \cref{e1.6-2} for $f$, we have
\begin{equation}\label{e8.6}
 -\Lambda^{-1}\left(1+C_0^2\right)\leq g\leq -\Lambda\quad\mbox{in}~~[R_0,r_0).
\end{equation}
Then
\begin{equation}\label{e6.8}
\int_{R_0}^{s}\frac{g(t)}{t}dt\leq -\Lambda \log \frac{s}{R_0},~\forall ~s\in [R_0,r_0)
\end{equation}
and hence
\begin{equation}\label{e6.9}
u(r)\leq \frac{C_0R_0^{\Lambda}}{1-\Lambda} r^{1-\Lambda},~\forall ~r\in [R_0,r_0).
\end{equation}
Moreover, by \cref{e8.4}, \cref{e6.5} and \cref{e8.6},
\begin{equation}\label{e8.13}
u''(r)=\frac{u'(r)g(r)}{r}\geq -\frac{C_0(1+C_0^2)}{\Lambda R_0},~\forall ~r\in [R_0,r_0).
\end{equation}

With the aid of \cref{e8.6}, we have
\begin{equation*}
C_1:=\lim_{r\to r_0}u'(r)=C_0\exp\left(\int_{R_0}^{r_0}\frac{g(t)}{t}dt\right)\geq C_0\left(\frac{r_0}{R_0}\right)^{-\Lambda^{-1}(1+C_0^2)}>0.
\end{equation*}
Similarly, the limit $\lim_{r\to r_0} u(r)$ exists as well. Thus, we can obtain a unique solution $v$ of \cref{e8.3} in a neighborhood $(r_0-\delta,r_0+\delta)$ ($\delta>0)$ subjected to the following initial conditions
\begin{equation*}
v(r_0)=\lim_{r\to r_0} u(r), \quad v'(r_0)=C_1.
\end{equation*}
By the uniqueness,
\begin{equation*}
v(r)=C_0\int_{R_0}^{r}\exp\left(\int_{R_0}^{s}\frac{g(t)}{t}dt\right)ds,\quad r\in (r_0-\delta,r_0).
\end{equation*}
That is, $u\equiv v$ in $(r_0-\delta,r_0)$. Hence, $u$ can be extended to $[R_0, r_0+\delta)$. Thus, $r_0\in A$ and $A$ is closed. This shows that $A=[R_0,+\infty)$.

In conclusion, $u$ can be extended to the entire interval $[R_0, +\infty)$ and satisfies all the equations from \cref{e8.4} to \cref{e8.13}, which proves (i).

Next, we consider the case $-1<f'(0)<0$. We need to prove that there exists a positive constant $K$ such that \cref{e8.1} holds. By the derivatives estimates (see \Cref{le2.3}), there exists $\alpha>0$ such that
\begin{equation}\label{e8.7}
0<u'(r)\leq Cr^{-\alpha}, \quad -Cr^{-1-\alpha}\leq u''(r)<0,~\forall ~r>R_0.
\end{equation}
To prove \cref{e8.1}, the key is the following observation (with the aid of \cref{e8.7}):
\begin{equation}\label{e8.8}
\begin{aligned}
|g(r)-f'(0)|\leq& u'^2(r)\int_{0}^{1} |f'|d\rho
+\int_{0}^{1}\left|f'\left(\rho \frac{u'(r)}{r\sqrt{1+u'^2(r)}}\right)-f'(0)\right|d\rho\\
\leq & u'^2(r)\Lambda^{-1}+\max_{s\in [0,1/R_0]}|f''(s)|\cdot \frac{u'(r)}{r\sqrt{1+u'^2(r)}}\\
\leq &Cr^{-2\alpha}+Cr^{-1-\alpha},~\forall ~r>R_0.
\end{aligned}
\end{equation}
Now, we can prove \cref{e8.1}. By the L'H\^{o}pital's rule,
\begin{equation}\label{e8.9}
  \begin{aligned}
K:=\lim_{r\to \infty} \frac{u(r)}{r^{1+f'(0)}}=&\lim_{r\to \infty} \frac{u'(r)}{(1+f'(0))r^{f'(0)}}\\
=&\lim_{r\to \infty} \frac{C_0}{1+f'(0)}\exp\left(-f'(0)\log r+\int_{R_0}^{r}\frac{g(t)}{t}dt\right)\\
=&\lim_{r\to \infty} \frac{C_0}{1+f'(0)}\exp\left(\int_{R_0}^{r}\frac{g(t)-f'(0)}{t}dt-f'(0)\log R_0\right)\\
=&\lim_{r\to \infty} \frac{C_0R_0^{-f'(0)}}{1+f'(0)}\exp\left(\int_{R_0}^{r}\frac{g(t)-f'(0)}{t}dt\right)\\
=&\frac{C_0R_0^{-f'(0)}}{1+f'(0)}\exp\left(\int_{R_0}^{\infty}\frac{g(t)-f'(0)}{t}dt\right),
  \end{aligned}
\end{equation}
where we have used \cref{e8.8} in the last step.

From the ODE theory (see \cite[Corollary 3.3, Chapter 5]{MR1929104}), the solution $u$ and its derivative $u'$ depend on the initial conditions $R_0,C_0$ in a continuous way. By combining with the expression of $u'$ (see \cref{e8.5}), we conclude that $\int_{R_0}^{r}g(t)/t\,dt$ depends on $R_0,C_0$ continuously. In addition, by noting the estimate \cref{e8.8}, $\int_{R_0}^{\infty}(g(t)-f'(0))/t\,dt$ depends on $R_0,C_0$ continuously. That is, $K$ depends on $R_0,C_0$ continuously.

Note that $\int_{R_0}^{\infty}(g(t)-f'(0))/t\,dt \to 0$ as $R_0\to \infty$. Hence, by the last line in \cref{e8.9} and $f'(0)<0$, we conclude that
\begin{equation*}
K\to \infty \quad\mbox{as }~~R_0\to \infty.
\end{equation*}
Next, we consider the situation $R_0\to 0$. By a direct calculation (cf. \cref{e8.8}),
\begin{equation*}
g(r)-f'(0)\leq \max_{s\in [0,1]}|f''(s)| r^{-1-\alpha},~\forall ~r\geq 1.
\end{equation*}
In addition, by combining with $g\leq -\Lambda$ (see \cref{e8.6}), we have
\begin{equation*}
\begin{aligned}
\int_{R_0}^{\infty}\frac{g(t)-f'(0)}{t}dt=\int_{R_0}^{1}\frac{g(t)-f'(0)}{t}dt+\int_{1}^{\infty}\frac{g(t)-f'(0)}{t}dt
\leq  (\Lambda+f'(0))\log R_0+\max_{s\in [0,1]}|f''(s)|.
\end{aligned}
\end{equation*}
Thus,
\begin{equation*}
K\leq \frac{C_0R_0^{\Lambda}}{1+f'(0)}\exp(\max_{s\in [0,1]}|f''(s)|)\to 0 \quad\mbox{as }~~R_0\to 0.
\end{equation*}
Similarly, we also have
\begin{equation*}
K\to 0\quad\mbox{as }~~C_0\to 0.
\end{equation*}
In summary, we have proved (ii). Note that we cannot obtain $K\to \infty$ as $C_0\to \infty$ since $g$ depends on $C_0$ in an implicit way.

Finally, we consider the case $f'(0)<-1$. From the expression of $g$ (see \cref{e8.4-2}) and the continuity of $f'$ at $0$, there exist $\varepsilon>0$ and $R_1>\max(R_0,1)$ (depending only on $f'(0)$ and the continuity of $f'$ at $0$) such that
\begin{equation}\label{e6.7}
g(r)<-1-\varepsilon,~\forall ~r\geq R_1.
\end{equation}
Hence,
\begin{equation}\label{e8.10}
\exp\left(\int_{R_0}^{s}\frac{g(t)}{t}dt\right)\leq Cs^{-1-\varepsilon},~\forall ~s>R_1,
\end{equation}
where $C$ depends on $R_0$ and is independent of $C_0$. Thus,
\begin{equation}\label{e8.11}
K:=\lim_{r\to \infty} u(r)=C_0\int_{R_0}^{\infty}\exp\left(\int_{R_0}^{s}\frac{g(t)}{t}dt\right)ds<+\infty.
\end{equation}

Clearly, $K\to 0$ as $C_0\to 0$. In addition, with the aid of \cref{e8.6},
\begin{equation*}
\begin{aligned}
K=C_0\int_{R_0}^{\infty}\exp\left(\int_{R_0}^{s}\frac{g(t)}{t}dt\right)ds
\geq C_0\int_{R_0}^{\infty}\left(s^{-\Lambda^{-1}(1+C_0^2)}R_0^{\Lambda^{-1}(1+C_0^2)}\right)ds
=\frac{C_0R_0}{\Lambda^{-1}(1+C_0^2)-1}.\\
\end{aligned}
\end{equation*}
Thus, $K\to \infty$ as $R_0\to \infty$. Finally, we consider $R_0\to 0$. Note that $g\leq -\Lambda$ (see \cref{e8.6}). Hence, when $s<R_1$,
\begin{equation}\label{e8.12}
\exp\left(\int_{R_0}^{s}\frac{g(t)}{t}dt\right)\leq  R_0^{\Lambda}s^{-\Lambda}.
\end{equation}
By combining \cref{e6.7} with \cref{e8.12}, we have
\begin{equation*}
  \begin{aligned}
K=&C_0\int_{R_0}^{R_1}\exp\left(\int_{R_0}^{s}\frac{g(t)}{t}dt\right)ds
+C_0\int_{R_1}^{\infty}\exp\left(\int_{R_0}^{s}\frac{g(t)}{t}dt\right)ds\\
\leq & C_0 R_0^{\Lambda}\int_{R_0}^{R_1}s^{-\Lambda}ds
+C_0\int_{R_1}^{\infty}\exp\left(\int_{R_0}^{R_1}\frac{g(t)}{t}dt
+\int_{R_1}^{s}\frac{g(t)}{t}dt\right)ds\\
\leq & CR_0^{\Lambda}+C\int_{R_1}^{\infty}R_0^{\Lambda}s^{-1-\varepsilon}ds \leq  CR_0^{\Lambda}.
  \end{aligned}
\end{equation*}
Therefore, $K\to 0$ as $R_0\to 0$. In conclusion, we have proved (iii). ~\qed\bigskip

Now, we can prove the main theorem. We divide it into two cases: $-1<f'(0)<0$ and $f'(0)<-1$. We first consider the former.
\begin{lemma}\label{le3.2}
Let $u$ define a positive solution to \cref{e1.6}--\cref{e1.6-2}--\cref{e1.6-t} in $B_R^c$ with  limit unit normal $N_{\infty}=(0,0,1)$ and $-1<f'(0)<0$. Then
\begin{equation*}
c:=\liminf_{x\rightarrow \infty} \frac{u(x)}{|x|^{1+f'(0)}}<+\infty.
\end{equation*}
Moreover,~\\
(i) If $c=0$, there exists a constant $K$ such that
\begin{equation}\label{e3.14}
\lim_{x\rightarrow \infty} u(x)=K.
\end{equation}
(ii) If $c>0$,
\begin{equation*}\label{e3.13}
\lim_{x\rightarrow \infty}\frac{u(x)}{|x|^{1+f'(0)}}=c.
\end{equation*}
\end{lemma}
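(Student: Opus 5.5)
Throughout, set $a:=1+f'(0)\in(0,1)$. The plan is to compare $u$ with the radial solutions of \Cref{le8.1}(ii), used as barriers. By \cref{e8.1}, these behave like $K r^{a}$ at infinity. By \cref{e8.2-1} and the continuity of $K$ in $(R_0,C_0)$, the constant $K$ can be made to take any prescribed positive value by adjusting $R_0$ with $C_0$ fixed. Comparisons will always be made on annuli $B_\rho\setminus \bar B_{r_1}$ via the maximum principle for the equation $F(Du,D^2u)=0$. This equation is uniformly elliptic there because $|Du|,|D^2u|$ are bounded by \Cref{le2.0} and \Cref{le2.3}, and the same holds for the radial solutions by \Cref{le8.1}(i). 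We then let $\rho\to\infty$. Two further facts will be used repeatedly. First, $u$ is a positive solution of the linear equation \cref{e2.9}, so the Harnack inequality gives $\max_{\partial B_r}u\le C\min_{\partial B_r}u$ with $C$ independent of $r$. Second, $u\le C|x|^{1-\alpha}$ by \cref{e2.15}.

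\emph{Step 1: the case $0<c<\infty$.} Fix $\varepsilon>0$. For the lower bound, choose a radial $v$ (with $v(R_0)=0$) whose constant $K$ equals $c-\varepsilon$. Then $v-\max_{\partial B_{r_1}}v\le u$ on $\partial B_{r_1}$. I would like $v\le u$ near infinity, but only $\liminf u/|x|^a=c$ is known, so I compare on annuli whose outer radii $\rho_k$ are chosen along circles where $\min_{\partial B_{\rho_k}}u/\rho_k^{a}$ is close to $c$. This is where I would use the Harnack control together with a sliding argument in $R_0$.

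For the upper bound, take $v$ with $K=c+\varepsilon$ and compare $u$ with $v+\max_{\partial B_{r_1}}u$. Suppose $u$ exceeded this along some sequence. Then, by Harnack and the uniform $C^{2,\alpha}$ bounds of \Cref{le2.3}, the rescalings $u(ry)/r^a$ converge to a positive solution of the homogeneous limiting operator $F_\infty(M)=\lim_{t\to 0}t^{-1}F(0,tM)$ on $\mathbb{R}^2\setminus\{0\}$. For this operator $|y|^{a}$ is the radial solution. A Bôcher/Liouville-type classification of such solutions (they are $\beta|y|^{a}+\gamma$, which can be proved by comparison with $\beta|y|^{a}$ and the strong maximum principle) then forces $\lim u/|x|^a$ to exist and equal $c$.

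\emph{Step 2: $c<\infty$, and the case $c=0$.} The same blow-down argument proves finiteness. If $c=\infty$, normalize by $m(r)=\min_{\partial B_r}u$; the limit is then a positive solution with $m\equiv 1$ on $\partial B_1$ yet growing faster than $|y|^a$ at infinity relative to the scaling, contradicting the classification above. If $c=0$, the blow-down limit must have $\beta=0$, i.e.\ it is constant. Then $u$ grows slower than any multiple of $|x|^a$. Using $v+\max_{\partial B_{r_1}}u$ with arbitrarily small $K$ as an upper barrier (via \cref{e8.2-1}, $C_0\to0$) shows that $u$ is bounded. Since $u_\infty$ exists by \Cref{th1.1}, this gives \cref{e3.14}.

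\emph{Main obstacle.} The main obstacle is the blow-down step: justifying that the rescaled operators converge to a homogeneous operator whose only positive solutions in the punctured plane are $\beta|y|^a+\gamma$. If that fails, I would instead try a purely barrier-based argument exploiting the continuous dependence of $K$ on $R_0$ and the Harnack inequality on circles.
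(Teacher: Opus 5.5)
There is a genuine gap in the proof that $c<\infty$. A blow-down does not detect slowly varying factors. If $u$ grew like $|x|^{a}\log|x|$, then $c=+\infty$, yet $u(ry)/m(r)=|y|^a\bigl(1+\log|y|/\log r\bigr)\to|y|^a$. That limit lies in the very family you classify, so no contradiction arises: the ``faster than $|y|^a$'' growth does not survive the limit. For the same reason, ``$c=0$ forces $\beta=0$'' is unjustified, since $|x|^a/\log|x|$ also blows down to $|y|^a$.

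Excluding $c=\infty$ requires a comparison on the whole exterior domain. The paper rescales, $u_\delta(x)=\delta u(x/\delta)$, and compares with the radial solution $v_\delta$ of the rescaled equation normalized by $v_\delta(R)=0$, $v_\delta'(R)=1$. Its bounds in \Cref{le8.1}(i) are independent of $\delta$. If $c=\infty$, then $v_\delta<u_\delta$ near infinity, hence on all of $B_R^c$ by the comparison principle. But $u_\delta\le C\delta^{\alpha}|x|^{1-\alpha}\to0$ by \cref{e2.15}, so a $C^1_{loc}$ limit $v$ of the $v_\delta$ satisfies $v\le 0$ together with $v(R)=0$, $v'(R)=1$, which is absurd. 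In unscaled terms, radial solutions starting at $R_0=R/\delta\to\infty$ with slope $1$ would lie below $u$ and force $u\gtrsim|x|$.

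Separately, the classification you invoke is false as stated. The function $|y|^{1+1/f'(0)}$ is a positive solution of the limiting equation $\lambda_2=f'(0)\lambda_1$ in $\mathbb{R}^2\setminus\{0\}$, and it is not of the form $\beta|y|^a+\gamma$. The remaining steps do not need this classification.

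For $c=0$, your closing barrier argument is exactly the paper's. The Harnack inequality for $u$ on circles through points where $u=o(|x|^a)$ gives $u\le v$ on whole circles; then apply comparison and let $C_0\to0$.

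For $0<c<\infty$, the lower bound $u\ge(c-\varepsilon)|x|^a$ near infinity is just the definition of $\liminf$; no annuli or sliding in $R_0$ are needed. For the upper bound, blow down along radii $r_k$ with $\min_{\partial B_{r_k}}u/r_k^a\to c$. Do not blow down where $u$ exceeds the barrier, since there $u/r^a$ need not stay bounded. The limit satisfies $w\ge c|y|^a$ with equality somewhere on $\partial B_1$, so the strong maximum principle for the difference gives $w\equiv c|y|^a$. Hence $\max_{\partial B_{r_k}}u\le(c+\varepsilon)r_k^a$ for large $k$. Comparison on $B_{r_k}\setminus B_R$ with a radial solution of asymptotic constant $c+2\varepsilon$, shifted up by $\max_{\partial B_R}u$, then finishes.

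This blow-down route is a valid but heavier substitute for the paper's step. The paper instead applies the Harnack inequality directly to $u-v^{c-\varepsilon}\ge0$, which solves a linear equation with $|a^i|\le C|x|^{-1}$ and so admits a scale-invariant Harnack inequality on circles.
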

\proof Suppose that $c=+\infty$, i.e.,
\begin{equation*}
\lim_{x\to \infty} \frac{|x|^{1+f'(0)}}{u(x)}=0.
\end{equation*}
Consider the function $u_{\delta}$ which defines the new graph obtained by scaling with factor $0<\delta<1$:
\begin{equation*}
u_{\delta}(x):=\delta\, u\left(\frac{x}{\delta}\right),~\forall ~x\in B_R^c.
\end{equation*}
Then $u_{\delta}$ is a solution of
\begin{equation}\label{e3.15}
\kappa_2=f_{\delta}(\kappa_1)\quad\mbox{in}~~B_R^c,
\end{equation}
where
\begin{equation*}
f_{\delta}(t):=\delta f(\delta^{-1}t),~\forall ~t\geq 0.
\end{equation*}

With respect to this $f_{\delta}$, by \Cref{le8.1}, there exists a radial solution $v_{\delta}$ of \cref{e3.15} with $v_{\delta}(R)=0$ and $v'_{\delta}(R)=1$. Moreover (note that $f'_{\delta}(0)=f'(0)$),
\begin{equation*}
\lim_{x\to \infty} \frac{v_{\delta}(x)}{u_{\delta}(x)}=\lim_{x\to \infty} \frac{|x|^{1+f'(0)}}{u_{\delta}(x)}=0.
\end{equation*}
Hence, there exists $R_{0}>R$ such that
\begin{equation*}
v_{\delta}<u_{\delta}\quad\mbox{in}~~B_{R_0}^c.
\end{equation*}
By the comparison principle, we also have
\begin{equation*}
v_{\delta}<u_{\delta}\quad\mbox{in}~~B_{R_0}\backslash B_R.
\end{equation*}
That is,
\begin{equation}\label{e3.16}
v_{\delta}<u_{\delta}\quad\mbox{in}~~ B_R^c.
\end{equation}
By \Cref{le2.3},
\begin{equation*}
u_{\delta}(x)=\delta u(\delta^{-1}x)\leq C\delta^{\alpha}|x|^{1-\alpha},~\forall ~x\in B_R^c.
\end{equation*}
In addition, by the uniform estimates for $v_{\delta}$ (see (i) in \Cref{le8.1}), up to a subsequence, $v_{\delta}\to v$ in $C^{1}_{loc}(B_R^c)$ for some $v$. Let $\delta \to 0$ in \cref{e3.16}, we have
\begin{equation}\label{e6.6}
v\leq 0\quad\mbox{in}~~ B_R^c,
\end{equation}
which is a contradiction since $v'(R)=1>0$.

Next, we consider the case $c=0$. By \Cref{le8.1}, for any $\varepsilon>0$, there exists a positive radial solution $v^{\varepsilon}$ of \cref{e1.6}--\cref{e1.6-2}--\cref{e1.6-t} in $B_R^c$ with $v^{\varepsilon}(R)=0$, $(v^{\varepsilon})'(R)=C_{\varepsilon}$ for some $C_{\varepsilon}>0$ and
\begin{equation*}
\lim_{x\to \infty} \frac{v^{\varepsilon}(x)}{|x|^{1+f'(0)}}=\varepsilon.
\end{equation*}
Hence, by noting $c=0$,
\begin{equation*}
\liminf_{x\to \infty} \frac{u(x)}{v^{\varepsilon}(x)}=0.
\end{equation*}
Then there exist sequences of $R_i\rightarrow \infty$ increasingly and $x_i\in \partial B_{R_i}$ such that
\begin{equation*}
u(x_i)\leq \varepsilon v^{\varepsilon}(x_i), \quad ~\forall ~i\geq 1.
\end{equation*}
By applying the Harnack inequality to $u$ and choosing $\varepsilon$ sufficiently small, we obtain
\begin{equation*}
u(x)\leq Cu(x_i)\leq C\varepsilon v^{\varepsilon}(x_i)
\leq v^{\varepsilon}(x_i)=v^{\varepsilon}(x),~\forall ~x\in \partial B_{R_i},~\forall ~i\geq 2.
\end{equation*}
Thus, from the comparison principle,
\begin{equation*}
u\leq v^{\varepsilon}+\max_{\partial B_R} u~~\mbox{ in}~~B_R^c.
\end{equation*}
By letting $\varepsilon\to 0$ (equivalently $C_{\varepsilon} \to 0$), we infer that $v^{\varepsilon} \to 0$ and
\begin{equation*}
u\leq \max_{\partial B_R} u~~\mbox{ in}~~B_R^c.
\end{equation*}
That is, $u$ is bounded. Hence, from \Cref{th1.1}, there exists a constant $K$ such that \cref{e3.14} holds.

Finally, we consider the case $0<c<+\infty$.  By the assumption, we have for the radial solution $v^{\varepsilon}$, with $\varepsilon=c$, that
\begin{equation*}
\liminf_{x\to \infty} \frac{u(x)}{v^{c}(x)}=1.
\end{equation*}
Then, for any $0<\varepsilon< c$, there exist sequences of $R_i\rightarrow \infty$ increasingly and $x_i\in \partial B_{R_i}$ such that
\begin{equation*}
v^{c-\varepsilon}\leq u ~~\mbox{ in } B^c_{R_1}, \quad
u(x_i)\leq v^{c+\varepsilon}(x_i), \quad ~\forall ~i\geq 1.
\end{equation*}

Let $w=u-v^{c-\varepsilon}$ and then as before (see \cref{e2.1}), $w$ is a solution of the following linear uniformly elliptic equation:
\begin{equation*}\label{e5.10}
a^{ij}w_{ij}+a^iw_i=0\quad\mbox{in}~~ \bar{B}_R^c,
\end{equation*}
with
\begin{equation*}
|a^i(x)|\leq C|x|^{-1}.
\end{equation*}
Then by applying the Harnack inequality to $w$ on $\partial B_{R_i}$ ($i\geq 2$ large enough), we have
\begin{equation*}
\begin{aligned}
u(x)-v^{c-\varepsilon}(x)=&\ w(x)\leq Cw(x_i)
= C\left(u(x_i)-v^{c-\varepsilon}(x_i) \right)\\
\leq&\  C\left(v^{c+\varepsilon}(x_i)-v^{c-\varepsilon}(x_i)\right)
\leq v^{C\varepsilon}(x_i)\\
=&\ v^{C\varepsilon}(x),\quad\forall ~x\in \partial B_{R_i}.
\end{aligned}
\end{equation*}
That is,
\begin{equation*}
 v^{c-\varepsilon}\leq u\leq v^{c+C\varepsilon}~~\mbox{ on}~~\partial B_{R_i},~\forall ~i\geq 2 ~~\mbox{ large enough}.
\end{equation*}
From the comparison principle,
\begin{equation*}
v^{c-\varepsilon} \leq u\leq v^{c+C\varepsilon}+\max_{\partial B_{R}} u~~\mbox{ in}~~B_{R}^c.
\end{equation*}
By the definitions of $v^{c-\varepsilon}$ and $v^{c+C\varepsilon}$, there exists $R_0$ (depending on $\varepsilon$) such that
\begin{equation*}
  \left|\frac{v^{c-\varepsilon}(x)}{|x|^{1+f'(0)}}-c\right|\leq 2\varepsilon, \quad
\left|\frac{v^{c+C\varepsilon}(x)}{|x|^{1+f'(0)}}-c\right|\leq 2C\varepsilon, ~\forall ~x\in B_{R_0}^c.
\end{equation*}
Hence,
\begin{equation*}
\left|\frac{u(x)}{|x|^{1+f'(0)}}-c\right|\leq C\varepsilon, ~\forall ~x\in B_{R_0}^c.
\end{equation*}
That is,
\begin{equation*}
\lim_{x\to \infty} \frac{u(x)}{|x|^{1+f'(0)}}=c,
\end{equation*}
as we wanted to show. ~\qed\bigskip

Next, we consider the case $f'(0)<-1$. We have
\begin{lemma}\label{le3.3}
Let $u$ define a positive solution to \cref{e1.6}--\cref{e1.6-2}--\cref{e1.6-t} in $B_R^c$ with $f'(0)<-1$. Then there exists a constant $K$ such that
\begin{equation}\label{e3.17}
\lim_{x\rightarrow \infty} u(x)=K.
\end{equation}
\end{lemma}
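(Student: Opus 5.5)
By \Cref{th1.1} the limit $u_\infty\in\mathbb{R}\cup\{+\infty\}$ exists, since $u>0$. It therefore suffices to show that $u$ is bounded above. The plan is to dominate $u$ by a bounded radial barrier, as in case (i) of \Cref{le3.2}. There the barriers $v^{\varepsilon}$ are built from \Cref{le8.1}(iii), whose radial solutions now converge to finite limits $K(R_0,C_0)$.

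\textbf{Step 1: a rough growth bound.} By \Cref{le2.3}, $u(x)\le C|x|^{1-\alpha}$. What we actually need is some sequence of circles $\partial B_{R_i}$, $R_i\to\infty$, on which $u$ is controlled by a radial solution that exceeds $u$ there. A bounded radial solution cannot dominate a function that grows like $|x|^{1-\alpha}$, so I would argue by contradiction.

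\textbf{Step 2: rule out $u_\infty=+\infty$ by scaling.} Suppose $u\to+\infty$. Consider the rescalings $u_\delta(x)=\delta u(x/\delta)$, which solve $\kappa_2=f_\delta(\kappa_1)$ with $f_\delta'(0)=f'(0)<-1$. For this $f_\delta$, take the radial solution $v_\delta$ of \Cref{le8.1} with $v_\delta(R)=0$ and $v_\delta'(R)=1$. By part (iii) it is bounded, with limit $K_\delta<\infty$. Since $u_\delta\to+\infty$ at infinity, we get $v_\delta<u_\delta$ outside a large ball. On $\partial B_R$ we have $v_\delta=0<u_\delta$ because $u>0$. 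The comparison principle on the annulus then gives $v_\delta<u_\delta$ in $B_R^c$.

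Next let $\delta\to0$. \Cref{le2.3} gives $u_\delta\le C\delta^{\alpha}|x|^{1-\alpha}\to0$ locally uniformly. Part (i) of \Cref{le8.1} gives uniform $C^{1}$ bounds on $v_\delta$ (the bounds there depend only on $\Lambda$, $R_0$ and $C_0$, which are fixed). Hence $v_\delta\to v$ in $C^1_{loc}$, with $v'(R)=1$ and $v\le0$. This is a contradiction, so $u_\infty<\infty$, and \cref{e3.17} holds with $K=u_\infty$.

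\textbf{Main obstacle.} The delicate point is the comparison principle in Step 2. The region is unbounded, and $u_\delta$, $v_\delta$ solve the same equation with $f_\delta$ only $C^{1,1}_{loc}$. I would handle this exactly as in \Cref{le3.2}: first obtain strict inequality outside a large ball $B_{R_0}$ from the growth hypothesis, then apply the maximum principle for the linearized equation $a^{ij}w_{ij}+a^iw_i=0$ satisfied by $w=u_\delta-v_\delta$ on the bounded annulus $B_{R_0}\setminus B_R$.

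If instead one wants to avoid the contradiction argument, there is an alternative. Fix a radial solution $v$ with $v(R)=0$, chosen via \eqref{e8.2-2} so that its limit is large. Using the Harnack inequality along circles, as in \Cref{le3.2}(i), one would show $u\le v+\max_{\partial B_R}u$. This requires $u$ to be controlled on some sequence of large circles, which again reduces to the scaling argument above.
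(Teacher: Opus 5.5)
Your Step 2 is the paper's proof: assuming $u\to+\infty$, the paper reruns the rescaling argument of \Cref{le3.2}, using the bounded radial solution $v_\delta$ from \Cref{le8.1}(iii). Letting $\delta\to0$ yields a radial limit $v\le 0$ with $v'(R)=1$, a contradiction, and the paper then concludes via \Cref{th1.1}; your Step 1 and the closing alternative are not needed.
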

\proof
The proof follows the same ideas as in \Cref{le3.2}. Thus, if $\lim_{x \to \infty} u(x) = +\infty$, it can be shown in an analogous way that \cref{e6.6} holds; that is,
\begin{equation*}
v \leq 0 \quad \text{in}~ B_R^c,
\end{equation*}
for some radial function $v$ with $v'(R) = 1$, which again leads to a contradiction. Therefore, by \Cref{th1.1}, we obtain \cref{e3.17}.~\qed\bigskip

\section{Appendix}\label{S7}
In \cite{MR4684292}, a series of uniqueness results were established for elliptic Weingarten surfaces in the symmetric case, that is, when $\kappa_2=f(\kappa_1)$ where $f\circ f$ is the identity. In this section, we show that the techniques developed in this article allow these results to be extended to the general case $\kappa_2=f(\kappa_1)$, with $f(c)=c\in\mathbb{R}$, where no symmetry assumption on $f$ is imposed, and even its regularity can be weakened.

We begin by extending Lemma 2.10 in \cite{MR4684292}. More precisely, we obtain the following result.	
\begin{lemma}\label{le9.1}
Let $\Sigma$ be an elliptic Weingarten surface satisfying \cref{e1.6} where $f\in C^{0,1}_{loc}[c,a)$ for some $a>c\geq 0$ and
\begin{equation*}
-\infty<\inf_{t\in [c,a']}f'(t)\leq \sup_{t\in [c,a']}f'(t)<0,~\forall ~a'<a.
\end{equation*}
Assume that $\Sigma$ is a multigraph, that is, the third coordinate of its unit normal satisfies $N_3\geq 0$.

Then either $N_3\equiv 0$, or $N_3>0$ on $\Sigma$. If $N_3\equiv 0$, then $\Sigma$ is a portion of a plane or a cylinder.
\end{lemma}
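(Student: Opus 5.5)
The plan is to show that $N_3$ satisfies a linear elliptic equation on $\Sigma$ and then apply the strong maximum principle. Locally, where $\Sigma$ is smooth, write it as a graph over a suitable plane. The operator $F$ from \Cref{le2.6} (with \Cref{re2.4}, which allows the relaxed ellipticity of $f$ on $[c,a)$) is locally uniformly elliptic and Lipschitz in $(p,M)$.

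Fix a small piece of $\Sigma$ written as a graph $w$ over its tangent plane at a point. For a unit vector $e$, the height function $\langle X,e\rangle$ and the support function are standard candidates. Instead, I would use the invariance of \cref{e1.6} under rotations about horizontal axes. Rotating $\Sigma$ by a small angle $t$ about a horizontal axis produces a family of Weingarten surfaces with the same $f$. Locally each rotated piece is the graph of some $w_t$, and $F(Dw_t,D^2w_t)=0$. Subtracting and dividing by $t$, as in the derivation of \cref{e2.1}, the difference quotients $(w_t-w)/t$ satisfy a linear equation $a^{ij}\phi_{ij}+b^i\phi_i=0$ with bounded measurable coefficients, which are bounded by the Lipschitz estimates of \Cref{le2.6}. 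Letting $t\to0$ (using $C^{2,\alpha}$ bounds from \Cref{le2.2}), the limit is the normal component of the rotational Killing field, i.e.\ a Jacobi-type function. Combining the rotations about the two horizontal axes, together with the vertical translation (whose normal component $\langle N,e_3\rangle=N_3$ arises the same way from vertical translations $w_t=w+t\,(\text{normal shift})$), I get that $N_3$, expressed in local graph coordinates, is a weak (strong $W^{2,p}$) solution of a linear uniformly elliptic equation without zeroth-order term. Since $N_3\ge0$, the strong maximum principle (or Harnack inequality, \cite[Corollary 9.25]{MR1814364}) gives, on each connected local chart, either $N_3\equiv0$ or $N_3>0$. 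Connectedness of $\Sigma$ then propagates the alternative globally, because the zero set of $N_3$ is open (by the maximum principle) and closed. This first part is the main obstacle. Because $f$ is only Lipschitz, the linearization is not classical, so I would carefully justify the difference-quotient argument via the Lipschitz bounds \cref{e2.6}, which give a measurable-coefficient linear equation. For the translation case this works directly: the vertical translate $w+t$ against $w$ yields exactly the equation of \cref{e2.1} type, with measurable coefficients.

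Next, assume $N_3\equiv0$. Then $\Sigma$ is vertical everywhere, so $e_3$ is tangent and the integral curves of $e_3$ are vertical lines contained in $\Sigma$. Hence $\Sigma$ is locally a vertical cylinder $\gamma\times\mathbb{R}$ over a planar curve $\gamma$. Its principal curvatures are $0$ (vertical direction) and the curvature $k$ of $\gamma$. Relation \cref{e1.6} with $\kappa_1\ge\kappa_2$ then forces either $k\equiv0$ or $\{k,0\}$ to lie on the graph of $f$. Since $f$ is strictly decreasing with $f(c)=c$, the set of admissible values of $k$ is discrete: at most the values with $f(k)=0$ or $f(0)=k$. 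By continuity $k$ is constant, so $\gamma$ is a line or a circle and $\Sigma$ is a portion of a plane or of a circular cylinder.
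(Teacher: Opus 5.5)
Your proposal is correct in outline, and its first half is the paper's argument. Only the vertical translation matters; the rotations about horizontal axes can be dropped. At a point $q_0$ with $N_3(q_0)=0$, write $\Sigma$ as a graph $x=h(y,z)$ over its vertical tangent plane. The vertical translates are $h(y,z-t)$, and their difference quotients tend to $-h_z=\sqrt{1+|Dh|^2}\,N_3$. The paper shows exactly that $v=h_z\le 0$ with $v(0)=0$ lies in the Pucci class $S(0)$, and then applies the strong maximum principle.

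Your remark about ``the vertical translate $w+t$'' only makes sense over a horizontal plane, where the quotient is the constant $1$ and carries no information. Over a plane with normal $\nu_0$, the translate is $w(\cdot-te_3^T)+t\langle e_3,\nu_0\rangle$.

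You also cannot let $t\to0$ in $a^{ij}_t\phi^t_{ij}+b^i_t\phi^t_i=0$ and keep a linear equation, since the measurable coefficients and $D^2\phi^t$ converge at best weakly. The right formulation is this: each $\phi^t$ is a classical solution of a linear equation with ellipticity constants independent of $t$, so it belongs to the Pucci class. That class is closed under locally uniform convergence (viscosity stability), so the limit lies in $S(\bar\lambda,\bar\Lambda,\mu,0)$, where the strong maximum principle holds.

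Your second half takes a genuinely different and more elementary route. The paper compares $\Sigma$ with the vertical plane or cylinder $C$ tangent at $q_0$. The difference $h=h^1-h^2$ satisfies a linear equation which, since $h$ is independent of $z$, becomes the ODE $a^{11}h_{11}+a^1h_1=0$ with $h(0)=h_1(0)=0$, so $h\equiv0$.

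You instead observe that $e_3$ is tangent, so $\Sigma$ is locally $\gamma\times\mathbb{R}$ with principal curvatures $0$ and $k$. Since $f$ is strictly decreasing with $f(c)=c$, the relation $\kappa_2=f(\kappa_1)$ allows at most one value of $k$: $k=0$ when $c=0$, and the zero $k_0>c$ of $f$ when $c>0$. Hence $k=2\mathcal{H}$ is constant and $\gamma$ is a line or a circle.

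What your route buys:
\begin{itemize}
\item it avoids linearizing between two different solutions and the ODE-uniqueness step;
\item it makes explicit what happens when $f$ has no admissible zero (then $N_3$ can never vanish), a case the paper's choice of the comparison surface $C$ leaves implicit.
\end{itemize}
The paper's version, in exchange, stays inside the graph-comparison framework used throughout the article.
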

\proof Since the fully nonlinear operator $F$ (see \cref{e4.2}) is not smooth, the original proof cannot be applied; in particular, (2.10) in \cite{MR4684292} is no longer available. We provide an alternative proof here.

As in the proof of \cite[Lemma 2.10]{MR4684292}, we assume that there exists $q_0\in \Sigma$ satisfying $N_3(q_0)=0$. Without loss of generality, we can suppose that $q_0$ is the origin and its unit normal is $N(q_0) = (1,0,0)$. Let $C$ denote the vertical plane or the vertical cylinder (depending on whether $c=0$ or $c\neq 0$) that is tangent to $\Sigma$ at $q_0$, with the same orientation. Then $\Sigma$ and $C$ can be seen around the origin as graphs $x = h^i(y,z), i = 1,2$ over their common tangent plane, and $h^1, h^2$ are solutions to the same fully nonlinear elliptic PDE. Note that $h^2\equiv 0$ (the vertical plane case) or $h^2$ is independent of $z$ (the vertical cylinder case).

Let $v=\partial h^1/\partial z$. Then $v$ belongs to the Pucci's class $S(0)$ (see for instance the second paragraph of the proof of \cite[Lemma 4.1]{MR3246039}). Note that $v\leq 0$ (i.e., $N_3\geq 0$) and $v(0)=0$ (i.e., $N_3(q_0)=0$). By the strong maximum principle, $v\equiv 0$. That is, $h^1$ is independent of $z$ as well. Then, the function $h=h^1-h^2$ is a solution of the following linear elliptic equation
\begin{equation*}
a^{ij}h_{ij}+a^ih_i=0
\end{equation*}
for some $a^{ij}$ and $a^{i}$ (depending on $h_1,h_2$). As $h$ is independent of $z$, the equation above reduces to
\begin{equation*}
a^{11}h_{11}+a^1h_1=0.
\end{equation*}
Since $C$ is tangent to $\Sigma$ at the origin, $h(0)=0$ and $h_1(0)=0$. Hence, by the ODE theory, $h\equiv 0$, and $\Sigma$ is a piece of the plane or the cylinder. ~\qed\bigskip

Theorem 4.2 in \cite{MR4684292} proves a Bernstein theorem for elliptic Weingarten surfaces. Thanks to \Cref{le9.1} (instead of Lemma 2.10 in \cite{MR4684292}) this result can be generalized. Specifically, it can be shown that there are no complete elliptic Weingarten multigraphs with $f(c)=c\neq 0$ and bounded second fundamental form:
\begin{theorem}\label{le9.2}
There is no complete multigraph with bounded second fundamental form satisfying \cref{e1.6} for some $f\in C^{1}_{loc}[c,a)$, with $a>c> 0$ and $f'<0$.
\end{theorem}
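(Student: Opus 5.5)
The plan is to argue by contradiction, combining \Cref{le9.1}, compactness coming from the curvature bound, and a comparison with the umbilic sphere of radius $1/c$. That sphere is available because $f(c)=c>0$, and it satisfies \cref{e1.6} with $\kappa_1=\kappa_2=c$. I read ``multigraph'' as in \cite{MR4684292}, i.e.\ $N_3>0$ on $\Sigma$; with the weaker reading $N_3\geq 0$, the vertical cylinder of radius $1/c$ would be a counterexample. So suppose $\Sigma$ is complete, $N_3>0$, $|\sigma|\leq C_1$, and $\kappa_2=f(\kappa_1)$ with $f(c)=c>0$. Note first that no plane satisfies the relation: $\kappa_1\geq c>0$ always. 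Hence the only solutions with $N_3\equiv 0$ are vertical cylinders, by \Cref{le9.1}.

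\textbf{Step 1: local compactness.} By the bound $|\sigma|\leq C_1$ (cf.\ \cite[Proposition 2.3]{MR2669367}), every $p\in\Sigma$ has a neighbourhood that is a graph over the disk $D_\delta$ of its tangent plane, with $C^2$ norm at most $\delta$, where $\delta$ depends only on $C_1$. On this range the equation is uniformly elliptic by \Cref{le2.6} and \Cref{re2.4}, so \Cref{le2.2} upgrades these bounds to uniform $C^{2,\beta}$ bounds. Consequently, for any sequence $p_n\in\Sigma$, the translates $\Sigma-p_n$ subconverge in $C^2_{loc}$ to a complete surface $\Sigma_\infty$. This limit satisfies the same relation, has $|\sigma|\leq C_1$, and has $N_3\geq 0$.

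\textbf{Step 2: the case $\inf_\Sigma N_3=\delta_0>0$.} Here the vertical projection $\Sigma\to\mathbb{R}^2$ is a local diffeomorphism. Since $|Du|\leq \delta_0^{-1}$, horizontal paths lift to paths of bounded length, so completeness makes the projection a covering of $\mathbb{R}^2$. Hence $\Sigma$ is an entire graph of some $u$ with bounded gradient and upward normal. Take the lower hemisphere
\[
w_h(x)=h-\sqrt{c^{-2}-|x-x_0|^2}\quad\text{on } B_{1/c}(x_0).
\]
With the upward normal it has $\kappa_1=\kappa_2=c$, so it solves \cref{e1.6}. For $h$ large we have $w_h>u$ on the disk; we then decrease $h$ until first contact. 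Contact cannot occur at the equator: there $w_h$ has infinite slope while $u$ is Lipschitz, so just inside the disk $u$ would exceed $w_h$. Hence the first contact is at an interior point, where the graph of $w_h$ lies above $\Sigma$ and the two normals agree. The (interior) maximum principle for \cref{e1.6} then forces $\Sigma$ to coincide locally with the sphere, and by analytic continuation globally. This contradicts $\Sigma$ being an entire graph.

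\textbf{Step 3: the case $\inf_\Sigma N_3=0$ (expected main obstacle).} Choose $p_n$ with $N_3(p_n)\to0$ and pass to the limit $\Sigma_\infty$ from Step 1. Then $N_3\geq 0$ on $\Sigma_\infty$ and $N_3(0)=0$, so \Cref{le9.1} shows that $\Sigma_\infty$ is the vertical cylinder of radius $1/c$. Thus arbitrarily large pieces of $\Sigma$ are $C^2$-close to a vertical cylinder while still having $N_3>0$. To rule this out, the first thing I would try is to rerun the sphere-sliding argument along a horizontal direction inside this cylinder. Near $p_n$, $\Sigma$ is a normal graph over a long cylinder piece $Z$. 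A sphere of radius slightly less than $1/c$, or a suitable rotational Weingarten barrier from \cite{MR4557820}, placed inside $Z$, can be pushed along the axis. The positivity $N_3>0$ is to play the role that bounded slope played in Step 2, namely to prevent boundary contact. An alternative, if this fails, is to exploit that $N_3>0$ satisfies a linearized equation $a^{ij}(N_3)_{ij}+b^i(N_3)_i+c_0N_3=0$ whose zeroth-order coefficient is positive and comparable to $|\sigma|^2\geq c^2$. Positivity of $N_3$ then yields stability of $\Sigma$, and stability together with $\kappa_1\geq c$ gives a bound on the intrinsic radius, contradicting completeness. Making this rigorous with $f$ only $C^1$ needs the mollification device from the proof of \Cref{th5.1}, which is why I expect this step to be the hardest.
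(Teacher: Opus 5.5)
Your Steps 1 and 2 are sound. In Step 2, replace ``analytic continuation'' by an open--closed argument based on the strong maximum principle, since $f$ is only $C^1$. Step 3 reaches the same configuration as the argument of \cite[Theorem 4.2]{MR4684292}, which the paper invokes: a limit that is a vertical cylinder. But Step 3 is where the theorem is decided, and neither of your suggestions closes it.

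First a correction that matters here. A cylinder satisfying \cref{e1.6} has principal curvatures $t_0$ and $0$ with $f(t_0)=0$, so its radius is $1/t_0$ with $t_0>c$, not $1/c$. If $f$ has no zero on $[c,a)$, no such cylinder exists and \Cref{le9.1} already gives the contradiction. Consequently the umbilic sphere of radius $1/c$ does not fit inside $Z$. Any sphere of radius $<1/t_0$ satisfies $\kappa_2>f(\kappa_1)$, i.e.\ it is more curved than the equation allows. Touching $\Sigma$ from the concave side with such a sphere is compatible with the maximum principle, so sliding yields nothing: being $C^2$-close to a cylinder on large compact sets is not contradictory at zeroth order.

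Your second route (positivity of $N_3$ $\Rightarrow$ stability $\Rightarrow$ radius bound) is not available as stated. The operator is not in divergence form and carries first-order terms. At umbilic points of $\Sigma$, where $F$ is only Lipschitz, $N_3$ satisfies no genuine linearized equation. Mollifying $F$ changes the equation and does not make $N_3$ a Jacobi field of $\Sigma$.

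The missing idea is to linearize only at the limit cylinder, which is what the paper's remark about \cite[(4.3)]{MR4684292} refers to. Since $\Sigma-p_n$ is $C^2$-close on large compact sets to $Z$, and $Z$ has no umbilics, $F$ is $C^1$ near the relevant values of $(p,M)$. This is exactly why the theorem assumes $f\in C^1$, and why no mollification is needed. Normalize $\phi_n=N_3/N_3(p_n)$, or the $z$-derivative of local graphs over tangent planes of $Z$. The Harnack inequality and $C^{1,\alpha}$ estimates then give a limit $\phi>0$ on all of $Z$ solving the Jacobi equation of the cylinder. With $s$ the arclength on the circles and $z$ the axial coordinate, this equation is $\phi_{zz}+|f'(t_0)|\,(\phi_{ss}+t_0^2\phi)=0$. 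Averaging over the circles, $\bar\phi(z)$ solves $\bar\phi''+|f'(t_0)|\,t_0^2\,\bar\phi=0$ on $\mathbb{R}$, so it must change sign, contradicting $\phi>0$. Without this first-order argument, or an equivalent one, your proposal does not prove the theorem.
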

\proof The proof follows verbatim that of \cite[Theorem 4.2]{MR4684292}. The only step in which the smoothness of the fully nonlinear elliptic operator $F$ (see \cref{e4.2}) is required is in the linearization of the operator (see \cite[(4.3), p. 1909]{MR4684292}). Observe that the linearization is performed at the cylinder, where
$F$ is indeed as smooth as
$f$, namely of class  $C^1$, since the lack of smoothness of
$F$ occurs only at umbilical points of surfaces. ~\qed\bigskip

We now extend Theorem 5.2 in \cite{MR4684292} by establishing the existence of curvature estimates for general functions $f$:
\begin{theorem}\label{th8.1}
Let $\Sigma$ be a complete surface in $\mathbb{R}^3$, possibly with boundary $\partial \Sigma$, and whose Gauss map image $N(\Sigma)$ is contained in an open hemisphere of $\mathbb{S}^2$. Assume that $\Sigma$ satisfies a uniformly elliptic Weingarten equation \cref{e1.6}--\cref{e1.6-2} where $f\in C^{0,1}_{loc}[c,+\infty)$ for some $c\geq 0$.

Then, for every $d > 0$ there exists a constant $C = C(\Lambda,c,d)$ such that for each $p\in \Sigma$ with $d(p,\partial \Sigma) \geq d$, it holds
\begin{equation*}
|\sigma(p)| \leq  C.
\end{equation*}
Here, $d$ and $|\sigma|$ denote, respectively, the distance function in $\Sigma$ and the norm of
the second fundamental form of $\Sigma$.

In particular, if $\Sigma$ is of minimal type, i.e., $f(0)=0$, we have
\begin{equation}\label{e9.4}
|\sigma(p)| \leq  \frac{C}{d(p,\partial \Sigma)},
\end{equation}
where $C$ depends only on $\Lambda$.
\end{theorem}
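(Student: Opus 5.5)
We argue by contradiction through a blow-up, in the same way as \Cref{le5.2}, but now at interior points and with the Gauss-map hypothesis taking the place of the Lipschitz bound. Suppose the first claim fails for some $d>0$. Then there are surfaces $\Sigma_n$ satisfying $\kappa_2=f_n(\kappa_1)$, all with the same $\Lambda$ and $c$, each with Gauss map in an open hemisphere, and points $p_n$ with $d(p_n,\partial\Sigma_n)\ge d$ and $|\sigma_n(p_n)|\to\infty$.

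\textbf{Step 1: choose good blow-up points.} We apply the standard point-picking argument on the intrinsic ball $B_{\Sigma_n}(p_n,d/2)$. Maximize $|\sigma_n(q)|\,d(q,\partial B_{\Sigma_n}(p_n,d/2))$ over that ball to get points $q_n$ with $\lambda_n:=|\sigma_n(q_n)|\to\infty$. These points satisfy $|\sigma_n|\le 2\lambda_n$ on an intrinsic ball around $q_n$ of radius $r_n$ with $r_n\lambda_n\to\infty$.

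\textbf{Step 2: rescale and pass to a limit.} Translate $q_n$ to the origin and scale by $\lambda_n$. The rescaled surfaces $\tilde\Sigma_n$ satisfy $\kappa_2=g_n(\kappa_1)$ with $g_n(t)=\lambda_n^{-1}f_n(\lambda_n t)$ and $g_n(\lambda_n^{-1}c)=\lambda_n^{-1}c\to0$. Their slopes lie in $[-1/\Lambda,-\Lambda]$, so after a subsequence $g_n\to g$ locally uniformly, with $g(0)=0$ and $g$ uniformly elliptic. The rescaled second fundamental forms satisfy $|\tilde\sigma_n|\le2$ and $|\tilde\sigma_n(0)|=1$, and the Gauss maps still lie in open hemispheres.

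We use \cite[Proposition 2.3]{MR2669367} to write $\tilde\Sigma_n$ locally as graphs of uniform size over tangent disks. By \Cref{le2.6}, these graphs solve uniformly elliptic equations with uniform constants, so \Cref{le2.2} gives uniform $C^{2,\beta}$ bounds. A subsequence then converges in $C^2$ on compact sets to a complete surface $\Sigma_\infty$ without boundary, satisfying $\kappa_2=g(\kappa_1)$ (minimal type), with $|\sigma_\infty(0)|=1$ and Gauss map in a closed hemisphere.

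\textbf{Step 3: the Liouville step (main obstacle).} We must show that such a $\Sigma_\infty$ is a plane, which contradicts $|\sigma_\infty(0)|=1$.

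First, by \Cref{le9.1}, either $N_3\equiv0$ or $N_3>0$. If $N_3\equiv0$, then $\Sigma_\infty$ is a plane or a cylinder. A cylinder has $\kappa_1>0$ and $\kappa_2=0$, which is impossible for minimal type since $g(t)<0$ for $t>0$. So $\Sigma_\infty$ is a plane, a contradiction.

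If $N_3>0$, then $\Sigma_\infty$ is a complete multigraph with bounded second fundamental form. We would argue as in \cite[Theorem 4.2]{MR4684292}: such a surface is an entire graph $v$. Away from the vertical directions $|Dv|$ should be bounded, and $v$ solves the linear uniformly elliptic equation \cref{e3.6} because $F(p,0)=0$. The Liouville argument from case (i) of \Cref{le5.2} then forces $v$ to be affine.

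The delicate part is controlling $|Dv|$ when $N_3$ is only positive rather than bounded below. I would first try to apply the whole blow-up argument to the normal component $\langle N,a\rangle$ along the fixed axis $a$ of the hemisphere, together with the maximum principle of \Cref{le9.1} on the limit. If that fails, I would instead use quasiregularity of the Gauss map: \cref{ec1} makes it quasiregular into an open hemisphere, so a Liouville-type result for quasiregular maps of the parabolic surface $\Sigma_\infty$ (which has bounded curvature and $\mathcal{K}\le0$) gives a constant Gauss map.

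\textbf{Step 4: the scale-invariant estimate \cref{e9.4}.} When $c=0$ the equation is invariant under homotheties. Given $p$, scale by $1/d(p,\partial\Sigma)$, so that the distance to the boundary becomes $1$, and apply the first estimate with $d=1$ to get a constant depending only on $\Lambda$. Scaling back gives \cref{e9.4}.
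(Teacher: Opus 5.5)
Your Steps 1, 2 and 4 match what the paper does. It runs the blow-up of \cite[Theorem 5.2]{MR4684292} with $\tilde f_n(t)=\lambda_n^{-1}f_n(\lambda_n t)$ and $|\hat\sigma_n|\le 2$, gets uniform $C^{2,\alpha}$ estimates from \Cref{le2.6} and \Cref{le2.2} (using $|F_n(0,0)|\le C\lambda_n^{-1}$), passes to a minimal-type limit, and uses scaling for \cref{e9.4}. The case $N_3\equiv 0$ of your Step 3 is also correctly handled by \Cref{le9.1}.

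The gap is the case $N_3>0$, which is the whole Liouville content of the theorem. Your claim that a complete multigraph with bounded second fundamental form is an entire graph is not justified. The vertical projection is a local diffeomorphism, but you can only conclude that it is a covering of $\mathbb{R}^2$ when $\inf N_3>0$. That would give an entire graph with bounded gradient, to which your Liouville argument via \cref{e3.6} applies. Nothing in the blow-up provides $\inf N_3>0$: the hemispheres containing $N(\Sigma_n)$ are open but not uniformly so, so $\inf N_3=0$ is entirely possible on $\Sigma_\infty$. Citing \cite[Theorem 4.2]{MR4684292} does not fill this. As recorded in \Cref{le9.2}, that result concerns $f(c)=c\neq 0$, and it does not assert that a minimal-type multigraph is an entire graph.

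Neither fallback closes the gap.
\begin{itemize}
\item Translating $\Sigma_\infty$ along points where $N_3\to 0$ and applying \Cref{le9.1} to the limit just produces a vertical plane. That is consistent with everything and yields no contradiction.
\item For the quasiregular route, you give no reason why $\Sigma_\infty$ should be parabolic. Bounded curvature with $\mathcal{K}\le 0$ does not imply parabolicity, since nonpositive curvature is the wrong sign for Blanc--Fiala--Huber type criteria. Without parabolicity, a quasiregular Gauss map into a hemisphere need not be constant.
\end{itemize}

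What is missing is the Bernstein-type theorem of \cite{MR4684292} for complete multigraphs with quasiconformal Gauss map. The paper reaches it by observing that the limit is a uniformly elliptic Weingarten surface of minimal type. Its Gauss map is therefore quasiregular by \cref{ec1}, and its second fundamental form is bounded. This lets the paper verify properties (P1)--(P5) of \cite{MR4684292} and then run the rest of the proof of \cite[Theorem 5.2]{MR4684292}. You need to invoke that theorem, or prove it, for the case $N_3>0$. As written, Step 3 is a list of possible strategies rather than an argument.
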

\proof Again, we can use essentially the same proof as in \cite[Theorem 5.2]{MR4684292}. So, we will focus only on the parts of the proof that change due to the lack of differentiability of the associated PDE. The condition \cref{e1.6}  in \cite{MR4684292} is used  to ensure that the fully nonlinear elliptic operator $F$ (see \cref{e4.2}) is $C^1$. The authors then apply Nirenberg's $C^{2,\alpha}$ estimate. In fact, this estimate remains valid even when the operator $F$ is not smooth (see \cite[Corollary 1.2]{MR5005582}).

We use the same notation as in the proof of \cite[Theorem 5.2]{MR4684292} and only point out the main differences below. As in the proof of \cite[Theorem 5.2]{MR4684292}, we prove the result by contradiction and take a sequence of blow-ups. In our setting, equation (5.4) in \cite{MR4684292} is replaced by the following:
\begin{equation}\label{e9.1}
\kappa_2=\tilde f_n(\kappa_1), \quad \tilde{f}_n(t):=\lambda_n^{-1}f_n(\lambda_n t),
\end{equation}
where $\lambda_n\to +\infty$. Note that $\tilde{f}_n$ are uniformly elliptic as well (with the same $\Lambda$), i.e.,
\begin{equation}\label{e9.3}
 -\frac{1}{\Lambda}\leq \tilde{f}_n'(t)\leq  -\Lambda,~\forall ~t\geq c_n,
\end{equation}
where $c_n=\lambda_n^{-1}c$ and $c$ is the common umbilical constant for $f_n$.

In addition, we have the uniform curvature estimate (see \cite[p. 1913]{MR4684292}):
\begin{equation*}
|\hat{\sigma}_n|\leq 2\quad\mbox{in}~~\hat{D}_n.
\end{equation*}
And by representing the surfaces locally as graphs of $v_n$, we have the following uniform estimate:
\begin{equation*}
\|v_n\|_{C^2(B_{\delta_0})}\leq \mu_0.
\end{equation*}
Hence, by \Cref{le2.6}, $v_n$ are solutions of fully nonlinear uniformly elliptic equations:
\begin{equation*}
F_n(Dv_n,D^2v_n)=0\quad\mbox{in}~~B_{\delta_0}.
\end{equation*}
Furthermore, by the uniform ellipticity of $\tilde{f}_n$ and noting $f_n(c)=c$,
\begin{equation*}
\begin{aligned}
|F_n(0,0)|=|\tilde{f}_n(0)|=\lambda_n^{-1}|f_n(0)|=\lambda_n^{-1}|f_n(c)-f'_n(\xi_n)c|
=\lambda_n^{-1}|c-f'_n(\xi_n)c|\leq C\lambda_n^{-1},
\end{aligned}
\end{equation*}
where $\xi_n\in [0,c]$ and $C$ depends only on $\Lambda$ and $c$.

Therefore, by the interior $C^{2,\alpha}$ regularity for $v_n$ (see \Cref{le2.2}), we have the same uniform estimate as in \cite[(5.13)]{MR4684292} for $v_n$:
\begin{equation}\label{e9.2}
\|v_n\|_{C^{2,\alpha}(B_{\delta'})}\leq C',~\forall ~n\geq 1.
\end{equation}
Then there exists $v^0\in C^{2,\alpha}(B_{\delta'})$ such that (up to a subsequence)
\begin{equation*}
v_n\to v^0\quad\mbox{in}~~C^{2}(B_{\delta'}).
\end{equation*}

Note that $\tilde{f}_n$ are Lipschitz continuous with a uniform Lipschitz constant (see \cref{e9.3}). In addition,
\begin{equation*}
  \tilde{f}_n(c_n)=\lambda_n^{-1}f_n(c)=\lambda_n^{-1}c=c_n
\end{equation*}
and $c_n\to 0$. Hence, up to a subsequence, there exists $f^0\in C^{0,1}_{loc}[0,+\infty)$ with $f^0(0)=0$ such that
\begin{equation*}
\tilde{f}_n\to f^0\quad\mbox{in}~~C_{loc}[0,+\infty).
\end{equation*}
Then $v^0$ is a solution of
\begin{equation*}
\kappa_2=f^0(\kappa_1)\quad\mbox{in}~~B_{\delta'}.
\end{equation*}
That is, the graph of $v^0$ is a uniformly elliptic Weingarten surface of minimal type. Thus, its Gauss map is a quasiregular mapping (see \Cref{S2}). It is then straightforward to verify that all the properties (P1)–(P5) in \cite[pp. 1915--1916]{MR4684292} hold. The remaining part of the proof is the same as that of \cite[Theorem 5.2]{MR4684292} and we arrive at a contradiction.

If $\Sigma$ is of minimal type, i.e., $f(0)=0$, we can obtain \cref{e9.4} by a standard scaling argument.~\qed\bigskip

As a consequence of the two preceding theorems, Theorem 6.2 in \cite{MR4684292} can be generalized, with the same proof, to a more general setting:
\begin{theorem}\label{th9.2}
Let $\Sigma$ be a complete multigraph in $\mathbb{R}^3$ that satisfies an elliptic Weingarten equation \cref{e1.6} with $f\in C^{0,1}_{loc}[0,a)$, $f(0)=0$ and
\begin{equation*}
-\infty<\inf_{t\in [c,a']}f'(t)\leq \sup_{t\in [c,a']}f'(t)<0,~\forall ~a'<a.
\end{equation*}
Assume~\\
(i) $a\in \mathbb{R}$ and
\begin{equation*}
f(t)\to -\infty\quad\mbox{as }~~t\to a.
\end{equation*}
or ~\\
(ii) $a=+\infty$ and for some $b\in \mathbb{R}$,
\begin{equation*}
f(t)\to b\quad\mbox{as }~~t\to +\infty.
\end{equation*}
Then $\Sigma$ is a plane.
\end{theorem}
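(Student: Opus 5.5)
The plan is to reduce to the curvature estimate of \Cref{th8.1}, after using \Cref{le9.1} to separate off the degenerate case where $N_3$ vanishes somewhere.

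\textbf{Step 1: splitting into cases.} First I would apply \Cref{le9.1} to $\Sigma$. Its hypotheses hold with $c=0$, since $f\in C^{0,1}_{loc}[0,a)$ has $f(0)=0$ and negative derivative bounded on compact subintervals. So either $N_3\equiv 0$, or $N_3>0$ on all of $\Sigma$. In the first alternative $\Sigma$ is a portion of a plane or of a cylinder. A cylinder would have umbilical constant $c\neq 0$, contradicting $f(0)=0$, so that alternative gives a plane (by completeness a full plane) and we are done. Hence we may assume $N_3>0$ everywhere, i.e.\ $N(\Sigma)$ lies in the open upper hemisphere.

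\textbf{Step 2: a global bound on the second fundamental form.} This is the step I expect to be the main obstacle. I would argue by contradiction with a blow-up along the lines of the proof of \Cref{th8.1}. Suppose $|\sigma|$ is unbounded. Use the standard point-picking lemma to choose $p_n\in\Sigma$ with $\lambda_n:=|\sigma(p_n)|\to\infty$ and $|\sigma|\leq 2\lambda_n$ on intrinsic balls $B(p_n,r_n)$ with $r_n\lambda_n\to\infty$. Rescale by $\lambda_n$ around $p_n$. The rescaled surfaces $\Sigma_n$ satisfy $\kappa_2=f_n(\kappa_1)$ with $f_n(t)=\lambda_n^{-1}f(\lambda_n t)$, have $|\sigma_n|\leq 2$, $|\sigma_n(0)|=1$, and $N_3>0$.

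Since $\kappa_1\ge 0\ge\kappa_2$ (because $f$ is decreasing with $f(0)=0$), the two hypotheses force the relation to degenerate in the limit:
\begin{itemize}
\item in case (i), $\kappa_1<a/\lambda_n\to 0$;
\item in case (ii), $\kappa_2\geq \min(b,0)/\lambda_n\to 0$.
\end{itemize}
The uniform bound $|\sigma_n|\leq 2$ gives local graph representations with uniform $C^{1,1}$ bounds. Hence a subsequence converges in $C^{1,\beta}$ to a complete surface $\Sigma_\infty$ with $N_3\geq 0$, whose curvatures satisfy $\kappa_1\equiv 0$ in case (i), or $\kappa_2\equiv 0$ in case (ii), in the a.e./viscosity sense. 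In either case $\Sigma_\infty$ is a complete flat surface, hence a generalized cylinder. I would then use the nondegeneracy $|\sigma_n(0)|=1$ to show $\Sigma_\infty$ is not a plane. Here I would try to upgrade the convergence to $C^2$ near $0$: near a point of $\Sigma_n$ with $|\sigma_n|\sim 1$ there are no umbilics, so $F_n$ is as smooth as $f_n$, which is the same observation used in \Cref{le9.2}.

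The contradiction would then follow as in \cite[Theorem 6.2]{MR4684292}. A non-planar complete flat multigraph must be a vertical cylinder, so $N_3\equiv 0$ on $\Sigma_\infty$. A Harnack/strong maximum principle argument applied to $N_3$, which satisfies a linear elliptic equation in the Pucci class as in the proof of \Cref{le9.1}, is then incompatible with $N_3>0$ on $\Sigma_n$ converging to $0$ near a point of definite curvature. If this is delicate, I would instead linearize at the limit cylinder as in \Cref{le9.2}.

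\textbf{Step 3: concluding that $\Sigma$ is a plane.} Once $|\sigma|\leq C_1$ on $\Sigma$, we have $\kappa_1\in[0,C_1]$. In case (i) this range stays a definite distance below $a$, because $f(\kappa_1)=\kappa_2\geq -C_1$ and $f\to-\infty$ at $a$. So $f$ is only evaluated on a compact interval $[0,a']$ with $a'<a$, where $-\Lambda^{-1}\leq f'\leq-\Lambda$ for some $\Lambda$. Replace $f$ by an extension $\bar f\in C^{0,1}_{loc}[0,\infty)$ with $\bar f=f$ on $[0,a']$ and $\bar f'=-\Lambda$ afterwards. Then $\Sigma$ is a complete uniformly elliptic Weingarten surface of minimal type, without boundary, whose Gauss map lies in an open hemisphere by Step 1. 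By \cref{e9.4} in \Cref{th8.1}, for every $p\in\Sigma$ we get $|\sigma(p)|\leq C/d(p,\partial\Sigma)=0$, since $\partial\Sigma=\emptyset$ and distances to it are infinite. Hence $\sigma\equiv 0$ and $\Sigma$ is a plane.
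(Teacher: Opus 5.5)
Your Steps 1 and 3 are fine. \Cref{le9.1} with $c=0$ leaves only a vertical plane when $N_3\equiv 0$. Once $|\sigma|$ is bounded, $f$ is only evaluated on a compact subinterval of $[0,a)$, so after a linear extension \cref{e9.4} with $\partial\Sigma=\emptyset$ gives $\sigma\equiv 0$.

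The gap is Step 2, and it is not a technicality. There are two problems.

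First, the rescaled relations $f_n(t)=\lambda_n^{-1}f(\lambda_n t)$ satisfy $f_n'(t)=f'(\lambda_n t)$. Wherever $|\sigma_n|\sim 1$, this samples $f'$ exactly where the hypotheses let it degenerate:
\begin{itemize}
\item in case (i), near $a$, where $|f'|$ is unbounded because $f\to-\infty$;
\item in case (ii), near $+\infty$, where the average of $|f'|$ over $[\lambda_n/2,2\lambda_n]$ tends to $0$ because $f$ is bounded.
\end{itemize}
Absence of umbilics makes $F_n$ as regular as $f_n$ (here only Lipschitz), but it does not restore uniform ellipticity. So \Cref{le2.2} gives no uniform $C^{2,\alpha}$ bound, the convergence is only $C^{1,\beta}$, and $|\sigma_n(0)|=1$ need not survive in the limit.

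Second, and decisively, even granting $C^2$ convergence there is no contradiction to be had. Your claim that a non-planar complete flat multigraph must be a vertical cylinder is false. The entire graphs $x_3=\frac12 x_2^2$ and $x_3=-\frac12 x_2^2$, with upward normal, are complete and flat, have $N_3>0$ and $|\sigma|\le 1=|\sigma(0)|$, and satisfy exactly the limiting relations $\kappa_2\equiv 0$ (case (ii)) and $\kappa_1\equiv 0$ (case (i)). The degenerate limit problem thus has non-planar entire solutions. The maximum principle for $N_3$ has nothing to act on, and no argument that only inspects the blow-up limit can yield the curvature bound.

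The missing idea is to use the a priori bound on one principal curvature that (i) and (ii) provide, namely $0\le\kappa_1<a$, respectively $b<\kappa_2\le 0$, and pass to a parallel surface instead of blowing up. In case (i), fix $t\in(0,1/a)$ and set $\Sigma_t=\{p+tN(p):p\in\Sigma\}$. With $A=-dN$, its differential is $I-tA$, with eigenvalues $1-t\kappa_1\in(1-ta,1]$ and $1-t\kappa_2\ge 1$. Hence:
\begin{itemize}
\item $\Sigma_t$ is immersed and complete, since its metric is at least $(1-ta)^2$ times that of $\Sigma$;
\item it has the same Gauss map, so after Step 1 its Gauss image lies in $\{N_3>0\}$;
\item its principal curvatures $\kappa_1/(1-t\kappa_1)\in[0,a/(1-ta))$ and $\kappa_2/(1-t\kappa_2)\in(-1/t,0]$ are bounded.
\end{itemize}
Their ratio $\frac{f(\kappa_1)}{\kappa_1}\cdot\frac{1-t\kappa_1}{1-tf(\kappa_1)}$ stays between two negative constants. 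Near $\kappa_1=0$ it is controlled by the bounds on $f'$, and it tends to $-(1-ta)/(ta)$ as $\kappa_1\to a$. So $\Sigma_t$ has bounded second fundamental form and quasiconformal Gauss map, with Gauss image in a closed hemisphere. The Hoffman--Osserman--Schoen type theorem of \cite{MR4684292} for such surfaces then makes $\Sigma_t$ a plane or a cylinder. A cylinder is excluded, since its Gauss image is a great circle, which does not lie in $\{N_3>0\}$. Hence $\Sigma_t$, and so $\Sigma$, is a plane.

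Case (ii) is identical with $\{p-sN(p)\}$ for $0<s<1/|b|$, or reduces to case (i) by reversing orientation. The relation satisfied by $\Sigma_t$ need not be uniformly elliptic near the end of its range, so this last step should use quasiconformality rather than \Cref{th8.1}.
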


With the aid of the results above, we conclude that, in addition to not requiring any symmetry assumption on  $f$, the main results, Theorems B and C in \cite[pp. 1889--1890]{MR4684292}, hold for functions $f\in C^1$, and Theorem D in \cite[p. 1890]{MR4684292} holds for $f\in C^{0,1}$.


\printbibliography

@book{MR1351007,
	TITLE        = {Fully nonlinear elliptic equations},
	AUTHOR       = {Caffarelli, Luis A. and Cabr\'{e}, Xavier},
	YEAR         = {1995},
	PUBLISHER    = {American Mathematical Society, Providence, RI},
	SERIES       = {American Mathematical Society Colloquium Publications},
	VOLUME       = {43},
	PAGES        = {vi+104},
	DOI          = {10.1090/coll/043},
	ISBN         = {0-8218-0437-5},
	URL          = {https://doi.org/10.1090/coll/043},
	MRCLASS      = {35J60 (35-01 35B45 35B65 35Dxx)},
	MRNUMBER     = {1351007},
	MRREVIEWER   = {P. Lindqvist}
}

@misc{lian2025liouville,
      title={Liouville theorems for fully nonlinear elliptic equations on half spaces},
      author={Yuanyuan Lian},
      year={2025},
      eprint={2511.16152},
      archivePrefix={arXiv},
      primaryClass={math.AP},
      url={https://arxiv.org/abs/2511.16152},
}

@article{MR454854,
	TITLE        = {Equations of mean curvature type in {$2$} independent variables},
	AUTHOR       = {Simon, Leon},
	YEAR         = {1977},
	JOURNAL      = {Pacific J. Math.},
	VOLUME       = {69},
	NUMBER       = {1},
	PAGES        = {245--268},
	ISSN         = {0030-8730,1945-5844},
	URL          = {http://projecteuclid.org/euclid.pjm/1102817107},
	FJOURNAL     = {Pacific Journal of Mathematics},
	MRCLASS      = {49F10 (35J20)},
	MRNUMBER     = {454854},
	MRREVIEWER   = {Klaus\ Steffen}
}

@article{MR975123,
	TITLE        = {A maximum principle at infinity for minimal surfaces and applications},
	AUTHOR       = {Langevin, R\'{e}mi and Rosenberg, Harold},
	YEAR         = {1988},
	JOURNAL      = {Duke Math. J.},
	VOLUME       = {57},
	NUMBER       = {3},
	PAGES        = {819--828},
	DOI          = {10.1215/S0012-7094-88-05736-5},
	ISSN         = {0012-7094},
	URL          = {https://doi.org/10.1215/S0012-7094-88-05736-5},
	FJOURNAL     = {Duke Mathematical Journal},
	MRCLASS      = {53A10 (35J60 49F10 58E12)},
	MRNUMBER     = {975123},
	MRREVIEWER   = {Marco Rigoli}
}

@book {MR1013786,
    AUTHOR = {Hopf, Heinz},
     TITLE = {Differential geometry in the large},
    SERIES = {Lecture Notes in Mathematics},
    VOLUME = {1000},
   EDITION = {Second},
      NOTE = {Notes taken by Peter Lax and John W. Gray,
              With a preface by S. S. Chern,
              With a preface by K. Voss},
 PUBLISHER = {Springer-Verlag, Berlin},
      YEAR = {1989},
     PAGES = {viii+184},
      ISBN = {3-540-51497-X},
   MRCLASS = {53-01 (01A75 53A05)},
  MRNUMBER = {1013786},
       DOI = {10.1007/3-540-39482-6},
       URL = {https://doi.org/10.1007/3-540-39482-6},
}

@article {MR4684292,
    AUTHOR = {Fern\'{a}ndez, Isabel and G\'{a}lvez, Jos\'{e} A. and Mira, Pablo},
     TITLE = {Quasiconformal {G}auss maps and the {B}ernstein problem for
              {W}eingarten multigraphs},
   JOURNAL = {Amer. J. Math.},
  FJOURNAL = {American Journal of Mathematics},
    VOLUME = {145},
      YEAR = {2023},
    NUMBER = {6},
     PAGES = {1887--1921},
      ISSN = {0002-9327},
   MRCLASS = {53A05 (30C65)},
  MRNUMBER = {4684292},
MRREVIEWER = {Jos\'{e} Miguel Manzano},
       DOI = {10.1353/ajm.2023.a913297},
       URL = {https://doi.org/10.1353/ajm.2023.a913297},
}

@article{MR4792754,
	TITLE        = {Quasiconformal mappings and a {B}ernstein type theorem over exterior domains in {$\Bbb R^2$}},
	AUTHOR       = {Li, Dongsheng and Liu, Rulin},
	YEAR         = {2024},
	JOURNAL      = {Calc. Var. Partial Differential Equations},
	VOLUME       = {63},
	NUMBER       = {8},
	PAGES        = {Paper No. 209, 13},
	DOI          = {10.1007/s00526-024-02808-3},
	ISSN         = {0944-2669},
	URL          = {https://doi.org/10.1007/s00526-024-02808-3},
	FJOURNAL     = {Calculus of Variations and Partial Differential Equations},
	MRCLASS      = {35B53 (35J60 35J96)},
	MRNUMBER     = {4792754}
}

@BOOK{MR1814364,
	title        = {Elliptic partial differential equations of second order},
	author       = {Gilbarg, David and Trudinger, Neil S.},
	year         = {2001},
	publisher    = {Springer-Verlag, Berlin},
	series       = {Classics in Mathematics},
	pages        = {xiv+517},
	isbn         = {3-540-41160-7},
	note         = {Reprint of the 1998 edition},
	mrclass      = {35-02 (35Jxx)},
	mrnumber     = {1814364}
}

@misc{LZ_asym,
	title        = {Asymptotic behavior for fully nonlinear elliptic equations in exterior domains},
	author       = {Lian, Yuanyuan and Zhang, Kai},
	year         = {2024},
	eprint       = {2401.05829},
	archivePrefix = {arXiv}
}

@article{MR4201294,
	TITLE        = {Maximal hypersurfaces over exterior domains},
	AUTHOR       = {Hong, Guanghao and Yuan, Yu},
	YEAR         = {2021},
	JOURNAL      = {Comm. Pure Appl. Math.},
	VOLUME       = {74},
	NUMBER       = {3},
	PAGES        = {589--614},
	DOI          = {10.1002/cpa.21929},
	ISSN         = {0010-3640},
	URL          = {https://doi.org/10.1002/cpa.21929},
	FJOURNAL     = {Communications on Pure and Applied Mathematics},
	MRCLASS      = {53A10 (49Q05 53A35)},
	MRNUMBER     = {4201294},
	MRREVIEWER   = {Andrew Bucki}
}

@article{MR452746,
	TITLE        = {A {H}\"older estimate for quasiconformal maps between surfaces in {E}uclidean space},
	AUTHOR       = {Simon, Leon},
	YEAR         = {1977},
	JOURNAL      = {Acta Math.},
	VOLUME       = {139},
	NUMBER       = {1-2},
	PAGES        = {19--51},
	DOI          = {10.1007/BF02392233},
	ISSN         = {0001-5962,1871-2509},
	URL          = {https://doi.org/10.1007/BF02392233},
	FJOURNAL     = {Acta Mathematica},
	MRCLASS      = {30C60 (49F10 53A10)},
	MRNUMBER     = {452746}
}

@article{MR4038557,
	TITLE        = {A {B}ernstein problem for special {L}agrangian equations in exterior domains},
	AUTHOR       = {Li, Dongsheng and Li, Zhisu and Yuan, Yu},
	YEAR         = {2020},
	JOURNAL      = {Adv. Math.},
	VOLUME       = {361},
	PAGES        = {106927, 29},
	DOI          = {10.1016/j.aim.2019.106927},
	ISSN         = {0001-8708},
	URL          = {https://doi.org/10.1016/j.aim.2019.106927},
	FJOURNAL     = {Advances in Mathematics},
	MRCLASS      = {35J60 (35J15 35J96 58J05)},
	MRNUMBER     = {4038557},
	MRREVIEWER   = {Chad R. Westphal}
}

@book {MR1929104,
    AUTHOR = {Hartman, Philip},
     TITLE = {Ordinary differential equations},
    SERIES = {Classics in Applied Mathematics},
    VOLUME = {38},
 PUBLISHER = {Society for Industrial and Applied Mathematics (SIAM),
              Philadelphia, PA},
      YEAR = {2002},
     PAGES = {xx+612},
      ISBN = {0-89871-510-5},
   MRCLASS = {34-01 (37-01)},
  MRNUMBER = {1929104},
       DOI = {10.1137/1.9780898719222},
       URL = {https://doi.org/10.1137/1.9780898719222},
}

@article {MR2652214,
    AUTHOR = {Aledo, Juan A. and Espinar, Jos\'e{} M. and G\'alvez, Jos\'e{}
              A.},
     TITLE = {The {C}odazzi equation for surfaces},
   JOURNAL = {Adv. Math.},
  FJOURNAL = {Advances in Mathematics},
    VOLUME = {224},
      YEAR = {2010},
    NUMBER = {6},
     PAGES = {2511--2530},
      ISSN = {0001-8708,1090-2082},
   MRCLASS = {53C42 (53A05)},
  MRNUMBER = {2652214},
MRREVIEWER = {Federico\ S\'anchez-Bringas},
       DOI = {10.1016/j.aim.2010.02.007},
       URL = {https://doi.org/10.1016/j.aim.2010.02.007},
}

@article {MR86338,
    AUTHOR = {Aleksandrov, A. D.},
     TITLE = {Uniqueness theorems for surfaces in the large. {I}},
   JOURNAL = {Vestnik Leningrad. Univ.},
  FJOURNAL = {Vestnik Leningrad. Univ.},
    VOLUME = {11},
      YEAR = {1956},
    NUMBER = {19},
     PAGES = {5--17},
   MRCLASS = {53.0X},
  MRNUMBER = {86338},
MRREVIEWER = {H.\ Busemann},
}

@article {MR74857,
    AUTHOR = {Chern, Shiing-Shen},
     TITLE = {On special {$W$}-surfaces},
   JOURNAL = {Proc. Amer. Math. Soc.},
  FJOURNAL = {Proceedings of the American Mathematical Society},
    VOLUME = {6},
      YEAR = {1955},
     PAGES = {783--786},
      ISSN = {0002-9939,1088-6826},
   MRCLASS = {53.0X},
  MRNUMBER = {74857},
MRREVIEWER = {L.\ W.\ Green},
       DOI = {10.2307/2032934},
       URL = {https://doi.org/10.2307/2032934},
}

@misc{EspinarMesa,
	title        = {In preparation},
	author       = {Espinar, Jos\'e{} M. and Mesa, Heber},
	year         = {2025},
}

@article {MR4182894,
    AUTHOR = {G\'alvez, Jos\'e{} A. and Mira, Pablo},
     TITLE = {Uniqueness of immersed spheres in three-manifolds},
   JOURNAL = {J. Differential Geom.},
  FJOURNAL = {Journal of Differential Geometry},
    VOLUME = {116},
      YEAR = {2020},
    NUMBER = {3},
     PAGES = {459--480},
      ISSN = {0022-040X,1945-743X},
   MRCLASS = {53A10 (53C42)},
  MRNUMBER = {4182894},
MRREVIEWER = {Martin\ L. P. Kilian},
       DOI = {10.4310/jdg/1606964415},
       URL = {https://doi.org/10.4310/jdg/1606964415},
}

@article {MR4237967,
    AUTHOR = {G\'alvez, Jos\'e{} A. and Mira, Pablo},
     TITLE = {Rotational symmetry of {W}eingarten spheres in homogeneous
              three-manifolds},
   JOURNAL = {J. Reine Angew. Math.},
  FJOURNAL = {Journal f\"ur die Reine und Angewandte Mathematik. [Crelle's
              Journal]},
    VOLUME = {773},
      YEAR = {2021},
     PAGES = {21--66},
      ISSN = {0075-4102,1435-5345},
   MRCLASS = {53C42 (53C30)},
  MRNUMBER = {4237967},
MRREVIEWER = {Marian-Ioan\ Munteanu},
       DOI = {10.1515/crelle-2020-0031},
       URL = {https://doi.org/10.1515/crelle-2020-0031},
}

@article {MR63082,
    AUTHOR = {Hartman, Philip and Wintner, Aurel},
     TITLE = {Umbilical points and {$W$}-surfaces},
   JOURNAL = {Amer. J. Math.},
  FJOURNAL = {American Journal of Mathematics},
    VOLUME = {76},
      YEAR = {1954},
     PAGES = {502--508},
      ISSN = {0002-9327,1080-6377},
   MRCLASS = {53.0X},
  MRNUMBER = {63082},
MRREVIEWER = {S.\ Chern},
       DOI = {10.2307/2372698},
       URL = {https://doi.org/10.2307/2372698},
}

@article {MR40042,
    AUTHOR = {Hopf, Heinz},
     TITLE = {\"Uber {F}l\"achen mit einer {R}elation zwischen den
              {H}auptkr\"ummungen},
   JOURNAL = {Math. Nachr.},
  FJOURNAL = {Mathematische Nachrichten},
    VOLUME = {4},
      YEAR = {1951},
     PAGES = {232--249},
      ISSN = {0025-584X,1522-2616},
   MRCLASS = {53.0X},
  MRNUMBER = {40042},
MRREVIEWER = {C.\ B.\ Allendoerfer},
       DOI = {10.1002/mana.3210040122},
       URL = {https://doi.org/10.1002/mana.3210040122},
}

@article {MR1262209,
    AUTHOR = {Rosenberg, Harold and Sa Earp, Ricardo},
     TITLE = {The geometry of properly embedded special surfaces in {${\bf
              R}^3$}, e.g., surfaces satisfying {$aH+bK=1$}, where {$a$} and
              {$b$} are positive},
   JOURNAL = {Duke Math. J.},
  FJOURNAL = {Duke Mathematical Journal},
    VOLUME = {73},
      YEAR = {1994},
    NUMBER = {2},
     PAGES = {291--306},
      ISSN = {0012-7094,1547-7398},
   MRCLASS = {53A10 (49Q05)},
  MRNUMBER = {1262209},
MRREVIEWER = {Jo\~ao\ Lucas Marques Barbosa},
       DOI = {10.1215/S0012-7094-94-07314-6},
       URL = {https://doi.org/10.1215/S0012-7094-94-07314-6},
}

@article {MR1738404,
    AUTHOR = {Sa Earp, Ricardo and Toubiana, Eric},
     TITLE = {Classification des surfaces de type {D}elaunay},
   JOURNAL = {Amer. J. Math.},
  FJOURNAL = {American Journal of Mathematics},
    VOLUME = {121},
      YEAR = {1999},
    NUMBER = {3},
     PAGES = {671--700},
      ISSN = {0002-9327,1080-6377},
   MRCLASS = {53A05 (53A10)},
  MRNUMBER = {1738404},
MRREVIEWER = {Rabah\ Souam},
       URL =
              {http://muse.jhu.edu/journals/american_journal_of_mathematics/v121/121.3sa_earp.pdf},
}

@article {MR1364263,
    AUTHOR = {Sa Earp, Ricardo and Toubiana, Eric},
     TITLE = {Sur les surfaces de {W}eingarten sp\'eciales de type minimal},
   JOURNAL = {Bol. Soc. Brasil. Mat. (N.S.)},
  FJOURNAL = {Boletim da Sociedade Brasileira de Matem\'atica. Nova S\'erie},
    VOLUME = {26},
      YEAR = {1995},
    NUMBER = {2},
     PAGES = {129--148},
      ISSN = {0100-3569},
   MRCLASS = {53A10},
  MRNUMBER = {1364263},
MRREVIEWER = {Karsten\ Grosse-Brauckmann},
       DOI = {10.1007/BF01236989},
       URL = {https://doi.org/10.1007/BF01236989},
}

@article {MR108824,
    AUTHOR = {Voss, K.},
     TITLE = {\"Uber geschlossene {W}eingartensche {F}l\"achen},
   JOURNAL = {Math. Ann.},
  FJOURNAL = {Mathematische Annalen},
    VOLUME = {138},
      YEAR = {1959},
     PAGES = {42--54},
      ISSN = {0025-5831,1432-1807},
   MRCLASS = {53.00},
  MRNUMBER = {108824},
       DOI = {10.1007/BF01369665},
       URL = {https://doi.org/10.1007/BF01369665},
}

@article {MR683761,
    AUTHOR = {Jorge, Luqu\'esio P. and Meeks, III, William H.},
     TITLE = {The topology of complete minimal surfaces of finite total
              {G}aussian curvature},
   JOURNAL = {Topology},
  FJOURNAL = {Topology. An International Journal of Mathematics},
    VOLUME = {22},
      YEAR = {1983},
    NUMBER = {2},
     PAGES = {203--221},
      ISSN = {0040-9383},
   MRCLASS = {53A10 (53C42)},
  MRNUMBER = {683761},
MRREVIEWER = {Chi\ Cheng\ Chen},
       DOI = {10.1016/0040-9383(83)90032-0},
       URL = {https://doi.org/10.1016/0040-9383(83)90032-0},
}

@article {MR730928,
    AUTHOR = {Schoen, Richard M.},
     TITLE = {Uniqueness, symmetry, and embeddedness of minimal surfaces},
   JOURNAL = {J. Differential Geom.},
  FJOURNAL = {Journal of Differential Geometry},
    VOLUME = {18},
      YEAR = {1983},
    NUMBER = {4},
     PAGES = {791--809},
      ISSN = {0022-040X,1945-743X},
   MRCLASS = {53A10 (58E12)},
  MRNUMBER = {730928},
MRREVIEWER = {V.\ M.\ Miklyukov},
       URL = {http://projecteuclid.org/euclid.jdg/1214438183},
}

@article {MR4417394,
    AUTHOR = {G\'alvez, Jos\'e{} A. and Mira, Pablo and Tassi, Marcos P.},
     TITLE = {A quasiconformal {H}opf soap bubble theorem},
   JOURNAL = {Calc. Var. Partial Differential Equations},
  FJOURNAL = {Calculus of Variations and Partial Differential Equations},
    VOLUME = {61},
      YEAR = {2022},
    NUMBER = {4},
     PAGES = {Paper No. 129, 20},
      ISSN = {0944-2669,1432-0835},
   MRCLASS = {53A10 (35J60 53C42)},
  MRNUMBER = {4417394},
MRREVIEWER = {Jo\~ao\ Lucas Marques Barbosa},
       DOI = {10.1007/s00526-022-02222-7},
       URL = {https://doi.org/10.1007/s00526-022-02222-7},
}

@article {MR4942311,
    AUTHOR = {Nelli, B. and Pipoli, G. and Tassi, M. P.},
     TITLE = {Special {W}eingarten surfaces with planar convex boundary},
   JOURNAL = {Commun. Contemp. Math.},
  FJOURNAL = {Communications in Contemporary Mathematics},
    VOLUME = {27},
      YEAR = {2025},
    NUMBER = {9},
     PAGES = {Paper No. 2550014, 25},
      ISSN = {0219-1997,1793-6683},
   MRCLASS = {53A10 (53C42)},
  MRNUMBER = {4942311},
       DOI = {10.1142/S0219199725500142},
       URL = {https://doi.org/10.1142/S0219199725500142},
}

@article {MR4557820,
    AUTHOR = {Fern\'andez, Isabel and Mira, Pablo},
     TITLE = {Elliptic {W}eingarten surfaces: singularities, rotational
              examples and the halfspace theorem},
   JOURNAL = {Nonlinear Anal.},
  FJOURNAL = {Nonlinear Analysis. Theory, Methods \& Applications. An
              International Multidisciplinary Journal},
    VOLUME = {232},
      YEAR = {2023},
     PAGES = {Paper No. 113244, 27},
      ISSN = {0362-546X,1873-5215},
   MRCLASS = {53A10 (35J15 35J60 53C42)},
  MRNUMBER = {4557820},
       DOI = {10.1016/j.na.2023.113244},
       URL = {https://doi.org/10.1016/j.na.2023.113244},
}

@article {MR4089078,
    AUTHOR = {L\'opez, Rafael and P\'ampano, \'Alvaro},
     TITLE = {Classification of rotational surfaces in {E}uclidean space
              satisfying a linear relation between their principal
              curvatures},
   JOURNAL = {Math. Nachr.},
  FJOURNAL = {Mathematische Nachrichten},
    VOLUME = {293},
      YEAR = {2020},
    NUMBER = {4},
     PAGES = {735--753},
      ISSN = {0025-584X,1522-2616},
   MRCLASS = {53A05},
  MRNUMBER = {4089078},
MRREVIEWER = {Paulo\ Alexandre\ Sousa},
       DOI = {10.1002/mana.201800235},
       URL = {https://doi.org/10.1002/mana.201800235},
}

@article {MR906393,
    AUTHOR = {White, Brian},
     TITLE = {Complete surfaces of finite total curvature},
   JOURNAL = {J. Differential Geom.},
  FJOURNAL = {Journal of Differential Geometry},
    VOLUME = {26},
      YEAR = {1987},
    NUMBER = {2},
     PAGES = {315--326},
      ISSN = {0022-040X,1945-743X},
   MRCLASS = {53A10 (53A07)},
  MRNUMBER = {906393},
MRREVIEWER = {R.\ Osserman},
       URL = {http://projecteuclid.org/euclid.jdg/1214441372},
}

@article {MR94452,
    AUTHOR = {Huber, Alfred},
     TITLE = {On subharmonic functions and differential geometry in the
              large},
   JOURNAL = {Comment. Math. Helv.},
  FJOURNAL = {Commentarii Mathematici Helvetici},
    VOLUME = {32},
      YEAR = {1957},
     PAGES = {13--72},
      ISSN = {0010-2571,1420-8946},
   MRCLASS = {30.00 (31.00)},
  MRNUMBER = {94452},
MRREVIEWER = {E.\ F.\ Beckenbach},
       DOI = {10.1007/BF02564570},
       URL = {https://doi.org/10.1007/BF02564570},
}

@article {MR3069540,
    AUTHOR = {v. Neumann, J.},
     TITLE = {\"Uber einen hilfssatz der variationsrechnung.},
   JOURNAL = {Abh. Math. Sem. Univ. Hamburg},
  FJOURNAL = {Abhandlungen aus dem Mathematischen Seminar der Universit\"at
              Hamburg},
    VOLUME = {8},
      YEAR = {1931},
    NUMBER = {1},
     PAGES = {28--31},
      ISSN = {0025-5858,1865-8784},
   MRCLASS = {99-04},
  MRNUMBER = {3069540},
       DOI = {10.1007/BF02940985},
       URL = {https://doi.org/10.1007/BF02940985},
}

@article{zbMATH02587131,
 author = {Rad{\'o}, T.},
 title = {Geometrische {Betrachtungen} {\"u}ber zweidimensionale regul{\"a}re {Variationsprobleme}.},
 fjournal = {Acta Litterarum ac Scientiarum. Regiae Universitatis Hungaricae Francisco-Josephinae. Sectio Scientiarum Mathematicarum},
 journal = {Acta Litt. Sci. Szeged},
 volume = {2},
 pages = {228--253},
 year = {1926},
 zbMATH = {2587131},
 JFM = {52.0507.04}
}

@article {MR2669367,
    AUTHOR = {Rosenberg, Harold and Souam, Rabah and Toubiana, Eric},
     TITLE = {General curvature estimates for stable {$H$}-surfaces in
              3-manifolds and applications},
   JOURNAL = {J. Differential Geom.},
  FJOURNAL = {Journal of Differential Geometry},
    VOLUME = {84},
      YEAR = {2010},
    NUMBER = {3},
     PAGES = {623--648},
      ISSN = {0022-040X,1945-743X},
   MRCLASS = {53A10},
  MRNUMBER = {2669367},
MRREVIEWER = {Rafael\ L\'opez},
       URL = {http://projecteuclid.org/euclid.jdg/1279114303},
}

@article {MR81416,
    AUTHOR = {Gilbarg, D. and Serrin, James},
     TITLE = {On isolated singularities of solutions of second order
              elliptic differential equations},
   JOURNAL = {J. Analyse Math.},
  FJOURNAL = {Journal d'Analyse Math\'{e}matique},
    VOLUME = {4},
      YEAR = {1955/56},
     PAGES = {309--340},
      ISSN = {0021-7670,1565-8538},
   MRCLASS = {35.0X},
  MRNUMBER = {81416},
MRREVIEWER = {R.\ Finn},
       DOI = {10.1007/BF02787726},
       URL = {https://doi.org/10.1007/BF02787726},
}

@article{MR3246039,
	TITLE        = {Boundary regularity for viscosity solutions of fully nonlinear elliptic equations},
	AUTHOR       = {Silvestre, Luis and Sirakov, Boyan},
	YEAR         = {2014},
	JOURNAL      = {Comm. Partial Differential Equations},
	VOLUME       = {39},
	NUMBER       = {9},
	PAGES        = {1694--1717},
	DOI          = {10.1080/03605302.2013.842249},
	ISSN         = {0360-5302},
	URL          = {https://doi.org/10.1080/03605302.2013.842249},
	FJOURNAL     = {Communications in Partial Differential Equations},
	MRCLASS      = {35J60 (35B65 35D40 35J25 35J67)},
	MRNUMBER     = {3246039},
	MRREVIEWER   = {Leonard Monsaingeon}
}

@misc{lian2020pointwise,
	title        = {Pointwise Regularity for Fully Nonlinear Elliptic Equations in General Forms},
	author       = {Yuanyuan Lian and Lihe Wang and Kai Zhang},
	year         = {2020},
	eprint       = {2012.00324v3},
	archivePrefix = {arXiv},
	primaryClass = {math.AP}
}

@article{MR3299174,
	TITLE        = {Monge-{A}mp\`ere equation on exterior domains},
	AUTHOR       = {Bao, Jiguang and Li, Haigang and Zhang, Lei},
	YEAR         = {2015},
	JOURNAL      = {Calc. Var. Partial Differential Equations},
	VOLUME       = {52},
	NUMBER       = {1-2},
	PAGES        = {39--63},
	DOI          = {10.1007/s00526-013-0704-7},
	ISSN         = {0944-2669},
	URL          = {https://doi.org/10.1007/s00526-013-0704-7},
	FJOURNAL     = {Calculus of Variations and Partial Differential Equations},
	MRCLASS      = {35J96 (35B40 35D40 35J60 35J67)},
	MRNUMBER     = {3299174},
	MRREVIEWER   = {David A. Hartenstine}
}

@article {MR1085145,
    AUTHOR = {L\'opez, Francisco J. and Ros, Antonio},
     TITLE = {On embedded complete minimal surfaces of genus zero},
   JOURNAL = {J. Differential Geom.},
  FJOURNAL = {Journal of Differential Geometry},
    VOLUME = {33},
      YEAR = {1991},
    NUMBER = {1},
     PAGES = {293--300},
      ISSN = {0022-040X,1945-743X},
   MRCLASS = {53A10},
  MRNUMBER = {1085145},
MRREVIEWER = {Cun-Jin\ Sun},
       URL = {http://projecteuclid.org/euclid.jdg/1214446040},
}

@article {MR5005582,
    AUTHOR = {Zhang, Kai},
     TITLE = {Interior {$C^{2,\alpha}$}regularity for fully nonlinear
              uniformly elliptic equations in dimension two},
   JOURNAL = {J. Math. Anal. Appl.},
  FJOURNAL = {Journal of Mathematical Analysis and Applications},
    VOLUME = {558},
      YEAR = {2026},
    NUMBER = {1},
     PAGES = {Paper No. 130356},
      ISSN = {0022-247X,1096-0813},
   MRCLASS = {35D40},
  MRNUMBER = {5005582},
       DOI = {10.1016/j.jmaa.2025.130356},
       URL = {https://doi.org/10.1016/j.jmaa.2025.130356},
}
\end{document}